\newcommand{\Z}{\ensuremath{{\mathbb{Z}}}\xspace}
\renewcommand{\P}{\ensuremath{{\mathbb{P}}}}
\newcommand{\Q}{\ensuremath{{\mathbb{Q}}}}
\newcommand{\ra}{\rightarrow}
\newcommand{\lra}{\longrightarrow}
\newcommand\Hom{\operatorname{Hom}}
\newcommand\Sym{\operatorname{Sym}}
\newcommand\sym{\operatorname{sym}}
\newcommand\tensor{\otimes}
\newcommand\isom{\cong}
\newcommand\tesnor{\otimes}
\newcommand\Disc{\operatorname{Disc}}
\newcommand\GL{\operatorname{GL}}
\newcommand\SL{\operatorname{SL}}
\newcommand\Spec{\operatorname{Spec}}
\newcommand\Proj{\operatorname{Proj}}
\newcommand\wt{\wedge^2}
\newcommand\OSf{\ensuremath{\mathcal{O}_\Sf}\xspace}
\newcommand\Rf{\ensuremath{R_f}\xspace}
\newcommand\If{\ensuremath{{I_f}}\xspace}
\newcommand\GZ{\ensuremath{\GL_2(\Z)}\xspace}
\newcommand\Sf{\ensuremath{{S_f}}\xspace}
\newcommand\BR{\ensuremath{B}\xspace}
\newcommand\BS{\ensuremath{S}\xspace}
\newcommand\fRf{\ensuremath{{\mathcal{R}_f}}\xspace}
\newcommand\fR[1]{\ensuremath{{\mathcal{R}_{#1}}}\xspace}
\newcommand\fIf{\ensuremath{{\mathcal{I}_f}}\xspace}
\newcommand\fI[1]{\ensuremath{{\mathcal{I}_{#1}}}\xspace}
\newcommand\Qf{\ensuremath{Q_f}\xspace}
\newcommand\ang[1]{\left\langle #1 \right\rangle}
\newcommand\OS{\ensuremath{{\mathcal{O}_\BS}}\xspace}
\newcommand\map[4]{\ensuremath{\begin{array}{ccc}#1&\lra&#2\\#3&\mapsto&#4\end{array}}}
\newcommand\bq{\begin{equation}}
\newcommand\eq{\end{equation}}
\newtheorem{proposition}{Proposition}[section]
\newtheorem{theorem}[proposition]{Theorem}
\newtheorem{corollary}[proposition]{Corollary}
\newtheorem{example}[proposition]{Example}
\newtheorem{lemma}[proposition]{Lemma}
\theoremstyle{remark}
\newtheorem{remark}[proposition]{Remark}
\newenvironment{definition}{\vspace{2 ex}{\noindent{\bf Definition. }}}{\vspace{2 ex}}
\newenvironment{notation}{\vspace{2 ex}{\noindent{\bf Notation. }}}{\vspace{2 ex}}
\title{Rings and ideals parametrized by binary $n$-ic forms}
\author{Melanie Matchett Wood\thanks{mwood@math.stanford.edu}\\
Stanford University, Department of Mathematics, \\
Building 380, Sloan Hall, \\
Stanford, California 94305}
\begin{document}

\maketitle

\begin{abstract}
The association of algebraic objects to forms has had many important applications in number theory.  Gauss, over two centuries ago, studied quadratic rings and ideals associated to binary quadratic forms, and found that ideal classes of quadratic rings are exactly parametrized by equivalence classes of integral binary quadratic forms.
Delone and Faddeev, in 1940, showed that cubic rings are parametrized by equivalence classes of integral binary cubic forms.
Birch, Merriman, Nakagawa, Corso, Dvornicich, and Simon have all studied rings
associated to binary forms of degree $n$ for any $n$, but it has not previously been known which rings, and with what additional structure, are associated to binary forms. 

In this paper, we show exactly what algebraic structures are parametrized  by binary $n$-ic forms, for all $n$.  The algebraic data associated to an integral binary $n$-ic form includes a ring isomorphic to $\Z^n$ as a $\Z$-module, an ideal class for that ring, and a condition on the ring and ideal class that comes naturally from geometry.  
In fact, we prove these parametrizations when any base scheme replaces the integers, and show that the correspondences between forms and the algebraic data are functorial in the base scheme.  We give geometric constructions
of the rings and ideals from the forms that parametrize them and a simple construction of the form from an appropriate ring and ideal.  
\end{abstract}

\section{Introduction}\label{intro}
When one looks for a parametrizing space for degree $n$ number fields, binary $n$-ic forms are a natural guess.  It turns
out that for $n=3$ this guess is correct.  We have that
$\GL_2(\Q)$ classes of binary cubic forms with rational coefficients are in bijection with isomorphism classes of cubic $\Q$-algebras and irreducible forms correspond to cubic number fields.  Moreover,
an analogous result allows the parametrization of orders in those number fields; 
 $\GL_2(\Z)$ classes of integral binary cubic forms are in bijection with isomorphism classes of cubic rings (\cite{DF}, see also \cite{DH}, \cite{GGS}, \cite{HCL3}).
For other $n$, the space of binary $n$-ic forms parametrizes algebraic data that is
more subtle than this.
It has long been known that binary quadratic forms parametrize ideal classes in quadratic rings
(originally in \cite{Gauss}, see \cite{Towber}, \cite{Kneser}, or \cite{BinQuad} for a modern treatment).
In this paper, we construct the algebraic data associated to a binary $n$-ic form, and determine what algebraic structures are in fact parametrized by binary $n$-ic forms for all $n$.

Every binary $n$-ic form with integral coefficients does have an associated ring.  The rings that come from binary $n$-ic forms are interesting for many reasons in their own right, in particular because we have several other tools to understand these rings.  Del Corso,  Dvornicich, and Simon have viewed the rings associated to binary $n$-ic forms as a generalization of monogenic rings and have described how a prime splits in a ring associated to a binary $n$-ic form in terms of the factorization of the form modulo the prime \cite{PrimeSplit}.  They have also given a condition on the form equivalent to the $p$-maximality of the associated ring.
Simon \cite{Simonobst} uses the ring associated to a binary form to find a class group obstruction equations of the form
$Cy^d=f(x,y)$ having integral solutions (where $f$ is the binary form).  
  Work of the author finds an explicit moduli space for ideal classes in the rings associated to binary $n$-ic forms \cite{2nn}.  Thus, we can work explicitly with these rings, prime splitting in them, and their ideal classes.

However, in addition to the ring that is canonically associated to a binary form, there is more associated data, including ideal classes of the ring.
Some of these ideal classes have been constructed for irreducible, primitive forms in \cite{SimonIdeal}, \cite{PrimeSplit},
and \cite{Simonobst}.
 In Section \ref{S:ZConst}, we give four different ways to construct the associated ring and ideal classes 
from a binary form
1) explicitly as a subring of a $\Q$-algebra, 
2) by giving the multiplication and action tables, 3) via a simple geometric construction that works when $f\not\equiv0$, and 
4) via a more complicated geometric construction that works in all cases.  
The geometric constructions answer a question posed by Lenstra at the Lorentz Center Rings of Low Rank Workshop in 2006 about giving
a basis-free description of the ring associated to a binary form.
 In the case $n=3$, the geometric construction of 4) was originally given in a letter of Deligne \cite{Delcubic}.
We see that for $n\ne 2$ the ring associated to a form is Gorenstein if and only if the form is primitive.  Also, the ideal classes associated to the form are invertible if and only if
the form is primitive.  The geometric construction of a ring and ideal classes from a binary form is so simple that we give it here.  

A binary $n$-ic form with integer coefficients describes a subscheme of
$\P^1_\Z$ which we call $\Sf$.  Let $\mathcal{O}(k)$ denote
the usual sheaf on $\P^1_\Z$ and let
$\mathcal{O}_\Sf(k)$ denote its pullback to \Sf.
Also, for a sheaf $\mathcal{F}$, let
$\Gamma(\mathcal{F})$ be the global sections of $\mathcal{F}$.
When $f\not\equiv 0$, the ring associated to the binary $n$-ic form $f$ is simply the ring of global functions of $\Sf$.
The global sections 
$\Gamma(\mathcal{O}_{\Sf}(k))$ have an $\Gamma(\mathcal{O}_{\Sf})$-module structure,
and for a binary form $f\not\equiv 0$
and
$-1\leq k\leq n-1$, the global sections
$\Gamma(\mathcal{O}_{\Sf}(k))$ give a module of the ring associated to $f$ which is realizable as an ideal class.
When $n=2$, taking $k=1$ we obtain the ideal associated to the binary quadratic form in Gauss composition.
(This construction gives an ideal even when $f$ is reducible or non-primitive.  See \cite{BinQuad} for a complete description of the
$n=2$ case.)  When $n=3$, 
we expect to obtain canonical modules for the ring since we know binary cubic forms parametrize exactly cubic rings.
When $n=3$, by taking $k=1$ we obtain the inverse different of the ring associated to the binary cubic form, and in general
taking $k=n-2$ gives the inverse different (see Theorem~\ref{T:invdif}).
Thus from a binary form, we naturally construct a ring and several ideal classes.  As we are interested in understanding exactly what data is parametrized by binary forms, the natural questions remaining are: is there more data naturally associated to the form?  is some of the data we have already constructed redundant, in other words could it be constructed from other pieces of the data?  and what rings and ideal classes actual arise from this construction?

First, we will see that there is more important structure to the ring and ideal classes we have constructed.
Given a form $f$, let $R$ be the associated ring, and $I$ the ideal from $k=n-3$. 
From the exact sequences on $\P^1_\Z$
$$
0\ra \mathcal{O}(-n) \stackrel{f}{\ra} \mathcal{O} \ra \mathcal{O}/f(\mathcal{O}(-n)) \ra 0$$
and
$$
0\ra \mathcal{O}(-3) \stackrel{f}{\ra} \mathcal{O}(n-3) \ra \mathcal{O}(n-3)/f(\mathcal{O}(-3)) \ra 0
$$
we obtain exact sequences
$$
0\ra \Z \ra R \ra H^1(\P^1_\Z,\mathcal{O}(-n)) \ra 0 $$
and
$$
0\ra H^0(\P^1_\Z,\mathcal{O}(n-3)) \ra I \ra H^1(\P^1_\Z,\mathcal{O}(-3)) \ra 0.
$$
We have a map $R\tensor I \ra I$ from the action of the ring on the ideal, and thus
a map $\phi: R/\Z \tesnor H^0(\P^1_\Z,\mathcal{O}(n-3)) \ra H^1(\P^1_\Z,\mathcal{O}(-3))$.  It is easy to see that
with the identification of $R/\Z$ with $H^1(\P^1_\Z,\mathcal{O}(-n))$, that $\phi$ is the same as the natural map
$$
H^1(\P^1_\Z,\mathcal{O}(-n)) \tesnor H^0(\P^1_\Z,\mathcal{O}(n-3)) \ra H^1(\P^1_\Z,\mathcal{O}(-3)).
$$
Note if we write $V=\Z^2$, then we have $H^1(\P^1_\Z,\mathcal{O}(-n)))=\Sym_{n-2} V^*$, and
$H^0(\P^1_\Z,\mathcal{O}(n-3))=\Sym^{n-3} V$, and $H^1(\P^1_\Z,\mathcal{O}(-3))=V^*$.

In Section \ref{S:TwistedMain}, we prove that the above algebraic data is precisely the data parametrized by binary $n$-ic forms.
\begin{theorem}
 Given a ring $R$ and an $R$-module $I$, we have that $R$ and $I$
are associated to a binary $n$-ic form if and only if we can write $R/\Z = \Sym_{n-2} V^*$
and an exact sequence $0\ra \Sym^{n-3} V \ra I \ra V^* \ra 0$ such that the map
$\Sym_{n-2} V^* \tesnor \Sym^{n-3} V  \ra V^*$ given by the action of $R$ on $I$ is the same as the natural map
between those $\Z$-modules.
It is equivalent to require that $R$ have a $\Z$-module basis $\zeta_0=1,\zeta_1,\dots,\zeta_{n-1}$ and
and $I$ have a $\Z$-module basis $\alpha_1,\alpha_2,\beta_1,\dots,\beta_{n-2}$ such that
$$
\text{the $\alpha_i$ coefficient of $\zeta_j\beta_k$ is }  \begin{cases} 
1 &\text{if $i + j +k = n+1$ }\\
0 &\text{otherwise.}
\end{cases}
$$
\end{theorem}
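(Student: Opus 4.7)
The plan is to prove the theorem by verifying both directions of the ``if and only if,'' and by establishing the equivalence of the two formulations (exact-sequence form and basis form). The forward direction follows almost immediately from the geometric construction of Section~\ref{S:ZConst} applied to the ideal $I$ coming from $k=n-3$, while the reverse direction requires reconstructing a form from the abstract structural data.

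For the forward direction, suppose $f$ is a binary $n$-ic form, let $R=\Rf$ be the associated ring, and let $I$ be the ideal associated to $k=n-3$. The two cohomology exact sequences displayed just before the theorem statement produce the required short exact sequences for $R$ and $I$, after the standard identifications $H^1(\P^1_\Z,\mathcal{O}(-n))\cong\Sym_{n-2}V^*$, $H^0(\P^1_\Z,\mathcal{O}(n-3))\cong\Sym^{n-3}V$, and $H^1(\P^1_\Z,\mathcal{O}(-3))\cong V^*$. The assertion that the induced action map on the quotients agrees with the natural cohomological pairing follows from the compatibility of the module structure with restriction of sections and cup product, as already sketched in the paragraphs preceding the theorem.

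For the equivalence of the two formulations, I fix a basis $x,y$ of $V$. Then $\Sym_{n-2}V^*$ has a canonical basis dual to the monomials $x^{n-2-i}y^i$ with $0\leq i\leq n-2$, and analogously for $\Sym^{n-3}V$ and $V^*$. In these coordinates the natural contraction map $\Sym_{n-2}V^*\tensor\Sym^{n-3}V\ra V^*$ takes exactly the combinatorial form asserted in the theorem. Lifting these bases to $R$ and $I$ along the exact sequences produces elements $\zeta_1,\ldots,\zeta_{n-1}$ in $R$, lifts $\alpha_1,\alpha_2$ of the basis of $V^*$ inside $I$, and elements $\beta_1,\ldots,\beta_{n-2}$ coming from $\Sym^{n-3}V$. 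The stated product rule for $\zeta_j\beta_k$ in terms of the $\alpha_i$ is then just a transcription of the natural pairing in these coordinates.

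For the reverse direction, given a pair $(R,I)$ equipped with the stated data, I build a form $f$ by reading off the structure constants of the multiplication on $R$ in the basis $\zeta_0=1,\zeta_1,\dots,\zeta_{n-1}$. The products $\zeta_i\zeta_j$ are constrained by the requirement that they act associatively on the $\beta_k$, and the normalization imposed by the stated action coefficients forces these products to take a very particular shape, whose coefficients assemble into a single binary $n$-ic form $f$. One then verifies that applying the geometric construction of Section~\ref{S:ZConst} to this $f$ recovers $(R,I)$ together with its structural data. The main obstacle is this last step: one must show that the ideal-action information, combined with the exact sequence for $R$, already suffices to pin down every product $\zeta_i\zeta_j$. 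This requires exploiting associativity together with the fact that the $\beta_k$ generate $I$ as an $R$-module, in order to transfer the combinatorics of the action on $I$ back into the multiplication on $R$ itself.
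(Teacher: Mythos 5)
Your forward direction and your treatment of the equivalence of the two formulations are correct and follow the same route as the paper: the exact sequences come from the long exact sequence of cohomology (Equation~\eqref{E:Iexactl} together with Proposition~\ref{P:rightaction}), and the basis formulation is the coordinate transcription of the contraction map $\Sym_{n-2}V^*\tensor\Sym^{n-3}V\ra V^*$, exactly as in Proposition~\ref{twistedzeroesandones}.

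The reverse direction, however, contains a genuine gap --- the very step you flag as ``the main obstacle'' is the technical heart of the proof, and you do not carry it out. Two things are missing. First, ``reading off the structure constants of the multiplication on $R$'' is not well-posed until the lifts are normalized: the $\zeta_i$ are only determined modulo $\OS\cdot 1$ and the lifts of the basis of $V^*$ to $I$ only modulo $\ker(I\ra V^*)$, and the structure constants change under these choices (moreover $R$ alone does not determine $f$ for $n\geq 4$, so the form must be read off the action of $R$ on $I\ra Q$, not off the multiplication in $R$). The paper devotes the ``based binary pairs'' subsection to fixing these lifts by imposing $\dot{x}(\zeta_i x)=0$, $\dot{y}(\zeta_i x)=0$, $\dot{y}(\zeta_i y)=0$ for the relevant $i$, after which only $n+1$ coefficients $a_0,\dots,a_n$ survive in the maps $\zeta_i:I\ra Q$, and these are exactly the coefficients of the form (Proposition~\ref{P:formfrompair}); independence of the construction from the splittings is itself a lemma (Lemma~\ref{L:zwd}) resting on the exactness condition of Proposition~\ref{P:exactsequencecond}. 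Second, and more seriously, the assertion that the action data ``pins down every product $\zeta_i\zeta_j$'' is precisely Proposition~\ref{P:determined}, and it is not a formality: the argument is an induction showing that all entries of the action matrices $Z_i$ are determined by their last two rows, extracted row by row from the commutation relations $Z_iZ_\ell=Z_\ell Z_i$ (Lemmas~\ref{xstep} and \ref{ystep}), after which the multiplication table of $R$ is recovered from the faithful embedding of $R$ into matrices acting on $I$. Without this argument you have not shown injectivity of the passage from pairs to forms, so the ``only if'' half of the bijection remains unproven.
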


The equivalence can be computed by  working out the natural map $\Sym_{n-2} V^* \tesnor \Sym^{n-3} V  \ra V^*$ in terms
of an explicit basis.
It is easy to see that when $n=3$ this condition requires that $I$ is isomorphic to $R$ as an $R$-module.
So we see here that only one of the ideal classes constructed is really new data, since the binary form, and thus all its associated ideal classes,
can be recovered from $R$, $I$, and the exact sequence above.

All of the work in the paper can be done with an arbitrary base scheme (or ring) replacing $\Z$ in the above, and we now state
a precise theorem capturing the above claims over an arbitrary base.
Let $S$ be a scheme, and $\OS$ its sheaf of regular functions.
A \emph{binary $n$-ic form} over $S$ is a locally free rank $2$ $\OS$-module $V$, and an element $f\in \Sym^n V$.
An $l$-twisted binary $n$-ic form over $S$ is a locally free rank $2$ $\OS$-module $V$, and an element $f\in \Sym^n V\tesnor
(\wedge^2 V)^{\tesnor l}$.
A \emph{binary $n$-pair} is an $\OS$-algebra $R$, an $R$-module $I$, an
exact sequence  $0\ra \Sym^{n-3} Q^* \ra I \ra Q \ra 0$ such that $Q$ is a locally free rank $2$ $\OS$-module, and
an isomorphism $R/\OS\isom \Sym_{n-2} Q$ that identifies the map
$R/\OS \tesnor  \Sym^{n-3} Q^* \ra Q$ induced from the action of $R$ on $I$  with the natural map
$\Sym_{n-2} Q \tesnor \Sym^{n-3} Q^*  \ra Q$. 
In Section~\ref{S:SConst}, we give a geometric construction of rings and modules from (twisted) binary $n$-ic forms over a scheme $S$, motivated
by the geometric description given above over $\Z$.
Our main theorem is the following, proved in Section~\ref{S:TwistedMain}.
\begin{theorem}\label{T:TwistedMain}
For $n\geq 3$, we have a bijection between $(-1)$-twisted binary $n$-ic forms over $S$ and binary $n$-pairs over $S$, and the bijection commutes with base change in S.  In other words, we have a isomorphism of the moduli stack of $(-1)$-twisted binary $n$-ic forms and the moduli stack of binary $n$-pairs.
\end{theorem}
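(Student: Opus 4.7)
The plan is to exhibit mutually inverse functors in both directions and to verify that each is compatible with base change, which then upgrades the bijection to the claimed isomorphism of moduli stacks. The forward functor (form to pair) is the geometric construction from Section~\ref{S:SConst}. The backward functor (pair to form) is built Zariski-locally on $S$ by reading coefficients off the multiplication in a compatible basis. Because both constructions are defined in natural, coordinate-free terms on the geometric side and from pulled-back local data on the algebraic side, base-change compatibility reduces to checking the bijection after trivializing $V$ on an affine open, which in turn reduces to a calculation over $\OS$ identical in form to the one in the $\Z$-case.

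In the forward direction, a $(-1)$-twisted form $f\in\Sym^n V\otimes(\wedge^2 V)^{\otimes -1}$ cuts out $\Sf\subset\P(V)$, and Section~\ref{S:SConst} produces $R=\pi_*\mathcal{O}_{\Sf}$ together with $I$ as a pushforward of $\mathcal{O}_{\Sf}(n-3)$ (with the appropriate $\wedge^2 V$ twist). The Koszul sequences for multiplication by $f$ on $\P(V)$ give
\[
0\to\OS\to R\to R^1\pi_*\mathcal{O}(-n)\to 0, \qquad 0\to\pi_*\mathcal{O}(n-3)\to I\to R^1\pi_*\mathcal{O}(-3)\to 0,
\]
again suitably twisted. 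Identifying these higher direct images with symmetric powers of $V$ and $V^*$ and setting $Q=V^*$ gives all the data of a binary $n$-pair. The compatibility of the multiplication map $\Sym_{n-2}Q\otimes\Sym^{n-3}Q^*\to Q$ with the natural pairing is a statement about the cup product on $\P(V)$ and is checked on $\P^1$ after trivializing $V$.

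In the backward direction, I work locally where $V:=Q^*$ has a basis $x,y$. The isomorphism $R/\OS\isom\Sym_{n-2}Q$ and the exact sequence $0\to\Sym^{n-3}Q^*\to I\to Q\to 0$ furnish a basis $1=\zeta_0,\zeta_1,\dots,\zeta_{n-1}$ of $R$ and a basis $\alpha_1,\alpha_2,\beta_1,\dots,\beta_{n-2}$ of $I$ as in the theorem stated in the introduction. The compatibility condition pins down the $\alpha$-coefficients of each product $\zeta_j\beta_k$, while the $\beta$-coefficients supply $n+1$ scalars that I assemble into the coefficients $f_0,\dots,f_n$ of a candidate form. That the two functors are mutually inverse is then checked in these bases: the form-to-pair computation of Section~\ref{S:SConst} in the standard trivialization produces exactly the multiplication table just written down, and the pair-to-form map applied to it returns the original $f$ tautologically.

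The main obstacle is to show that the backward construction actually produces a single well-defined form, rather than a redundant collection of scalars, and that local forms glue globally. Consistency comes from associativity: comparing $\beta$-coefficients in $\zeta_j(\zeta_{j'}\beta_k)=\zeta_{j'}(\zeta_j\beta_k)$ and using the already-determined $\alpha$-entries forces each extracted scalar to depend only on the sum $j+k$, so they genuinely form the coefficients of a binary $n$-ic form. Basis-independence comes from the $\GL(V)$-equivariance of the isomorphism $R/\OS\isom\Sym_{n-2}Q$ and of the compatibility pairing; this produces a well-defined global section of $\Sym^n V\otimes(\wedge^2 V)^{\otimes -1}$ and lets the local forms glue. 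Verifying this equivariance, and in particular that the $(-1)$-twist is exactly what is needed to absorb the Jacobian of a basis change, is the heart of the argument.
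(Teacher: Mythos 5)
Your overall architecture (a geometric functor from forms to pairs, a local coefficient-extraction from pairs back to forms, verification of mutual inverses in a trivializing basis, gluing by equivariance) matches the paper's strategy, but two essential technical steps are missing, and they are exactly where the real work lies.

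First, the backward map is not well-defined as you describe it. A binary pair hands you $R/\OS\isom\Sym_{n-2}Q$ and $I$ only as an extension of $Q$ by $\Sym^{n-3}Q^*$; to speak of ``the $\beta$-coefficients of $\zeta_j\beta_k$'' you must first lift the $\zeta_j$ from $R/\OS$ to $R$ and split $I\to Q$, and those coefficients change when you change either choice (replacing a lift $\zeta_j$ by $\zeta_j+c$ alters $\zeta_j\beta_k$ by $c\beta_k$, and changing the splitting changes the decomposition of $\zeta_j\beta_k$ into its $\alpha$- and $\beta$-parts). Associativity cannot repair this, since it compares products computed with one fixed set of lifts. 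The paper deals with this either by explicitly normalizing the lifts (the conditions $\dot x(\zeta_i x)=\dot y(\zeta_i x)=\dot y(\zeta_i y)=0$ in the based-pair section, which pin down unique lifts and leave exactly $n+1$ free scalars --- read off, note, from the $Q$-components of $\zeta_i$ acting on the lifted $x,y$, not from the kernel components of $\zeta_j\beta_k$), or intrinsically by the degree-$n$ map $x\mapsto x\wedge\phi(x^{n-2})x$ of Lemma~\ref{L:zwd}, whose independence of the splittings is precisely what Lemma~\ref{L:zwd2} proves using the exactness condition of Proposition~\ref{P:exactsequencecond}. You need one of these devices; the intrinsic one also disposes of your gluing and $\GL(V)$-equivariance worries for free.

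Second, to conclude that pair $\to$ form $\to$ pair is the identity you must show that the $n+1$ extracted scalars determine the \emph{entire} multiplication table of $R$ and action table of $R$ on $I$, not merely that the scalars are consistently extracted. Your associativity argument is aimed at the weaker statement that the extracted scalar depends only on $j+k$; it does not rule out two non-isomorphic pairs producing the same form. This is the content of Proposition~\ref{P:determined}, whose proof is an induction propagating the known last two rows of the matrices $Z_i$ (the maps $\zeta_i:I\to Q$) to all rows via the commutativity relations $Z_iZ_\ell=Z_\ell Z_i$, and then recovering the products $\zeta_i\zeta_j$ from the faithfulness of the action of $R$ on $I$. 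Without this step, or an equivalent, the claimed bijection is not established.
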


Analogs of Theorem~\ref{T:TwistedMain} can be proven for $l$-twisted binary forms for all $l$.  We have already given the idea of a geometric construction for one direction of the bijection in Theorem~\ref{T:TwistedMain} (see Section~\ref{S:SConst} for the details), and we now give a simple construction of the other direction of the bijection.
We can write the construction of a $(-1)$-twisted binary $n$-ic form from a 
binary $n$-pair as the evaluation
$$x \mapsto x\wedge \phi(x^{n-2})x$$
of he above degree $n$ map $Q \ra \wt Q$, where
$\phi$ is the isomorphism $\Sym_{n-2} Q\isom R/\OS$ and we lift 
$x$ to the ideal $I$ to act on it by $R$ and then take the quotient to $Q$.
It is not clear a priori that this map is even well-defined, but that will follow
from the definition of a binary $n$-pair (Lemma~\ref{L:zwd}).

\section{Constructing a ring and modules from a binary $n$-ic form over \Z}\label{S:ZConst}
\subsection{Concrete construction}\label{S:RI}
In this section we will explicitly realize the ring and ideals associated to a binary $n$-ic form
inside a $\Q$-algebra.
Given  a \emph{binary $n$-ic form}, 
$$
f_0x^n +f_1 x^{n-1}y +\dots +f_ny^n \quad \textrm{with}\quad f_i\in\Z,
$$
such that $f_0\ne 0$, we can form a ring
$\Rf$ as a subring of $\Qf:=\Q(\theta)/(f_0\theta^n +f_1 \theta^{n-1} +\dots +f_n)$ 
with $\Z$-module basis
 \begin{gather}\label{E:basis}
 \zeta_0=1\\\notag
 \zeta_1=f_0\theta\\\notag
 \zeta_2=f_0\theta^2+f_1\theta\\\notag
 \vdots\\\notag
 \zeta_k=f_0\theta^k+\dots+f_{k-1}\theta\\\notag
 \vdots\\\notag
 \zeta_{n-1}=f_0\theta^{n-1}+\dots+f_{n-2}\theta.
  \end{gather}
Since $f_0\ne 0$, we have that $\Rf$ is a free rank $n$ \Z-module, i.e. \emph{a rank $n$ ring} in the terminology of Bhargava \cite{HCL3}.
Birch and Merriman \cite{BM} studied 
 this $\Z$-submodule of  $\Qf$,
and Nakagawa \cite[Proposition 1.1]{Naka} proved that this  $\Z$-submodule is a ring
(though Nakagawa worked only with irreducible $f$, his proof makes sense for all $f$).  
 Nakagawa writes down the multiplication table of $\Rf$ explicitly as follows:
\begin{equation}\label{E:mult}
\zeta_i\zeta_j=-\sum_{\max(i+j-n,1)\leq k\leq i} f_{i+j-k}\zeta_k
+\sum_{j< k\leq \min(i+j,n)} f_{i+j-k}\zeta_k
\quad \textrm{for } 1\leq i,j\leq n-1,
\end{equation}
where $\zeta_n:=-f_n$.
If $f_0=0$, we could still use the above multiplication table to define a rank $n$ ring (see Section~\ref{S:tables}).
 We have the discriminant equality 
$\Disc\Rf=\Disc f$
(see, for example, \cite[Proposition 4]{Simon}),
which is a point of interest in $\Rf$ in previous works (e.g. \cite{Naka}, \cite{Simon}).  

\begin{remark}
Throughout this paper, it will be useful to also make the above construction with $\Z$ replaced by
$\Z[f_0,\dots f_n]$, where the $f_i$ are formal variables, and with  $f=f_0x^n + \dots +f_ny^n$, which we call the \emph{universal form.}
If $K$ is the fraction field of $\Z[f_0,\dots f_n]$, we can then work in
$K(\theta)/(f_0\theta^n +f_1 \theta^{n-1} +\dots +f_n)$ instead of 
$\Q(\theta)/(f_0\theta^n +f_1 \theta^{n-1} +\dots +f_n)$.  The multiplication table in
Equation~\eqref{E:mult} still holds, as Nakagawa's proof can also be interpreted in this context.
\end{remark}

   When $f_0\ne 0$, we can also form
 a fractional ideal $\If=(1,\theta)$ of \Rf (lying in $ \Qf$).
There is a natural $\GL_2(\Z)$ action on binary forms, and the ring
 $\Rf$ and the ideal class of $\If$ are invariant under this action 
 The invariance will follow from our geometric construction of this ideal in Section~\ref{S:ZGeomConst}.
 (See also \cite[Proposition 1.2]{Naka} for a direct proof of the invariance of $\Rf$, and \cite[Th\'eor\`eme 3.4]{Simon} 
 which, in the case when $f$ is irreducible and primitive,
  considers a sequence of ideals $\mathfrak{J}_j$, all in the ideal class of 
 $\If$, and
 proves that this ideal class is $\SL_2$
 invariant.)
   The powers of \If give a sequence of ideals $\If^0,\If^1,\dots,\If^{n-1},\dots$
 whose classes are each \GZ invariant.
  We can write down the following explicit $\Z$-module bases for $\If^k$ for $0\leq k\leq n -1$:
 \begin{align}\label{E:Ibasis}
 \If^k =\langle
  1,\theta,\dots,\theta^k,\zeta_{k+1},\dots,\zeta_{n-1}\rangle_\Z,
 \end{align}
 where $\langle s_1,\dots, s_n \rangle_R$ denotes the 
 $R$-module generated by $s_1,\dots,s_n$.
 Equivalently to Equation~\eqref{E:Ibasis}, we have for $0\leq k\leq n -1$
 \begin{equation}\label{E:basis2}
   \If^k=\langle
  1,\theta,\dots,\theta^k,f_0\theta^{k+1},f_0\theta^{k+2}+f_1\theta^{k+1},\dots,
 f_0\theta^{n-1}+f_1\theta^{n-2}+\dots+f_{n-k-2}\theta^{k+1}\rangle_\Z
 .
\end{equation}
 To be clear, we give the boundary cases explicitly:
 \begin{gather*}
 \If^{n-2}=\langle 1,\theta,\dots,\theta^{n-2},f_0\theta^{n-1}\rangle_\Z\\
 \If^{n-1}=\langle 1,\theta,\dots,\theta^{n-1}\rangle_\Z.
 \end{gather*}
  Proposition~\ref{P:Ibasis} in the Appendix (Section~\ref{S:App1}) shows
  that the $\Z$-modules given above are equal to the ideals
  we claim.  Clearly, the given $\Z$-modules are subsets of the respective ideals
 and contain the ideal generators, and so
 it only remains to check that the given $\Z$-modules are actually ideals.

 If we look at the $\Z$ bases of $\If^2$, $\If^1$, and $\If^0$ given in Equation~\eqref{E:basis2}, they naturally
 lead to considering another $\Z$-module (given by Equation~\eqref{E:basis2} when $k=-1$)
\begin{equation}\label{E:Isbasis}
 \If^\#=\langle f_0,f_0\theta+f_1,\dots,
 f_0\theta^{n-1}+f_1\theta^{n-2}+\dots+f_{n-1}\rangle_\Z.
\end{equation}
It turns out that $\If^\#$ is an ideal of $\Rf$, which is shown in  
 Proposition~\ref{P:Isbasis} in the Appendix (Section~\ref{S:App1}).
This ideal is studied in the case of $f$ irreducible and primitive 
as $\mathfrak{b}$ in \cite{Simon} and \cite{Simonobst}
and as $\mathfrak{B}$ in \cite{PrimeSplit}.

\begin{remark}
 Similarly, we can form the fractional ideals $I_f^k$ and $I_f^\#$  over the base ring
$\Z[f_0,\dots f_n]$ and with $f=f_0x^n + +\dots +f_ny^n$, working in
$K(\theta)/(f_0\theta^n +f_1 \theta^{n-1} +\dots +f_n)$.  
The ideals have $\Z[f_0,\dots f_n]$-module bases
as given in
Equations~\eqref{E:Ibasis}, \eqref{E:basis2}, and \eqref{E:Isbasis}, and these
$\Z[f_0,\dots f_n]$-modules are $R$ ideals by the same proofs as in the $\Z$ case.
\end{remark}

Given the sequence
$\If^2,\If^1,\If^0$ that led us to define $\If^\#$ one might expect that
$\If^\#$ is the same as $\If^{-1}$. 
However, it turns out that \If is not always invertible.
We do have the following proposition
(proven in Proposition~\ref{P:invertextra} of the Appendix (Section~\ref{S:App1})).  A 
form $f$ is \emph{primitive} if its coefficients generate the unit ideal in $\Z$.
\begin{proposition}\label{P:Iinvert}
For $f\not\equiv 0$, the ideal class of $\If$ is invertible if and only if the form $f$ is primitive.
Also, the ideal class of $\If^\#$ is invertible if and only if the form $f$ is primitive.
In the
case that $f$ is primitive, $\If^{-1}=\If^\#$.
\end{proposition}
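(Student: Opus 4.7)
The plan is to compute $\If\cdot\If^\#$ explicitly and show it equals $\Rf$ precisely when $f$ is primitive. Writing $\eta_k := f_0\theta^k+f_1\theta^{k-1}+\dots+f_k$ for the $\Z$-basis of $\If^\#$ in \eqref{E:Isbasis}, the identities $\theta\eta_k = \eta_{k+1}-f_{k+1}$ for $0\le k\le n-2$ and $\theta\eta_{n-1}=-f_n$ (the latter obtained from $f_0\theta^n+\dots+f_n = 0$) show at once that $\theta\cdot\If^\#\subseteq\Rf$, hence $\If\cdot\If^\#\subseteq\Rf$, and that every coefficient $f_i$ lies in $\If\cdot\If^\#$: indeed $f_0=\eta_0\in\If^\#$, $f_{k+1} = \eta_{k+1}-\theta\eta_k$ for $0\le k\le n-2$, and $f_n=-\theta\eta_{n-1}$.

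If $f$ is primitive, some $\Z$-linear combination of the $f_i$ equals $1$, so $1\in\If\cdot\If^\#$; combined with the reverse inclusion this forces $\If\cdot\If^\# = \Rf$. This one identity simultaneously proves that both $\If$ and $\If^\#$ are invertible and that $\If^{-1}=\If^\#$ in the primitive case.

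For the converse, let $p$ be a prime dividing $\gcd(f_i)$. The multiplication formula \eqref{E:mult} gives $\zeta_i\zeta_j\equiv 0\pmod p$ for $i,j\ge 1$, so $\Rf/p\Rf$ is a local ring with square-zero maximal ideal $\mathfrak{m} = (\bar\zeta_1,\dots,\bar\zeta_{n-1})$, corresponding to a unique prime $\mathfrak{p}=(p,\zeta_1,\dots,\zeta_{n-1})$ of $\Rf$ above $p$. Because an invertible ideal is locally principal, it suffices to show that both fibers $\If/\mathfrak{p}\If$ and $\If^\#/\mathfrak{p}\If^\#$ have $\F_p$-dimension at least $2$. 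For $\If$ in its $\Z$-basis $1,\theta,\zeta_2,\dots,\zeta_{n-1}$, the congruences $\zeta_1=f_0\theta\equiv 0$, $\zeta_k\theta = \zeta_{k+1}-f_k\theta\equiv\bar\zeta_{k+1}$, and $\zeta_i\zeta_j\equiv 0$ (all mod $p$) give $\mathfrak{p}\If/p\If = \F_p\langle\bar\zeta_2,\dots,\bar\zeta_{n-1}\rangle$, leaving $\bar 1,\bar\theta$ as two independent classes in the fiber. For $\If^\#$ the plan is to show $\mathfrak{m}\cdot(\If^\#/p\If^\#) = 0$, i.e.\ $\zeta_i\eta_k\in p\If^\#$ for every $i$ and $k$, which makes $\If^\#/\mathfrak{p}\If^\# = \If^\#/p\If^\#$ the full $n$-dimensional space.

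The principal technical obstacle is the containment $\zeta_i\eta_k\in p\If^\#$. Using $\eta_k=\zeta_k+f_k$ (and $\eta_0=f_0$), the decomposition $\zeta_i\eta_k = \zeta_i\zeta_k+f_k\zeta_i$ shows via \eqref{E:mult} that every coefficient in the $\zeta$-basis of $\Rf$ is divisible by $p$. Converting to the $\eta$-basis of $\If^\#$ via $\zeta_j = \eta_j-(f_j/f_0)\eta_0$ for $j\ge 1$ and $\zeta_0=\eta_0/f_0$ introduces a denominator $f_0$ in the $\eta_0$-component, so one must verify that the numerator $b_0-\sum_{j\ge 1}b_j f_j$ arising from $\zeta_i\zeta_k+f_k\zeta_i$ is divisible by $f_0 p$ rather than merely $f_0$. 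The extra factor of $p$ comes from the form of the $\zeta_i\zeta_k$ coefficients (each a sum of products $f_a f_b$, divisible by $p^2$) together with the cancellations forced by $\sum f_i\theta^{n-i}=0$ during degree reduction; with this bookkeeping in place, both parts of the proposition follow at once.
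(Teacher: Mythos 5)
Your overall strategy is legitimate and genuinely different from the paper's: the paper computes $\If\If^\#=\langle f_0,\dots,f_n,\zeta_1,\dots,\zeta_{n-1}\rangle_\Z$ (same as your first step), but then handles the non-primitive direction by showing $\If^\#=(\Rf:\If)$ (so invertibility of $\If$ would force $\If\If^\#=\Rf$) and by a norm/index argument for $\If^\#$ (using $[\If:\Rf]=[\Rf:\If^\#]=|f_0|$), whereas you localize at the unique prime $\mathfrak{p}=(p,\zeta_1,\dots,\zeta_{n-1})$ above a prime $p\mid\gcd(f_i)$ and show the fibers are too big to be locally principal. Your primitive direction and your treatment of $\If$ are complete and correct: the computation $\mathfrak{p}\If\subseteq p\If+\Z\zeta_2+\dots+\Z\zeta_{n-1}$ does show $\dim_{\F_p}\If/\mathfrak{p}\If\geq 2$.

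The gap is in the $\If^\#$ half, which you yourself flag as the ``principal technical obstacle'': the containment $\zeta_i\eta_k\in p\If^\#$ is never actually proved, and the divisibility argument you sketch does not work as stated. You argue that the numerator $b_0-\sum_{j\geq 1}b_jf_j$ is divisible by $p^2$ (each term being a product of two coefficients of $f$) and by $f_0$ (integrality of the $\eta_0$-coordinate), and hope to conclude divisibility by $pf_0$; but divisibility by $p^2$ and by $f_0$ does not imply divisibility by $pf_0$ when $p^2\mid f_0$ (e.g. $f_0=p^2$), so the ``bookkeeping'' you defer to is exactly the missing content. The claim itself is true, and there is a clean way to finish: work with the universal form over $B=\Z[f_0,\dots,f_n]$, where $\If^\#$ is a free $B$-module on the $\eta_k$ and an ideal of $\fRf$ (as the paper notes). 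Grading $B[\theta]/(f(\theta))$ by total degree in the $f_j$ (with $\deg\theta=0$), the element $\zeta_i\eta_k$ is homogeneous of degree $2$ and each $\eta_m$ is homogeneous of degree $1$, so each coefficient in $\zeta_i\eta_k=\sum_m c_m\eta_m$ is a $\Z$-linear form in $f_0,\dots,f_n$; specializing, $p\mid\gcd(f_i)$ gives $p\mid c_m$, which is what you need. Alternatively, the paper's two-line proof that $\If^\#=(\Rf:\If)$ plus the index computation $[\Rf:\If^\#]=|f_0|=[\If:\Rf]$ gives the non-primitive direction for both ideals with much less computation.
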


When $f$ is primitive, Simon \cite[Proposition 3.2]{SimonIdeal} proved that
the ideal classes of what we call $\If$ and $\If^\#$ are inverses.  
Of course, for any $k>0$, we have $\If^k$ is invertible if and only if
$\If$ is.  Some of the ideal classes $\If^k$ are particularly interesting.
For example, we have the following result which we prove in Corollary~\ref{C:JInvdif}.
\begin{theorem}\label{T:invdif}
The class of $\If^{n-2}$ is the class of the inverse different of $\Rf$.
In other words, as $\Rf$ modules, $\If^{n-2}\isom \Hom_\Z (\Rf,\Z)$.
\end{theorem}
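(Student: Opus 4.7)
The plan is to identify $\If^{n-2}$ with the dualizing module of $\Rf$ over $\Z$, which is canonically isomorphic to $\Hom_\Z(\Rf,\Z)$ as $\Rf$-modules, and to do so by passing through the geometric construction of the ideal classes.

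First, I would use the geometric construction sketched in the introduction to identify $\If^{n-2}$ with $\Gamma(\Sf,\mathcal{O}_\Sf(n-2))$ as $\Rf$-modules. The explicit $\Z$-basis in Equation~\eqref{E:basis2} (with $k=n-2$) matches what one computes for the global sections of $\mathcal{O}_\Sf(n-2)$ by pushing forward the twisted Koszul resolution
$$0\to\mathcal{O}_{\P^1_\Z}(-2)\xrightarrow{f}\mathcal{O}_{\P^1_\Z}(n-2)\to\mathcal{O}_\Sf(n-2)\to 0,$$
so this identification is essentially a direct check on bases, and the $\Rf$-module structures agree because multiplication on $\Gamma(\mathcal{O}_\Sf(n-2))$ by $\Gamma(\mathcal{O}_\Sf)=\Rf$ is compatible with multiplication inside $\Qf$.

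Next, since $\Sf\subset\P^1_\Z$ is the Cartier divisor cut out by the section $f\in\Gamma(\P^1_\Z,\mathcal{O}(n))$, the adjunction formula yields
$$\omega_{\Sf/\Z}\;\cong\;\omega_{\P^1_\Z/\Z}|_\Sf\otimes\mathcal{O}_{\P^1_\Z}(n)|_\Sf\;\cong\;\mathcal{O}_\Sf(-2)\otimes\mathcal{O}_\Sf(n)\;=\;\mathcal{O}_\Sf(n-2).$$
When $f_0\neq 0$, the morphism $\Sf\to\Spec\Z$ is finite and flat of degree $n$, and Grothendieck duality for this morphism identifies $\Gamma(\Sf,\omega_{\Sf/\Z})$ with $\Hom_\Z(\Rf,\Z)$ as $\Rf$-modules. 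Composing with the isomorphism from the first step produces the desired $\Rf$-module isomorphism $\If^{n-2}\cong\Hom_\Z(\Rf,\Z)$.

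The main obstacle is handling degeneracies: when $f$ has repeated factors $\Sf$ is non-reduced, and when $f_0=0$ the scheme-theoretic picture at the point at infinity of $\P^1_\Z$ becomes subtle, so one must be careful that adjunction and Grothendieck duality still deliver the correct $\Rf$-module structure. The standard workaround, used elsewhere in the paper, is to first establish the isomorphism over the universal ring $\Z[f_0,\dots,f_n]$ with the universal form, where $\Sf$ is well-behaved generically and all maps are canonical, and then specialize; the target isomorphism is between finitely generated flat $\Z$-modules, so it descends under specialization. A $\GL_2(\Z)$ change of coordinates then reduces the case $f_0=0$ (but $f\not\equiv 0$) to the case $f_0\neq 0$, since both sides of the claimed isomorphism are $\GL_2(\Z)$-equivariant ideal classes.
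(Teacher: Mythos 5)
Your strategy---adjunction to identify $\mathcal{O}_{\Sf}(n-2)$ with the relative dualizing sheaf, then coherent duality to get $\Gamma(\omega_{\Sf/\Z})\isom\Hom_\Z(\Rf,\Z)$---is genuinely different from the paper's argument. The paper instead defines the pairing $\fIf_k'\tensor\fIf_{n-2-k}\ra\fIf_{n-2}'\ra\OS$ directly and proves it is perfect by an explicit four-case computation on the universal form exhibiting dual bases; Theorem~\ref{T:invdif} is then the case $k=n-2$ (Corollary~\ref{C:JInvdif}). Your route is more conceptual and would explain the full family of dualities $\fIf_k'\isom\fIf_{n-2-k}^*$ at once, but as written it has a genuine gap.

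The gap is the assertion that $\Sf\to\Spec\Z$ is finite and flat of degree $n$ whenever $f_0\ne 0$. This is false: if a prime $p$ divides every coefficient of $f$ (e.g.\ $f=2x^n+2y^n$, with $f_0=2\ne 0$), then $\Sf$ contains the whole fiber $\P^1_{\Z/p\Z}$, so $\pi\colon\Sf\to\Spec\Z$ has a one-dimensional fiber and is not finite; indeed Theorem~\ref{T:affine} shows $\Sf$ is affine only for primitive $f$. So finite-flat duality is unavailable precisely in the imprimitive case, which is the case the theorem is at pains to include (it is where $\Rf$ fails to be Gorenstein and $\If$ fails to be invertible). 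Your fallback of proving the statement for the universal form and specializing does not repair this: the universal $\Sf$ over $\Z[f_0,\dots,f_n]$ is likewise not finite over its base (its fiber over the origin is $\P^1$), and perfectness of a pairing of finite free modules is the invertibility of a determinant, which cannot be deduced from its nonvanishing at the generic point. Two repairs are available: (a) invoke Grothendieck--Serre duality for the proper, non-finite morphism $\pi$, using that $\Sf$ is a Cartier divisor in the regular scheme $\P^1_\Z$, hence Gorenstein with $\pi^{!}\Z\isom\mathcal{O}_\Sf(n-2)$, and that $H^1(\Sf,\mathcal{O}_\Sf)=H^1(\Sf,\mathcal{O}_\Sf(n-2))=0$ by the long exact sequences, so both sides of the duality are concentrated in degree $0$ and free of rank $n$; or (b) do what the paper does and check on explicit bases that the pairing $j\mapsto(r\mapsto\phi(rj))$ is perfect, where $\phi$ is the connecting map to $H^1(\P^1_\Z,\mathcal{O}(-2))\isom\Z$ --- which is exactly the trace map your duality argument would produce.
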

Simon \cite[Proposition 14]{Simonobst} independently discovered that when $f$ is primitive and irreducible
that $(\If^\#)^{2-n}$ is in the ideal class of the inverse different of $\Rf$.  In this paper,
we find that while $(\If^\#)^{2-n}$ is not naturally constructed as a module, $\If^{n-2}$ can
be naturally constructed and is always the inverse different, even when $f$ is reducible, primitive, or the zero form!
When $f\equiv0$, we construct $\If^{n-2}$ as a module and the above theorem holds,
but the module is not realizable as a fractional ideal of $\Rf$.

\begin{corollary}\label{C:Goren}
For $n\ne2$ and $f\not\equiv 0$,
 the ring $\Rf$ is Gorenstein if and only if the form $f$ is primitive.
\end{corollary}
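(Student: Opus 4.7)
The strategy is to combine Theorem~\ref{T:invdif} with Proposition~\ref{P:Iinvert}, using the standard characterization of the Gorenstein property for rank-$n$ $\Z$-orders. Namely, a $\Z$-algebra $R$ that is free of rank $n$ as a $\Z$-module is Gorenstein if and only if its dualizing module $\Hom_\Z(R,\Z)$ is invertible as an $R$-module (equivalently, locally free of rank $1$). By Theorem~\ref{T:invdif}, $\If^{n-2}\isom \Hom_\Z(\Rf,\Z)$ as $\Rf$-modules, so $\Rf$ is Gorenstein precisely when the ideal class of $\If^{n-2}$ is invertible. The corollary therefore reduces to showing that $\If^{n-2}$ is invertible if and only if $f$ is primitive.

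For the ``if'' direction, suppose $f$ is primitive. By Proposition~\ref{P:Iinvert} the class of $\If$ is invertible, and hence so is the class of any positive power of $\If$. Since $n \geq 3$ (the hypothesis $n\neq 2$ combined with the fact that binary $1$-ic or $0$-ic forms are excluded from the setup), we have $n-2 \geq 1$, and so $\If^{n-2}$ is invertible.

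For the converse, suppose $\If^{n-2}$ is invertible; the goal is to deduce invertibility of $\If$ itself and then apply Proposition~\ref{P:Iinvert}. Working with fractional ideals of $\Rf$ inside $\Qf$, the factorization $\If^{n-2} = \If \cdot \If^{n-3}$ holds by definition (with the convention $\If^0 = \Rf$ in the case $n=3$). If $J$ is a fractional ideal inverse to $\If^{n-2}$, then $\If \cdot (\If^{n-3}\cdot J) = \Rf$, exhibiting $\If^{n-3}\cdot J$ as an inverse to $\If$. Proposition~\ref{P:Iinvert} now yields that $f$ is primitive. This is precisely where the hypothesis $n\neq 2$ is essential: when $n=2$, the module $\If^{n-2}=\Rf$ is trivially invertible and carries no information about $\If$, which is why the statement fails in that case.

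The only mildly delicate point is the case $f_0=0$ (with $f\not\equiv 0$), where the concrete realization inside $\Qf$ requires a little care; however, Proposition~\ref{P:Iinvert} is already stated for all $f\not\equiv 0$, and the argument above only manipulates invertibility of $\Rf$-modules abstractly, so it carries over without change. I do not anticipate any real obstacle here: the substantive work is packaged in Theorem~\ref{T:invdif} (identifying the inverse different with $\If^{n-2}$) and Proposition~\ref{P:Iinvert} (characterizing invertibility of $\If$ by primitivity), both of which are established earlier in the paper.
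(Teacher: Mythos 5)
Your proposal is correct and takes essentially the same route as the paper: reduce Gorenstein-ness to invertibility of the inverse different, identify the inverse different with $\If^{n-2}$ via Theorem~\ref{T:invdif}, and invoke Proposition~\ref{P:Iinvert}. The only difference is that you spell out the equivalence ``$\If^{n-2}$ invertible $\iff$ $\If$ invertible,'' which the paper dismisses as obvious just after Proposition~\ref{P:Iinvert}.
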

\begin{proof}
It is known that for rank $n$ rings, the condition of Gorenstein
is equivalent to the inverse different being invertible.  For the ring
$\Rf$, the inverse different is in the same ideal class as $\If^{n-2}$
and thus this follows from Proposition~\ref{P:Iinvert}.
\end{proof}

\begin{remark}
When we have a binary form with $f_0=\pm 1$,
then $\Rf=\Z[\theta]/f(\theta)$.  Such rings, generated by one
element, are called \emph{monogenic}.  We see that all monogenic rings
are $\Rf$ for some binary form $f$.
Also, in this case $\If^k\isom I_f^\#\isom \Rf$ as $\Rf$-modules.
\end{remark}

\subsection{Explicit multiplication and action tables}\label{S:tables}

If a form $f=f_0x^n +f_1 x^{n-1}y +\dots +f_ny^n$ has $f_0=0$, but
$f\not\equiv 0$, then we can act by \GZ to take 
$f$ to a form $f'$ with $f_0'\ne 0$.  We then can define the ring $\Rf$
and the $\Rf$ ideal classes $\If$ and $\If^\#$ using $f'$.  Since
the ring and ideal classes are \GZ invariants, it does 
not matter which $f'$ we use. In this section, we 
give a more systematic way to define the rings $\Rf$ and ideal classes
\If that works even when $f\equiv 0$. 

Given a base ring \BR, if we form a rank $n$ \BR-module
$R=\BR r_1 \oplus \dots \BR r_n$, we can specify
a $\BR$-bilinear product on $R$ by letting
$$
r_i r_j =\sum_{k=1}^n c_{i,j,k} r_k\quad\textrm{for } c_{i,j,k}\in\BR,
$$
and $e=\sum_{k=1}^n e_{k}r_k$ for some $e_k\in\BR$.  If this
product is commutative, associative, and $e$ is a multiplicative identity 
(which is a queston of certain polynomial equalities with integer coefficients
being satisfied by the $c_{i,j,k}$ and $e_k$) then we call
the $c_{i,j,k}$ and $e_k$ a \emph{multiplication table}.  A multiplication
table gives a ring $R$ with a specified $\BR$-module basis.

Similarly, we can form a free rank $m$ $\BR$-module
$I=\BR \alpha_1 \oplus \dots \BR \alpha_m$, where usually 
 $m$ is a multiple of $n$.  Then we can specify a $\BR$-bilinear 
 product $R\times I \ra \BR$
 by
 $$
r_i \alpha_j =\sum_{k=1}^m d_{i,j,k} \alpha_k\quad\textrm{for } d_{i,j,k}\in\BR.
$$
That this product gives an $R$-module action on $I$
is a question of certain polynomial equalities with integer coefficients
being satisfied by the $d_{i,j,k}$, $c_{i,j,k}$ and $e_k$, and in the case 
they are satisfied we
call the $d_{i,j,k}$  an \emph{action table}.  An action table
gives an $R$-module $I$ with a specified \BR-module basis.

If we want to work directly with forms with $f_0=0$ (for example,
to deal with the form $f\equiv 0$ or to study the form $f=x^2y+xy^2$
when we replace $\Z$ with $\Z/2\Z$), we see that we can define 
a ring $\fRf$ from the multiplication table given 
in Equation~\eqref{E:mult}.  The conditions of commutativity and
associativity on this multiplication table are polynomial
identities in the $f_i$ since the construction of $R$ can also be 
made with the universal form.

 Equations~\eqref{E:Ibasis} and \eqref{E:Isbasis}
 display 
 $\Z$-module bases of $\If$ and $\If^\#$.   
 The action of elements of $\Rf$ on these $\Z$-module bases 
 is given by an action table of
  polynomials in the $f_i$ with $\Z$ coefficients.
  (We can see this, for example, because
  the proofs of Propositions~\ref{P:Ibasis} and \ref{P:Isbasis}
  work over the base ring $\Z[f_0,\dots,f_n]$.)
  These polynomials in the $f_i$ formally give an action table
 because they give an action table over the base ring
  $\Z[f_0,\dots,f_n]$.
    Thus, we can construct 
$\fRf$-modules $\fIf$ and $\fIf^\#$
first as rank $n$ $\Z$-modules and then give them an \fRf 
action by the same polynomials in the $f_i$ that make the action
tables for \If and $\If^\#$ respectively.  

We can also form versions of the powers of \If this way, 
which 
are
$\fRf$-modules that 
we call $\fIf_k$ for $1\leq k \leq n-1$.
We use the action table of $\If^k$
with the basis of Equation~\eqref{E:Ibasis}.
The action table has entries that are integer
polynomials in the $f_i$ for the same reasons as above.
We only have defined
the $\fIf_k$ as \fRf-modules and not as fractional ideals
of \fRf.  Whenever $f\not\equiv0$, however, we have also given a a realization of
the $\fIf_k$ as the ideal class $I_f^k$ (or $I_f^\#$ when $k=-1$).
  Let $\fIf_{-1}:=\fIf^\#$ and $\fIf:=\fIf_1$. 
We do not put the $k$
in the exponent because even when $f$ is non-zero but non-primitive, it is not clear
that the module ${\fIf}_k$ is a power of the module $\fIf$.
When $f$ is primitive, since $I_f$ is invertible, its ideal 
class powers are the same as its module powers.

\subsection{Simple geometric construction}\label{S:ZGeomConst}
For many reasons, we desire a canonical, basis free description
of the ring $\fRf$ and $\fRf$-modules $\fIf_k$.
We would like to deal more uniformly with the case that $f_0=0$ and see easily
the $\GL_2(\Z)$ invariance of our constructions. A binary $n$-ic form $f$ describes a subscheme of
$\P^1_\Z$ which we call $\Sf$.  Let $\mathcal{O}(k)$ denote
the usual sheaf on $\P^1_\Z$ and let
$\mathcal{O}_\Sf(k)$ denote its pullback to \Sf.
 Also, for a sheaf $\mathcal{F}$,
let $\Gamma(U,\mathcal{F})$ be 
sections of $\mathcal{F}$ on $U$ and let
$\Gamma(\mathcal{F})$ be the global sections of $\mathcal{F}$.

\begin{theorem}\label{T:globalbun}
For a binary form $f\not\equiv 0$, the ring $\Gamma(\mathcal{O}_{\Sf})$ of global functions of $\Sf$ is
isomorphic to $\Rf$.
The global sections 
$\Gamma(\mathcal{O}_{\Sf}(k))$ have an $\Gamma(\mathcal{O}_{\Sf})$-module structure,
and since $\Rf\isom\Gamma(\mathcal{O}_{\Sf})$,
 this gives $\Gamma(\mathcal{O}_{\Sf}(k))$ an \Rf-module structure.
For, $1\leq k\leq n-1$, the global sections
$\Gamma(\mathcal{O}_{\Sf}(k))$ are isomorphic
to $\If^k$ as  an $\Rf$-module.
The global sections
$\Gamma(\mathcal{O}_{\Sf}(-1))$ are isomorphic
to $\If^\#$ as  an $\Rf$-module.
\end{theorem}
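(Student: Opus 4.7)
The plan is to compute $\Gamma(\mathcal{O}_\Sf(k))$ via the standard affine cover of $\P^1_\Z$ and to exhibit, one at a time, the bases of $\If^k$ and $\If^\#$ from Equations~\eqref{E:Ibasis} and \eqref{E:Isbasis} as global sections. Since both the geometric and the algebraic constructions are $\GZ$-equivariant, we may freely act by $\GZ$ to reduce to the case $f_0\ne 0$ (and then, if convenient, to $f_n\ne 0$ as well; equivalently, prove everything for the universal form over $\Z[f_0,\ldots,f_n]$ and specialize). Set $U_0=\Spec\Z[\theta]$, $U_1=\Spec\Z[\tau]$ with $\theta\tau=1$, and let $A=\Z[\theta]/f(\theta,1)$, $B=\Z[\tau]/f(1,\tau)$, $C=A[\theta^{-1}]$. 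Trivializing $\mathcal{O}(k)$ by $y^k$ on $U_0$ and by $x^k=\theta^k y^k$ on $U_1$, the \v{C}ech complex identifies $\Gamma(\mathcal{O}_\Sf(k))$ with the kernel of $(a,b)\mapsto a-\theta^k b$ from $A\oplus B$ to $C$.

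The short exact sequence $0\ra\mathcal{O}(k-n)\stackrel{f}{\ra}\mathcal{O}(k)\ra\mathcal{O}_\Sf(k)\ra 0$ on $\P^1_\Z$ yields, for $-1\leq k\leq n-1$,
$$0\ra \Gamma(\mathcal{O}(k))\ra \Gamma(\mathcal{O}_\Sf(k))\ra H^1(\P^1_\Z,\mathcal{O}(k-n))\ra 0,$$
since $\Gamma(\mathcal{O}(k-n))=0$ and $H^1(\mathcal{O}(k))=0$ in this range. The injective piece $\Gamma(\mathcal{O}(k))$ comes from the monomials $x^iy^{k-i}$, which restrict in the \v{C}ech description to $(\theta^i,\tau^{k-i})$ and hence contribute the basis vectors $1,\theta,\ldots,\theta^k$ for $k\geq 0$. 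For the remaining basis vectors $\zeta_j$ of $\If^k$ ($k+1\leq j\leq n-1$), use $f(1,\tau)=0$ to write $\zeta_j=-(f_j+f_{j+1}\tau+\cdots+f_n\tau^{n-j})\in B$; then $(\zeta_j,\zeta_j\tau^k)\in A\oplus B$ is a valid cocycle, and lifting both entries to $\Z[\theta,\theta^{-1}]$ one computes the coboundary
$$\zeta_j-\theta^k(\zeta_j\tau^k)\ =\ \theta^{j-n}f(\theta,1).$$
Thus $\zeta_j$ maps to the \v{C}ech class of $\theta^{j-n}$ under the connecting map $\Gamma(\mathcal{O}_\Sf(k))\to H^1(\mathcal{O}(k-n))$. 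Since $\theta^{k-n+1},\ldots,\theta^{-1}$ form a $\Z$-basis of $H^1(\P^1_\Z,\mathcal{O}(k-n))$, the $\zeta_{k+1},\ldots,\zeta_{n-1}$ give a basis of the quotient, and combining with the injective piece exhibits exactly the $\Z$-basis of $\If^k$ in~\eqref{E:Ibasis}.

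The case $k=-1$ is parallel: $\Gamma(\mathcal{O}(-1))=0$, and for the basis $f_0\theta^j+f_1\theta^{j-1}+\cdots+f_j$ of $\If^\#$ ($0\leq j\leq n-1$) one pairs with $-(f_{j+1}+\cdots+f_n\tau^{n-j-1})\in B$; the analogous coboundary calculation again yields $\theta^{j-n}f(\theta,1)$, so these sections map to the \v{C}ech basis $\theta^{-n},\theta^{-n+1},\ldots,\theta^{-1}$ of $H^1(\mathcal{O}(-n-1))$ and exhaust $\Gamma(\mathcal{O}_\Sf(-1))$. The $\Rf$-module structure is then automatic: projection to the first \v{C}ech coordinate embeds each $\Gamma(\mathcal{O}_\Sf(k))$ as a $\Z$-submodule of $A\subset\Qf$, and the $\Gamma(\mathcal{O}_\Sf)$-action is componentwise multiplication, which coincides with multiplication in $\Qf$ that defines $\Rf$ acting on $\If^k$ and $\If^\#$; taking $k=0$ promotes this to the ring isomorphism $\Gamma(\mathcal{O}_\Sf)\isom\Rf$.

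The main obstacle is the coboundary computation $\zeta_j-\theta^k(\zeta_j\tau^k)=\theta^{j-n}f(\theta,1)$, which must match our candidate basis to the standard \v{C}ech basis of $H^1(\P^1_\Z,\mathcal{O}(k-n))$ under the snake-lemma connecting map; once this is done, the short exact sequence forces equality on the nose. A secondary technical point is ensuring that $A\to C$ is injective and $B$ is $\Z$-free of rank $n$, which may fail when $f_n=0$; this is finessed by taking a $\GZ$-translate with $f_0,f_n\ne 0$, or by passing to the universal form over $\Z[f_0,\ldots,f_n]$ and specializing.
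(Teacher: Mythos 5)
Your proposal is correct, but it settles the key step by a genuinely different route than the paper. The paper also works with the standard two\nobreakdash-chart cover and exhibits the same explicit sections (the monomials $x^iy^{k-i}$ together with elements $z_m$ that map to the $\zeta_j$), but it proves that these \emph{span} $\Gamma(\OSf(k))$ by a direct elementary manipulation: after an injectivity lemma identifies global sections with elements of $\Gamma(U_x\cap U_y,\OSf(k))$ lying in both restriction images, it writes the difference of the two representatives as $\sum_i c_i x^iy^{k-i-n}f$ and normalizes the $c_i$ to read off the span. You instead invoke the long exact sequence of $0\ra\mathcal{O}(k-n)\stackrel{f}{\ra}\mathcal{O}(k)\ra\mathcal{O}_{\Sf}(k)\ra 0$, count ranks, and check that your candidate sections hit a $\Z$-basis of $H^1(\P^1_\Z,\mathcal{O}(k-n))$ under the connecting map; your coboundary computations $\zeta_j-\theta^k(\zeta_j\tau^k)=\theta^{j-n}f(\theta,1)$ are correct and do exactly this. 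This is essentially the argument the paper deploys later (Section~\ref{SS:LES} and Proposition~\ref{P:identifybasis}) to identify the canonical sub- and quotient modules of $\fIf_k$ and to prove base-change compatibility; using it here buys you freeness and the rank of $\Gamma(\OSf(k))$ for free, makes the exact sequence~\eqref{E:Iexact} visible from the outset, and dispenses with the paper's injectivity lemma, at the cost of presupposing the cohomology of line bundles on $\P^1_\Z$ and a snake-lemma computation that the paper's more pedestrian argument avoids. One small imprecision: the chart ring $A=\Z[\theta]/f(\theta,1)$ need not inject into $\Qf$ when $f$ is imprimitive (it can have $\Z$-torsion), so the phrase ``$A\subset\Qf$'' is not literally correct; this is harmless, however, since all you need is injectivity of the composite $\Gamma(\OSf(k))\ra A\ra\Qf$, which holds because it sends your $\Z$-basis to $1,\theta,\dots,\theta^k,\zeta_{k+1},\dots,\zeta_{n-1}$, and these are $\Z$-independent in $\Qf$ because $f_0\ne 0$.
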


\begin{proof}
We can act by \GZ so that $f_0\ne 0$ and $f_n \ne 0$. 
Then if we write $\P^1_\Z=\Proj \Z[x,y]$, we can
cover $\P^1_\Z$ with the open subsets
$U_y$ and $U_x$ where $y$ and $x$ are invertible, respectively.
%Let $f(z)$ denote
%$f_0z^n +f_1 z^{n-1} +\dots +f_n$
%and 
%$F(z)$ denote
%$f_0 +f_1 z +\dots +f_nz^n$.
%Then $U_y$ is isomorphic
%to $\Spec \Z[t]/(f(t))$ and
%$U_T$ is isomorpphic
%to $\Spec \Z[T]/(F(T))$.
%The overlap $U_x \cap U_y$ is isomorphic to
%the spectrum of $R_0=\Z[t,T]/(f(t),tT-1,F(T))\isom \Z[t,t^{-1}]/(f(t))$
%with $T$ mapping to $t^{-1}$.
\begin{lemma}
If $f_n\ne 0$, then the restriction map
\begin{align*}
\Gamma(U_y,\OSf(k)) &\ra \Gamma( U_y\cap U_x,\OSf(k))
%\Z[t]/(f(t))&\ra \Z[t,t^{-1}]/(f(t))
\end{align*}
is injective.
\end{lemma}
\begin{proof}
If $\sum_{i\geq -k} a_ix^{k+i}y^{-i} \mapsto 0$, with $a_i\in\Z$, then
$\sum_{i\geq -k} a_ix^{k+i}y^{-i}=\sum_j d_j x^jy^{k-n-j}f$, where
$d_j \in \Z$.  
Since
$\sum_{i\geq -k} a_ix^{k+i}y^{-i}$ has no terms of negative
degree in $x$ and $f_n\ne0$, we conclude that
$d_j=0$ for $j<0.$  Thus, $\sum_{i\geq -k} a_ix^{k+i}y^{-i}$
is 0 in $\Gamma( U_y,\OSf(k))$.
\end{proof}

Similarly, since $f_0\ne 0$, we have that
$\Gamma (U_x,\OSf(k)) \ra \Gamma( U_y\cap U_x,\OSf(k))$
is an injection.  

So we wish to determine the elements of
$\Gamma( U_y\cap U_x,\OSf(k))$
that are in the images
of both $\Gamma( U_x,\OSf(k))$ and $\Gamma( U_y,\OSf(k))$
First, note that $x^k, x^{k-1}y,\dots, y^k$
are in the images of both restriction maps.
In 
$\Gamma( U_y\cap U_x,\OSf(k))$ for $1\leq m \leq n-k-1$, we have
$$f_0 x^{k+m}y^{-m} + \dots + f_{k+m-1}xy^{k-1} = -f_{k+m}y^{k} -\dots - f_n x^{k+m-n}y^{n-m},$$
and thus
$z_m:=f_0 x^{k+m}y^{-m} + \dots f_{k+m-1}xy^{k-1}$ is 
in the images of both
$\Gamma( U_x,\OSf(k))$ and $\Gamma( U_y,\OSf(k))$.

Now, let $p$ be in both images so that
 $p=\sum_{i\geq -k} a_i x^{k+i}y^{-i}=\sum_{i\leq -k} b_i x^{-i}y^{k+i}$
 with $a_i,b_i\in\Z$.  If $a=\sum_{i\geq -k} a_i x^{k+i}y^{-i}\in \Gamma( U_y,\OSf(k))$ and 
 $b=\sum_{i\leq 0} b_i x^{-i}y^{k+i}\in \Gamma( U_x,\OSf(k))$, then we have a formal equality 
 $a-b=\sum_{i} c_i x^iy^{k-i-n} f$ 
 (in $\Z[x,x^{-1},y,y^{-1}]$)
 where $c_i\in \Z$.
 We can assume without loss of generality
 that $c_i=0$ for $i\geq 0$ because
 any $c_i x^iy^{k-i-n} f$ with $i$ non-negative
 we could just subtract from the representation $a$
 to obtain another such representation of $p$ in $\Gamma( U_y,\OSf(k))$.
 Similarly, we can assume that $c_i=0$ for $i\leq k-n$.
 From the equality 
 $a-b=\sum_{i=-n+k+1}^{-1} x^iy^{k-i-n} f$, we can conclude that
 $a$ is a linear combination of $x^k, x^{k-1}y,\dots, y^k$
  plus all the terms 
  $\sum_{i=-n+k+1}^{-1} x^iy^{k-i-n} f$
  of positive degree in $x$,
 and $b$ is that same linear combination minus all the 
 terms of $\sum_{i=-n+k+1}^{-1} x^iy^{k-i-n} f$ of
 positive degree in $x$.
 The terms of positive degree in $x$ of $x^iy^{k-i-n} f$ sum to
 $z_{n+i-k}$.
 Thus, $a\in\ang{x^k, x^{k-1}y,\dots, y^k,z_1,\dots,z_{n-1-k}}_\Z$.  

For $k\geq 0$, 
when we map $\ang{x^k, x^{k-1}y,\dots, y^k,z_1,\dots,z_{n-1-k}}_\Z$ to
 $\Qf$ via $x \mapsto \theta$ and $y \mapsto 1$, the image
 is the free rank $n$ \Z-module 
 $\ang{1,\theta, \dots, \theta^k, \zeta_{k+1},\dots,\zeta_{n-1}}_\Z$.
 Thus, the map is an isomorphism of $\ang{x^k, x^{k-1}y,\dots, y^k,z_1,\dots,z_{n-1-k}}_\Z$,
   the global sections of $\OSf(k)$, onto $\If^k$.
   Clearly the $\Gamma(\OSf)$-module structure on
   $\Gamma(\OSf(k))$ is the same as the 
   the $\Rf$-module structure on $\If^k$ (including the $k=0$ case, which gives
the ring isomorphism $\Rf\isom\Gamma(\OSf)$).
 When $k=-1$,
 when we map $\ang{z_1,\dots,z_{n}}_\Z$ to
 $\Qf$ via $x \mapsto \theta$ and $y \mapsto 1$, the image
 is the free rank $n$ \Z-module 
 $\If^\#$. Similarly we conclude the theorem for $\If^\#$.
\end{proof}

Note that 
though $\mathcal{O}_\Sf(k)$ is always an invertible $\mathcal{O}_{\Sf}$-module,
when $\Sf$ is not affine,
the global sections $\Gamma(\mathcal{O}_{\Sf}(k))$ are not necessarily
an invertible $\Gamma(\mathcal{O}_{\Sf})$-module.  In fact, we know that
for nonzero $f$ and  $1\leq k \leq n-1$ that
$\Gamma(\mathcal{O}_\Sf(k))$ is an invertible $\Gamma(\mathcal{O}_\Sf)$-module
exactly when $f$ is primitive.

\begin{theorem}\label{T:affine}
Let $f$ be a binary form with non-zero discriminant.
The scheme \Sf is affine if and only if $f$ is primitive.
\end{theorem}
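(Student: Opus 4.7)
The plan is to prove both directions by analyzing the fibers of the structure morphism $\Sf \to \Spec \Z$ and invoking the standard fact that a closed subscheme of an affine scheme is affine.

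For the ``not primitive implies not affine'' direction, suppose $f$ is not primitive, so there exists a prime $p$ dividing every coefficient $f_0,\dots,f_n$. Then $f \equiv 0 \pmod{p}$, so the equation defining $\Sf$ vanishes identically on the fiber $\P^1_{\F_p}$. Thus $\P^1_{\F_p}$ is a closed subscheme of $\Sf$. Since $\P^1_{\F_p}$ is projective of positive dimension over a field, it is not affine. But a closed subscheme of an affine scheme is affine, so $\Sf$ itself cannot be affine.

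For the converse, suppose $f$ is primitive. I would first argue that $\Sf \to \Spec \Z$ is quasi-finite. The generic fiber is defined by the nonzero polynomial $f \in \Q[x,y]$ and hence is zero-dimensional; for any prime $p$, since $f$ is primitive, $f \not\equiv 0 \pmod p$, so the fiber is a proper closed subscheme of $\P^1_{\F_p}$ and therefore zero-dimensional. Next, the morphism $\Sf \to \Spec \Z$ is proper as $\Sf$ is closed in the proper scheme $\P^1_\Z$. A proper quasi-finite morphism is finite (by Zariski's Main Theorem, or directly because a projective quasi-finite morphism is finite), so $\Sf$ is finite over $\Spec \Z$ and in particular affine.

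The main obstacle, such as it is, lies in the ``primitive implies affine'' direction, where one must combine the fiberwise finiteness afforded by primitivity with properness to conclude global finiteness; the nonzero discriminant hypothesis is convenient for ensuring $f \not\equiv 0$ (so that the generic fiber is genuinely zero-dimensional and the earlier constructions of $\Rf$ apply), but the argument above only really requires that $f$ is nonzero modulo every prime, which is exactly primitivity. As a sanity check, in the primitive case the resulting finite ring $\Gamma(\Sf, \mathcal{O}_{\Sf}) = \Rf$ satisfies $\Sf = \Spec \Rf$, consistent with Theorem~\ref{T:globalbun}.
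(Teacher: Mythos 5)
Your proof is correct, and both directions take a genuinely different route from the paper's. For ``affine $\Rightarrow$ primitive,'' the paper argues module-theoretically: if $\Sf$ is affine then $\Gamma(\mathcal{O}_{\Sf}(1))\isom\If$ must be an invertible $\Rf$-module (since $\mathcal{O}_{\Sf}(1)$ is an invertible sheaf on an affine scheme), and then Proposition~\ref{P:Iinvert} forces primitivity; your observation that a non-primitive $f$ puts a copy of $\P^1_{\F_p}$ as a closed subscheme of $\Sf$, which cannot sit inside an affine scheme, is a clean purely geometric replacement. For ``primitive $\Rightarrow$ affine,'' the paper works by hand: it uses finiteness of the class group of $\Rf$ to produce elements $a,b$ generating the unit ideal with $(\Sf)_a=U_y$ and $(\Sf)_b=U_x$, and concludes via the criterion that a scheme covered by basic open affines whose defining functions generate the unit ideal is affine; your argument instead notes that primitivity makes every fiber of the proper morphism $\Sf\to\Spec\Z$ zero-dimensional, so the morphism is proper and quasi-finite, hence finite, hence affine. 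Your route is shorter and strictly more general --- it does not use the non-zero discriminant hypothesis, which the paper needs so that $\Qf$ is a product of number fields and the class group of $\Rf$ is finite --- at the cost of invoking the theorem that proper plus quasi-finite implies finite (a consequence of Zariski's Main Theorem). The paper's more explicit argument has the side benefit of identifying $U_y$ and $U_x$ as the basic opens $(\Sf)_a$ and $(\Sf)_b$ and of exhibiting exactly how $\Sf$ differs from $\Spec\Rf$ over primes dividing $f$, which the surrounding discussion uses.
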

\begin{proof}
From Theorem~\ref{T:globalbun}
we see that if \Sf is affine, then since
$\Gamma(\mathcal{O}_{\Sf}(1))\isom \If$ and $\mathcal{O}_{\Sf}(1)$ is  invertible we must
have that $ \If$ is an invertible $\Rf$-module.  Thus 
by Proposition~\ref{P:Iinvert}, if $\Sf$ is affine then
$f$ is primitive.  We see that
 $\Sf$ has a vertical fiber over $(p)$ when $p\mid f$.
 Moreover, when $p\mid f$ we see from
 the multiplication table (Equation~\eqref{E:mult}) that the fiber of 
 $a$ over $(p)$ is the non-reduced $n$-dimensional point
 $\Spec \Z_{/(p)} [x_1,x_2,\dots,x_{n-1}]/(x_ix_j)_{1\leq i,j\leq n-1}$
which does not embed into $\P^1_\Z$.  

Now suppose that $f$ is primitive and has non-zero discriminant.
We can change variables so that $f_0\ne 0$ and $f_n\ne 0$.
From the standard open affine cover of $\P^1_\Z$, we have that
$\Sf$ is covered by affine opens $U_y=\Spec \Z [x/y]/(f/y^n)$ and
$U_x=\Spec \Z [y/x]/(f/x^n)$.  Since $\Rf$ is a finitely generated
$\Z$-module inside $\Qf$ (which is a product of number fields), we know
that the class group of $\Rf$ is finite.  So, let $m$ be such that
$(\If^\#)^m$ is principal.  (Note that by Proposition~\ref{P:invertextra} we
know that $(\If^\#)$ is an invertible $\Rf$-module.)  Let
$J=\theta \If^\#$ which is an integral $\Rf$-ideal.  
Let $J^m=(b)$ and $(\If^\#)^m=(a)$, with $a,b \in \Rf$.  
As in the computation in the proof of
Proposition~\ref{P:invertextra}, we see that $\If^\# + J =(1)$
and thus there exists $\alpha,\beta\in\Rf$ such that $\alpha a+\beta b=1$.  We claim
that $(\Sf)_{a}=U_y$ as open subschemes of $\Sf$, where
$(\Sf)_{a}$ denotes the points of $\Sf$ at which $a$ is non-zero.  

In the ring $\Qf$ we have that $a \theta^m =b u$, where $u$ is a unit
in $\Rf$. 
 In $\Gamma(U_y\cap U_x,\OSf)\isom \Qf$ this translates to $a (\frac{x}{y})^m =b u$
 Thus 
$$a(\alpha+\frac{\beta}{u}\left(\frac{x}{y}\right)^m)= \alpha a+\beta\frac{a}{u}\left(\frac{x}{y}\right)^m=\alpha a+\beta b=1$$
in $\Gamma(U_y,\OSf)$ (which injects into $\Gamma(U_y\cap U_x,\OSf)\isom \Qf$).
Therefore $a$ is not zero at any point of $U_y$, and so $U_y \subset (\Sf)_{a}$.
  Suppose that  we have a point $p\not\in U_y$ so that
$\frac{y}{x}$ is 0 at $p$.  
Since in $\Gamma(U_y\cap U_x,\OSf)$ we have $a  =b u (\frac{y}{x})^m$, this is also true
in $\Gamma(U_x,\OSf)\isom \Z [y/x]/F(y/x)$ (which injects into $\Gamma(U_y\cap U_x,\OSf)$).
Since we have $p\in U_x$, then $a$ is also 0 at $p$ and so 
$p\not\in(\Sf)_{a}$ and we conclude $(\Sf)_{a}\subset U_y$.
We have shown $(\Sf)_{a}=U_y$ and by switching $x$ and $y$
we see similarly that $(\Sf)_{b}=U_x$.  Since $(a,b)$ is the
unit ideal in $\Gamma(\Sf,\OSf)\isom R_f$, and $(\Sf)_{a}$
and $(\Sf)_{b}$ are each affine, we have that $\Sf$ is affine
(\cite[Exercise 2.17(b)]{Hartshorne}).

We could similarly argue over a localization of $\Z$, and thus
 localizing away from the $\Z$ primes that divide $f$, the scheme
$\Sf$ is the same as $\Spec R_f$.  Over the primes of $\Z$
that divide $f$, $S_f$ has vertical fibers isomorphic to $\P^1_{\Z/p\Z}$
 but
$\Spec R_f$ has a non-reduced $n$-dimensional point.
\end{proof}

\subsection{Geometric construction by hypercohomology}\label{SS:hyperco}

The description of $\Rf$ as the global functions of the subscheme given
by $f$ is very satisfying as a coordinate-free, canonical and simple 
description of $\Rf$, but still does not take care of the form $f\equiv 0$.  
It may seem at first that $f\equiv 0$ is a pesky, uninteresting case,
but we will eventually want to reduce a form so that
its coefficients are in $\Z/p\Z$, in which
case many of our non-zero forms will go to 0.  In general we may want to 
base change, and the formation of the ring $\Gamma(\mathcal{O}_{\Sf})$
does not commute with base change.  For example,
a non-zero binary $n$-ic all of whose coefficients are divisible $p$ will give
a rank $n$ ring $\Gamma(\mathcal{O}_{\Sf})$ but the reduction $\bar{f}$ of $f$ to $\Z/p\Z$ would 
give $S_{\bar{f}}=\P^1_{\Z/p\Z}$ and thus a ring of global functions that is rank 1
over $\Z/p\Z$.

We can, however, make the following construction, which was given
for $n=3$ by Deligne in a letter \cite{Delcubic} to Gan, Gross, and Savin.
On $\P^1_\Z$ a binary $n$-ic form $f$ gives $\mathcal{O}(-n) \stackrel{f}{\ra} \mathcal{O}$,
whose image is the ideal sheaf of $\Sf$.  We can consider 
$\mathcal{O}(-n) \stackrel{f}{\ra} \mathcal{O}$ as a complex in degrees -1 and 0, and then take
the hypercohomology of this complex:
\begin{equation}
R=H^0R\pi_* \left(\mathcal{O}(-n) \stackrel{f}{\ra} \mathcal{O}\right).
\end{equation}
(Here we are taking the 0th right hyper-derived functor
of the pushforward by $\pi : \P^1_\Z \ra \Spec\Z$
on this complex.  Alternatively, we pushforward the complex in the derived category and then take $H^0$.
We take hypercohomology since we are applying
the functor to a complex of sheaves and not just a single sheaf.)
There is a product on the complex $\mathcal{O}(-n) \stackrel{f}{\ra} \mathcal{O}$ given
as $\mathcal{O} \tensor \mathcal{O} \ra \mathcal{O}$ by multiplication, $\mathcal{O}\tensor \mathcal{O}(-n)\ra\mathcal{O}(-n)$ by the
$\mathcal{O}$-module action, and $\mathcal{O}(-n)\tensor\mathcal{O}(-n)\ra 0$. 
This product is clearly commutative and associative, and induces a product on $R$.
 The map of complexes
$$
\begin{CD}
 @. \mathcal{O} @.\\
@. @VVV @.\\
\mathcal{O}(-n) @>>> \mathcal{O} @.
\end{CD} 
$$
induces $\Z\ra  R$.  
(Of course, 
$H^0R\pi_* (\mathcal{O})$ is just $\pi_*(\mathcal{O})\isom \Z$.)
  It is easy to see that
$1\in H^0R\pi_* (\mathcal{O})$ acts as the multiplicative identity.

When $f\not\equiv0$, the map $\mathcal{O}(-n) \stackrel{f}{\ra} \mathcal{O}$ is injective, and
thus the complex $\mathcal{O}(-n) \stackrel{f}{\ra} \mathcal{O}$ is 
 homotopy equivalent to $\mathcal{O} / f(\mathcal{O}(-n))\isom \mathcal{O}_\Sf$ (as a chain complex in the 0th degree).
The homotopy equivalence also respects the product structure on the complexes.
Thus when $f\not\equiv0$, we have
$R\isom\pi_*(\mathcal{O}_{\Sf})$, and since $\Spec \Z$ is affine
we can consider $\pi_*(\mathcal{O}_{\Sf})$ simply as a $\Z$-module isomorphic
to $\Gamma(\mathcal{O}_{\Sf})\isom R_f$.  When $f\equiv 0$ we have 
$$
R=H^0R\pi_*(\mathcal{O}) \oplus H^1R\pi_*(\mathcal{O}(-n)) \isom \Z \oplus \Z^{n-1}
$$
as a $\Z$-module and with multiplication 
given by $(1,0)$ acting as the multiplicative identity and
$(0,x)(0,y)=0$ for all $x,y \in \Z^{n-1}$.
 This agrees with
the definition of $\mathcal{R}_0$ given in Section~\ref{S:tables} that
used the coefficients of $f$ to give a multiplication table for \fRf.
So we see this definition of $R$ is a natural extension to all $f$ of the construction
$\Gamma(\OSf)$ for non-zero $f$, especially since $R$ gives a rank $n$
ring even when $f\equiv 0$.  

\begin{theorem}\label{T:twocon}
For all binary $n$-ic forms $f$, we have
$$\fRf\isom H^0R\pi_* \left(\mathcal{O}(-n) \stackrel{f}{\ra} \mathcal{O}\right)$$ as rings.
(Note that $\fRf$ is defined in Section~\ref{S:tables}.)
\end{theorem}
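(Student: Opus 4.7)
The plan is to prove the theorem universally over $B=\Z[f_0,\ldots,f_n]$ for the universal form $F=f_0 x^n+\cdots+f_n y^n$, and then to deduce the general case by base change along the ring map $\varphi:B\to A$ sending the universal coefficients to those of a given form $f$ over a base ring $A$ (in particular $A=\Z$).

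In the universal case, $B[x,y]$ is a domain, so $F$ is a nonzero-divisor and $\mathcal{O}(-n)\xrightarrow{F}\mathcal{O}$ is injective on $\P^1_B$; the two-term complex is then homotopy equivalent, compatibly with its product, to the cokernel $\mathcal{O}_{S_F}$ placed in degree zero (the product-compatibility statement recalled just before the theorem applies whenever the map is injective). Hence $H^0 R\pi_*(\mathcal{O}(-n)\xrightarrow{F}\mathcal{O})\isom \pi_*\mathcal{O}_{S_F}=\Gamma(S_F,\mathcal{O}_{S_F})$ as $B$-algebras. Since both $f_0$ and $f_n$ are nonzero-divisors in $B$, the proof of Theorem~\ref{T:globalbun} carries over verbatim (the two injectivity lemmas there use only that $f_0,f_n$ are nonzero-divisors) and identifies $\Gamma(S_F,\mathcal{O}_{S_F})$ with the $B$-subalgebra of $\mathrm{Frac}(B)(\theta)/F(\theta)$ having $B$-basis $\zeta_0,\ldots,\zeta_{n-1}$ as in Equation~\eqref{E:basis}; by the universal version of Nakagawa's multiplication table (cf.\ the Remark in Section~\ref{S:RI}) this subalgebra has the products of Equation~\eqref{E:mult}, so it is $\fR{F}$ by definition.

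To descend, I would note first that $\fRf\isom\fR{F}\otimes_B A$ as rings, since the entries of the multiplication table are polynomials in the $f_i$ with integer coefficients. On the hypercohomology side, the sheaves $\mathcal{O}(-n)$ and $\mathcal{O}$ on $\pi:\P^1_B\to\Spec B$ have cohomology that is free (or zero) in every degree ($\pi_*\mathcal{O}=B$, $R^1\pi_*\mathcal{O}=0$, $\pi_*\mathcal{O}(-n)=0$, $R^1\pi_*\mathcal{O}(-n)\isom B^{n-1}$), so the complex $R\pi_*(\mathcal{O}(-n)\to\mathcal{O})$ is representable by a bounded complex of finite free $B$-modules whose formation commutes with $-\otimes_B A$ (by cohomology-and-base-change, or directly from the \^Cech complex for the cover $\{U_x,U_y\}$). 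Applying this base change yields a ring isomorphism $H^0 R\pi_*(\mathcal{O}(-n)\xrightarrow{f}\mathcal{O})\isom\fR{F}\otimes_B A=\fRf$, which is what we want.

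The main obstacle is checking that the product defined on the complex $\mathcal{O}(-n)\to\mathcal{O}$ in the paragraph preceding the theorem is carried by the quasi-isomorphism with $\mathcal{O}_{S_F}$ to the ordinary ring structure on $\mathcal{O}_{S_F}$, and that both this quasi-isomorphism and the base-change isomorphism respect the product. The first point is morally automatic (the degree-zero piece of the product is just multiplication on $\mathcal{O}$, which descends to the quotient $\mathcal{O}/F\cdot\mathcal{O}(-n)$, while the $\mathcal{O}(-n)\tensor\mathcal{O}(-n)\to 0$ piece becomes trivial on the cokernel); to make it rigorous I would work with a concrete model for $R\pi_*$, namely the \^Cech double complex for the cover $\{U_x,U_y\}$, on which the product is a cup product that is visibly compatible with both the quasi-isomorphism and with $-\otimes_B A$.
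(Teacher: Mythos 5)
Your proposal is correct and follows essentially the same route as the paper: verify the isomorphism for the universal form over $\Z[f_0,\dots,f_n]$ by reducing the hypercohomology to $\Gamma(\mathcal{O}_{S_f})$ (where $f$ is a nonzero-divisor) and invoking the proof of Theorem~\ref{T:globalbun}, then descend to an arbitrary form by noting that both the multiplication-table construction of $\fRf$ and the formation of $H^0R\pi_*(\mathcal{O}(-n)\stackrel{f}{\ra}\mathcal{O})$ commute with base change (the paper cites Theorem~\ref{T:basechpf} for the latter, which is the same cohomology-and-base-change argument you sketch via flatness of the relevant $H^iR\pi_*$). Your added care about product compatibility is handled in the paper by the remark in Section~\ref{SS:hyperco} that the homotopy equivalence respects the product structure, so nothing is missing.
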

\begin{proof}
The proof of Theorem~\ref{T:globalbun} 
shows that $\fRf\isom H^0R\pi_* \left(\mathcal{O}(-n) \stackrel{f}{\ra} \mathcal{O}\right)$
for the universal form $f=f_0x^n +f_1 x^{n-1}y +\dots +f_ny^n$ with 
coefficients in $\Z[f_0,\dots,f_n]$.  Since both the construction of
\fRf from the multiplication table in Section~\ref{S:tables} and
the formation of $ H^0R\pi_* \left(\mathcal{O}(-n) \stackrel{f}{\ra} \mathcal{O}\right)$
commute with base change (as we will see in Theorem~\ref{T:basechpf}), and every
form $f$ is a base change of the universal form, the theorem follows.
\end{proof}

We have a similar description of the
$\fRf$ ideal classes (or modules) $\fIf_k$.
We can define $\fRf$-modules for 
all $k\in\Z$: 
$$
 H^0R\pi_* \left(\mathcal{O}(-n+k) \stackrel{f}{\ra} \mathcal{O}(k)\right).
$$
  (Here, $\mathcal{O}(k)$ is in degree 0 in the above complex.)
  The $\fRf$-module structure on
$$H^0R\pi_* \left(\mathcal{O}(-n+k) \stackrel{f}{\ra} \mathcal{O}(k)\right)$$ is given by the following
action of the complex $\mathcal{O}(-n) \stackrel{f}{\ra} \mathcal{O}$
on the complex 
 $\mathcal{O}(-n+k) \stackrel{f}{\ra} \mathcal{O}(k):$
 \begin{align*}
 \mathcal{O}\tensor \mathcal{O}(k) \ra \mathcal{O}(k) \qquad \mathcal{O}\tensor \mathcal{O}(-n+k) \ra \mathcal{O}(-n+k)\\
 \mathcal{O}(-n)\tensor \mathcal{O}(k) \ra \mathcal{O}(-n+k) \qquad \mathcal{O}(-n)\tensor \mathcal{O}(-n+k) \ra 0,
 \end{align*}
 where all maps are the natural ones.

\begin{theorem}\label{T:Icon}
For all binary $n$-ic forms $f$ and $-1\leq k\leq n-1$ 
we have
$$
\fIf_k\isom H^0R\pi_* \left(\mathcal{O}(-n+k) \stackrel{f}{\ra} \mathcal{O}(k)\right)
$$
as $\fRf$-modules.  
\end{theorem}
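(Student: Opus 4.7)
The plan is to adapt the proof of Theorem~\ref{T:twocon}. First, I would establish the isomorphism for the universal form over $\Z[f_0,\ldots,f_n]$, then descend to an arbitrary form via base change.

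Over the universal base, $f$ is a nonzerodivisor, so $\mathcal{O}(-n+k)\stackrel{f}{\ra}\mathcal{O}(k)$ is injective and the two-term complex is quasi-isomorphic to its cokernel $\OSf(k)$ placed in degree $0$. This quasi-isomorphism is compatible with the product pairing of the complex $\mathcal{O}(-n)\stackrel{f}{\ra}\mathcal{O}$: in each slot the pairing is the natural $\mathcal{O}$-module action, and passing to the cokernel yields the usual $\OSf$-action on $\OSf(k)$. Applying $R\pi_*$ and taking $H^0$ gives
\[ H^0R\pi_*\left(\mathcal{O}(-n+k)\stackrel{f}{\ra}\mathcal{O}(k)\right)\isom H^0R\pi_*(\OSf(k))=\pi_*(\OSf(k))=\Gamma(\OSf(k)), \]
compatibly with the module structure over $\pi_*(\OSf)$. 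By Theorem~\ref{T:globalbun}, the right-hand side is identified with $\If^k$ for $0\leq k\leq n-1$ (respectively $\If^\#$ for $k=-1$) as an $\Rf$-module, and by Theorem~\ref{T:twocon} the source ring is $\fRf$; this yields the desired isomorphism for the universal form.

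To handle an arbitrary form, I would invoke the base change statement (Theorem~\ref{T:basechpf}). Both sides commute with base change in the coefficient ring: the left side because the action tables of Section~\ref{S:tables} defining $\fIf_k$ have entries that are integer polynomials in the $f_i$, and the right side by the base change property of hypercohomology. Since every binary $n$-ic form is a specialization of the universal one, the universal isomorphism transports functorially to give the claim in general.

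The main obstacle will be confirming the match of module structures in the universal case. Concretely, one must verify that under $\Gamma(\OSf(k))\isom \If^k$ furnished in the proof of Theorem~\ref{T:globalbun}, the geometric $\Z$-basis $x^k, x^{k-1}y,\ldots,y^k, z_1,\ldots,z_{n-1-k}$ corresponds to the $\Z$-basis of Equation~\eqref{E:Ibasis} used to define the action table of $\fIf_k$, and similarly for the $k=-1$ case via Equation~\eqref{E:Isbasis}. Once this basis comparison is in hand, the $\Gamma(\OSf)$-action on $\Gamma(\OSf(k))$ is represented by exactly the polynomials in the $f_i$ that define the $\fRf$-action on $\fIf_k$, and the isomorphism of $\fRf$-modules follows at once.
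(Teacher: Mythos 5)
Your proposal is correct and follows the same route as the paper: verify the isomorphism for the universal form over $\Z[f_0,\dots,f_n]$ (where $f$ is a nonzerodivisor, so the complex is quasi-isomorphic to $\OSf(k)$ and Theorem~\ref{T:globalbun} supplies the identification with $\If^k$, resp.\ $\If^\#$), then transport it to arbitrary forms using that both constructions commute with base change. The basis comparison you flag as the remaining obstacle is exactly what the proof of Theorem~\ref{T:globalbun} already provides.
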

\begin{proof}
The proof is that same as that of Theorem~\ref{T:twocon}.
\end{proof}

We have the following nice corollary of Theorems~\ref{T:twocon}
and \ref{T:Icon}.

\begin{corollary}\label{C:fGinv}
The ring $\fRf$ and the $\fRf$-module $\fIf$ are $\GZ$
invariants of binary $n$-ic forms $f$.
\end{corollary}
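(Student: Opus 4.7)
The plan is to deduce both invariance statements directly from the hypercohomology descriptions established in Theorems~\ref{T:twocon} and \ref{T:Icon}. Once $\fRf$ and $\fIf_k$ are realized via an intrinsic derived functor construction on $\P^1_\Z$, $\GZ$-invariance becomes a transparent consequence of functoriality and does not require any further calculation in bases.

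Explicitly, any $g \in \GZ$ induces a $\Spec\Z$-automorphism $\tilde g \colon \P^1_\Z \to \P^1_\Z$ satisfying $\pi \circ \tilde g = \pi$. Because $\tilde g$ is an automorphism, the pullback $\tilde g^*$ is an exact tensor functor and sends $\mathcal{O}(k)$ to a sheaf canonically isomorphic to $\mathcal{O}(k)$. Under this identification, the complex $\mathcal{O}(-n) \xrightarrow{f} \mathcal{O}$ pulls back to the complex $\mathcal{O}(-n) \xrightarrow{g \cdot f} \mathcal{O}$, where $g \cdot f$ denotes the transform of $f$ under the usual $\GZ$-action on binary $n$-ic forms; similarly $\mathcal{O}(-n+k) \xrightarrow{f} \mathcal{O}(k)$ pulls back to $\mathcal{O}(-n+k) \xrightarrow{g\cdot f} \mathcal{O}(k)$. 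Since $\tilde g^*$ is a tensor functor, these identifications are automatically compatible with the product structure on $\mathcal{O}(-n) \xrightarrow{f} \mathcal{O}$ and its action on $\mathcal{O}(-n+k) \xrightarrow{f} \mathcal{O}(k)$, because those structures are assembled from the natural pairings among the sheaves $\mathcal{O}(j)$, all of which are preserved by any tensor pullback.

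Because $\tilde g$ is an isomorphism, the unit $\mathrm{id} \to R\tilde g_* \tilde g^*$ is a natural isomorphism; combined with $\pi \circ \tilde g = \pi$ this gives $R\pi_* \circ \tilde g^* \cong R\pi_*$. Applying $H^0$ to the two descriptions of the pulled-back complexes yields ring and module isomorphisms
\[
\fR{g\cdot f} \cong \fRf \qquad \text{and} \qquad \fI{g\cdot f}_k \cong \fIf_k
\]
for every $g \in \GZ$ and every $-1 \le k \le n-1$; specialising to $k=1$ delivers the stated invariance of $\fIf$.

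The main step that needs checking — and it is not really an obstacle — is the identification of $\tilde g^*(\mathcal{O}(-n) \xrightarrow{f} \mathcal{O})$ with $\mathcal{O}(-n) \xrightarrow{g\cdot f} \mathcal{O}$ as complexes with multiplicative structure; but this is immediate once one recalls that the $\GZ$-action on $\Sym^n V$ is by definition the one induced by the pullback action on sections of $\mathcal{O}(n)$ on $\P(V)$. Everything else in the argument is purely formal.
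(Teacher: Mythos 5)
Your proposal is correct and is exactly the argument the paper intends: the corollary is stated with no written proof precisely because, once $\fRf$ and $\fIf$ are identified with $H^0R\pi_*$ of the complexes $\mathcal{O}(-n+k)\xrightarrow{f}\mathcal{O}(k)$ (Theorems~\ref{T:twocon} and \ref{T:Icon}), invariance follows from the functoriality of pullback along the $\P^1_\Z$-automorphism induced by $g\in\GZ$ and the identity $\pi\circ\tilde g=\pi$, just as you spell out. You have simply made explicit the details the paper leaves implicit.
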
 

\section{Constructing rings and modules from a binary form over an arbitrary base}\label{S:SConst}

So far, we have mainly considered binary  forms with coefficients
in $\Z$.  We will now develop our theory over an arbitrary base scheme
$\BS$.  When $\BS=\Spec \BR$ we will sometimes say we are working over a base ring
$\BR$ and we will replace $\mathcal{O}_\BS$-modules with
their corresponding $\BR$-modules. 

\begin{notation}
For an $\OS$-module $M$, we write $M^*$ to denote the $\OS$ dual
$\mathcal{H}om_\OS(M,\OS)$. If $\mathcal{F}$ is a sheaf, we use $s\in\mathcal{F}$ to denote that $s$ is a global section of
$\mathcal{F}$.   We use $\Sym^n M$ to denote the usual
quotient of $M^{\tensor n}$, and $\Sym_n M$ to denote the submodule of symmetric elements of $M^{\tensor n}$. 
We have $(\Sym_n M)^* \cong \Sym^n M^*$ for locally free $\OS$-modules $M$.
\end{notation}

 A \emph{binary $n$-ic form}
over
$\BS$ is a pair $(f,V)$ where
$V$ is a locally free $\mathcal{O}_\BS$-module of rank 2 and
$f\in\Sym^n V$. An isomorphism of
binary $n$-ic forms $(f,V)$ and $(f,V')$ is given 
by an $\mathcal{O}_\BS$-module isomorphism $V\isom V'$ which takes
$f$ to $f'$.
We call $f$ a \emph{binary form} when $n$ is clear from context or not
relevant.  If $V$ is the free
$\mathcal{O}_\BS$-module $\mathcal{O}_\BS x \oplus \mathcal{O}_\BS y$ we call $f$
a
\emph{free} binary form.

Given a binary form $f\in \Sym^n V$ over a base scheme $\BS$,
the form $f$ determines a subscheme $\Sf$ of $\P(V)$
(where we define $\P(V)=\Proj \Sym^* V$).
Let $\pi: \P(V) \ra S$.  Let $\mathcal{O}(k)$ denote
the usual sheaf on $\P(V)$ and $\OSf(k)$ denote the pullback
of $\mathcal{O}(k)$ to
\Sf. Then we can define the $\mathcal{O}_\BS$-algebra
\begin{equation}
\fRf:=H^0R\pi_* \left(\mathcal{O}(-n) \stackrel{f}{\ra} \mathcal{O}\right),
\end{equation}
where $\mathcal{O}(-n) \stackrel{f}{\ra} \mathcal{O}$ is a complex in degrees -1 and 0.
(In section~\ref{SS:hyperco} this point of view  is worked out
in detail over $\BS=\Spec \Z$.)  The product of $\fRf$ is 
given
by the natural product of the complex $\mathcal{O}(-n) \stackrel{f}{\ra} \mathcal{O}$
with itself and the $\mathcal{O}_\BS$-algebra structure is induced from the map of $\mathcal{O}$
as a complex in degree 0 to the complex $\mathcal{O}(-n) \stackrel{f}{\ra} \mathcal{O}$.

When $\mathcal{O}(-n) \stackrel{f}{\ra} \mathcal{O}$ is injective,  we have $$\fRf=\pi_*(\OSf),$$
as in Section~\ref{SS:hyperco}.
Similarly, we can define an $\fRf$-module 
\begin{equation}
\fIf_k:=H^0R\pi_* \left(\mathcal{O}(-n+k) \stackrel{f}{\ra} \mathcal{O}(k)\right),
\end{equation}
for all $k\in \Z$. 
 Let $\fIf^\#:=\fIf_{-1}$ and $\fIf:=\fIf_1$.
 Clearly $\fRf$ and $\fIf_k$ are invariant
under the $\GL(V)$ action on forms in
$\Sym^n V$.  
Again, when $\mathcal{O}(-n+k) \stackrel{f}{\ra} \mathcal{O}(k)$ is injective,  we have $$\fIf_k=\pi_*(\OSf(k))$$ for all $k\in\Z$.

\begin{example}
If $\BR=\Z\oplus \Z$ and $(f_i)=\Z\oplus\{0\}$, then 
in $\P^1_{\Z\oplus \Z}$
over the
first $\Spec \Z$ the form $f$ cuts out $\Spec R_{p(f)}$, where
$p(f)$ is the projection of $f$ onto the first factor
of $(\Z\oplus \Z) [x,y]$.  
Over the second copy of $\Spec \Z$, the form $f$ is 0 and
cuts out all of $\P^1_\Z$.  Here
$\mathcal{O}(-n)\stackrel{f}{\ra}\mathcal{O}$ is not injective because $f$ is a 0 divisor.
Thus the ring $\fRf:=H^0R\pi_* (\mathcal{O}(-n) \stackrel{f}{\ra} \mathcal{O})$ is not just the global functions of
$\Sf$ but also has a contribution from $\ker (\mathcal{O}(-n)\stackrel{f}{\ra}\mathcal{O} )$.
\end{example}

Unlike pushing forward $\OSf(k)$ to $S$, the constructions of $\fRf$ and $\fIf_k$ for $-1\leq k\leq n-1$
commutes with base change.

\begin{theorem}\label{T:basechpf}
Let $f\in \Sym^n V$ be a binary form over a base scheme $\BS$.  
The construction of $\fRf$ and $\fIf_k$ for $-1\leq k\leq n-1$ commutes with base change.
More precisely, let
$\phi : T \ra \BS$ be a map of schemes.
Let $\phi^*f\in \Sym^n \phi^*V$ be the pullback of $f$.  Then the natural
map from cohomology
$$\fRf \tensor \mathcal{O}_T \ra \fR{\phi^*f}$$ is an isomorphism of $\mathcal{O}_T$-algebras.
Also, for $-1\leq k\leq n-1$, the 
natural
map from cohomology
$$\fIf \tensor \mathcal{O}_T \ra \fI{\phi^*f}$$ is an isomorphism of $\fR{\phi^*f}$-modules
(where the $\fR{\phi^*f}$-module structure on $\fIf \tensor \mathcal{O}_T$ comes from
the $(\fRf \tensor \mathcal{O}_T)$-module structure.
\end{theorem}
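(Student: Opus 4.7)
My plan is to realize each $\fIf_k$ with $-1 \le k \le n-1$ (specializing to $\fRf$ at $k = 0$) as the middle term of a canonical short exact sequence
$$0 \lra \pi_*\mathcal{O}(k) \lra \fIf_k \lra R^1\pi_*\mathcal{O}(-n+k) \lra 0$$
whose flanking terms are locally free $\mathcal{O}_\BS$-modules that visibly commute with base change, and then to apply the five lemma.

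First I would derive this short exact sequence. The brutal (stupid) truncation of the two-term complex $C^\bullet := [\mathcal{O}(-n+k) \stackrel{f}{\lra} \mathcal{O}(k)]$ in degrees $-1,0$ gives a short exact sequence of complexes $0 \to \mathcal{O}(k) \to C^\bullet \to \mathcal{O}(-n+k)[1] \to 0$, hence a distinguished triangle in $D(\P(V))$. Applying $R\pi_*$ and extracting the cohomology long exact sequence yields
$$\pi_*\mathcal{O}(-n+k) \stackrel{f}{\lra} \pi_*\mathcal{O}(k) \lra \fIf_k \lra R^1\pi_*\mathcal{O}(-n+k) \stackrel{f}{\lra} R^1\pi_*\mathcal{O}(k).$$
Since $\pi: \P(V) \to \BS$ is a $\P^1$-bundle, the standard cohomology of line bundles gives $\pi_*\mathcal{O}(-n+k) = 0$ (because $-n+k \le -1$) and $R^1\pi_*\mathcal{O}(k) = 0$ (because $k \ge -1$), so the long exact sequence collapses to the displayed short exact sequence. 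The outer terms $\pi_*\mathcal{O}(k) \isom \Sym^k V$ and $R^1\pi_*\mathcal{O}(-n+k)$ (the appropriate Serre dual of $\Sym^{n-k-2} V$) are locally free of the expected ranks.

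For the base change statement, let $\phi: T \to \BS$ and let $\phi': \P(\phi^*V) \to \P(V)$ be the pulled-back projective bundle map. Each term of $C^\bullet$ is flat over $\BS$, so $\phi'^* C^\bullet$ is the corresponding two-term complex for $\phi^*f$, and flat base change supplies isomorphisms $\phi^* R\pi_*\mathcal{O}(j) \isom R\pi'_*\phi'^*\mathcal{O}(j)$ for $j = k$ and $j = -n+k$. The functoriality of the brutal truncation triangle and of the cohomology long exact sequence then produces a morphism from the $\phi^*$-pullback of the short exact sequence above to the analogous sequence for $\phi^*f$; the middle vertical arrow of this morphism is exactly the natural map $\fIf \tensor \mathcal{O}_T \to \fI{\phi^*f}$ (and similarly for $\fRf$) described in the statement, while the outer two are the base change isomorphisms just noted. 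The five lemma finishes the argument. The same functoriality identifies the product on $C^\bullet$ and the action of $C^\bullet$ on each twist with their pullbacks, so these isomorphisms are automatically isomorphisms of $\mathcal{O}_T$-algebras (for $\fRf$) and of $\fR{\phi^*f}$-modules (for $\fIf_k$).

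The only conceptually non-routine point is that the formation of the truncation triangle $\mathcal{O}(k) \to C^\bullet \to \mathcal{O}(-n+k)[1]$ commutes with derived pullback; but since every sheaf appearing in $C^\bullet$ is flat, $L\phi'^* C^\bullet = \phi'^* C^\bullet$ is formed in the same way, and the entire argument reduces to base change for the individual $R^i\pi_*\mathcal{O}(j)$, which is the standard calculation on a projective bundle.
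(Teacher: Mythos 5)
Your proof is correct and follows essentially the same route as the paper: both derive the brutal-truncation short exact sequence of complexes, use the vanishing of $\pi_*\mathcal{O}(-n+k)$ (for $k\le n-1$) and $R^1\pi_*\mathcal{O}(k)$ (for $k\ge -1$) to collapse the hypercohomology long exact sequence to a short exact sequence with locally free outer terms, and deduce base change from that. The only divergence is the final step: the paper concludes by citing Grothendieck's flatness criterion for cohomology and base change (EGA III, Cor.\ 6.9.9), whereas you give a self-contained five-lemma argument using flat base change on the projective bundle and the flatness of the quotient term --- an equally valid and slightly more elementary finish.
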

\begin{proof}
The key to this proof is to compute all cohomology of the pushforward 
of the complex $\mathcal{C}(k): \mathcal{O}(-n+k) \stackrel{f}{\ra} \mathcal{O}(k)$.
This can be done using the long exact sequence of cohomology from the short exact sequence
of complexes given in Equation~\eqref{E:SEScomplexes} in the next section.  
In particular, $\mathcal{C}(k)$ 
 does not have any cohomology
in degrees other than 0.  
Since $k\leq n-1$, we have that 
$H^{0}R\pi_*(\mathcal{O}(-n+k))=0$
and thus
$H^{-1}R\pi_*(\mathcal{C}(k))=0$.
Since, $k\geq -1$ we have that $H^{1}R\pi_*(\mathcal{O}(k))=0$ and thus
$H^{1}R\pi_*(\mathcal{C}(k))=0$.  Moreover, in Section~\ref{SS:LES}, we see that
$H^{0}R\pi_*(\mathcal{C}(k))$ is locally free.  Thus since all $H^{i}R\pi_*(\mathcal{C}(k))$ are flat,
by \cite[Corollaire 6.9.9]{EGA3}, we have that cohomology and base change commute.
\end{proof}

In the case that $f$ is a free form, we could have defined
$\fRf$ as a free rank $n$ $\OS$-module using the multiplication table given
by Equation~\eqref{E:mult} and $\fIf_k$ for $-1\leq k \leq n-1$ as a free rank $n$ $\OS$-module
using the action tables for the 
Equation~\eqref{E:Ibasis} and \eqref{E:Isbasis} bases.
(See Section~\ref{S:tables} for more details.)
Both the constructions from hypercohomology described above and
from the multiplication and action tables commute with base change.
Thus by verification on the universal form (the proof
of Theorem~\ref{T:globalbun}
works over 
$\Z[f_0,\dots,f_n]$) we see, as in Theorem~\ref{T:twocon}, that
for free binary forms and $-1\leq k \leq n-1$,
these two definitions of $\fRf$ and $\fIf_k$ agree.

For any $l$, we can also formulate this theory for 
\emph{$l$-twisted binary forms}  $f\in\Sym^n V\tensor (\wedge^2 V)^{\tensor l}$,
where
\begin{equation}
\fRf:=H^0R\pi_* \left(\mathcal{O}(-n)\tensor (\pi^*\wedge^2 V)^{\tensor -l} \stackrel{f}{\ra} \mathcal{O}\right),
\end{equation}
and 
\begin{equation}
\fIf_k:=H^0R\pi_* \left(\mathcal{O}(-n+k)\tensor (\pi^*\wedge^2 V)^{\tensor -l} \stackrel{f}{\ra} \mathcal{O}(k)\right)\,
\end{equation}
or
\begin{equation}
\fIf_k':=H^0R\pi_* \left(\mathcal{O}(-n+k)\tensor (\pi^*\wedge^2 V) \stackrel{f}{\ra} \mathcal{O}(k)\tensor (\pi^*\wedge^2 V)^{\tensor l+1}\right).
\end{equation}
By the projection formula, $\fIf_k'=\fIf_k\tesnor (\wedge^2 V)^{\tensor l+1}$.
By an argument analogous to that of Theorem~\ref{T:basechpf} we find that these constructions also
commute with base change for $-1\leq k \leq n-1$.  
Note that since $\Sym ^n V \tensor (\wedge^2 V)^{\tensor l}\isom \Sym ^n V^* \tensor (\wedge^2 V^*)^{\tensor -n-l}$
(see Lemmas~\ref{P:symline} and \ref{L:rk2} in the Appendix),
the theory of $l$-twisted binary $n$-ic forms is equivalent to the theory of $(-n-l)$-twisted binary $n$-ic forms.

\subsection{Long exact sequence of cohomology}\label{SS:LES}
In this section, we use the long exact sequence of cohomology to find the $\OS$-module structures of the rings and modules we have constructed, and
important relationships between these $\OS$-module structures.
From the short exact sequence of complexes in degrees
-1 and 0
\begin{equation}\label{E:SEScomplexes}
\begin{CD}
 @. \mathcal{O}(k) \\
@. @VVV \\
\mathcal{O}(-n+k) @>f>> \mathcal{O}(k) \\
@VVV @.  \\
\mathcal{O}(-n+k) @. 
\end{CD} 
\end{equation}
(where each complex is on a horizontal line),
we have the long exact sequence of cohomology
\begin{align*}
H^0R\pi_*\mathcal{O}(-n+k)\ra H^0R\pi_*\mathcal{O}(k) \ra 
H^0R\pi_* &\left(\mathcal{O}(-n+k) \stackrel{f}{\ra}  \mathcal{O}(k)\right)  \\
&\ra H^1R\pi_*\mathcal{O}(-n+k)
\ra H^1R\pi_*\mathcal{O}(k).
\end{align*}
For $k\leq n-1$, we have $H^0R\pi_*\mathcal{O}(-n+k)=0$ and for $k\geq -1$
we have $H^1R\pi_*\mathcal{O}(k)=0$. Also,
$H^0R\pi_*\mathcal{O}(k)=\Sym^k V$ and $H^1R\pi_*\mathcal{O}(-n+k)=(\Sym^{n-k-2} V)^* \tesnor (\wedge^2 V)^*.$
Thus for $1\leq k\leq n-1$
and a binary form $f\in\Sym^n V$, we have the exact sequence
\begin{equation}\label{E:Iexact}
0\ra \Sym^k V \ra 
\fIf_k \ra (\Sym^{n-k-2} V)^* \tesnor (\wedge^2 V)^*
\ra 0.
\end{equation}
Thus $\fIf_k$ has
a
canonical rank $k+1$ $\mathcal{O}_\BS$-module inside of it
(coming from the global sections $x^k,x^{k-1}y,\dots,y^k$ of $\mathcal{O}(k)$), and
a canonical rank $n-k-1$ $\mathcal{O}_\BS$-module quotient.

So we see, for example, that as an $\mathcal{O}_\BS$-module
$$
\fRf/\mathcal{O}_\BS \isom (\Sym^{n-2} V)^* \tesnor (\wedge^2 V)^*.
$$

Note that if we make the corresponding exact sequence for an 
$l$-twisted binary form  $f\in\Sym^n V\tensor (\wedge^2 V)^{\tensor l}$
we have
\begin{equation}\label{E:Iexactl}
0\ra \Sym^k V \ra 
\fIf_k \ra (\Sym^{n-k-2} V)^* \tesnor (\wedge^2 V)^{\tensor -l-1}
\ra 0
\end{equation}
or
\begin{equation}\label{E:Iexactlprime}
0\ra \Sym^k V \tesnor (\wedge^2 V)^{\tensor l+1} \ra 
\fIf_k' \ra (\Sym^{n-k-2} V)^* 
\ra 0.
\end{equation}

In Section~\ref{S:RI} we have given a multiplication table
for an explicit basis of $\fRf$ and an (implicit) action table
for an explicit basis of $\fIf_k$.  One naturally wonders how those
bases relate to the exact sequences that we have just found.  
Consider the universal form $f$ over the base ring $B=\Z[f_0,\dots,f_n]$.  
We can use the concrete construction of $\fRf$ and $\fIf_{k}$ in
Section~\ref{S:RI}.  If $K$ is the fraction field of $B$, then the concrete constructions of 
$\fRf$ and $\fIf_{k}$ lie in $\Qf:=K(\theta)/(f_0\theta^n +f_1 \theta^{n-1} +\dots +f_n)$
and are given by Equations~\eqref{E:basis} and \eqref{E:Ibasis}.

\begin{proposition}\label{P:identifybasis}
 For the universal form $f$, where $V$ is a free module on $x$ and $y$,
in the exact sequence of Equation~\eqref{E:Iexactl} or Equation~\eqref{E:Iexactlprime} 
(with $\wedge^2 V$ trivialized by the basis element $x\wedge y$)
we have that 
$$x^iy^{k-i} \in \Sym^k V \text{ is identified with } \theta^i\in \fIf_k\text{ for }0\leq i\leq k$$
and
$$
\text{the dual basis to }
x^{n-k-i-1}y^{i-1} \in \Sym^{n-k-2} V \text{ is identified with } $$
$$\zeta_{k+i} \in \fIf_k\text{ for }1\leq i\leq n-k-1.$$
\end{proposition}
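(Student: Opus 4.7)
The plan is to work explicitly with Cech cohomology for the standard affine cover $\{U_x, U_y\}$ of $\P(V)$, for the universal form $f$ over $B=\Z[f_0,\dots,f_n]$ with $V=Bx\oplus By$. Since both the exact sequence~\eqref{E:Iexactl} and the concrete basis of $\fIf_k$ from~\eqref{E:Ibasis} commute with base change (Theorem~\ref{T:basechpf}), verifying the proposition for the universal form suffices.

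For the first identification, the map $\Sym^k V = H^0R\pi_*\mathcal{O}(k)\to \fIf_k$ in the long exact sequence is induced by the inclusion of $\mathcal{O}(k)$ (in degree $0$) into the two-term complex $\mathcal{O}(-n+k)\stackrel{f}{\to}\mathcal{O}(k)$. For the universal form $f$ is a non-zero-divisor, so this complex is quasi-isomorphic to $\mathcal{O}_\Sf(k)$ in degree $0$, and the map becomes the restriction $H^0(\mathcal{O}(k))\to H^0(\mathcal{O}_\Sf(k))\isom \fIf_k$. Under the identification from the proof of Theorem~\ref{T:globalbun} (given by $x\mapsto \theta$, $y\mapsto 1$), the monomial $x^iy^{k-i}$ goes to $\theta^i$, matching the basis element in~\eqref{E:Ibasis}.

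For the second identification, I compute the connecting homomorphism $\fIf_k\to H^1R\pi_*\mathcal{O}(-n+k)$ on $\zeta_{k+i}$ via Cech cohomology. From the proof of Theorem~\ref{T:globalbun}, $\zeta_{k+i}$ corresponds to the global section $z_i=f_0x^{k+i}y^{-i}+\dots+f_{k+i-1}xy^{k-1}$ of $\mathcal{O}_\Sf(k)$, which admits two natural lifts to $\mathcal{O}(k)$: the expression $s_y:=z_i$ is a valid section on $U_y$, while $s_x:=-(f_{k+i}y^k+\dots+f_nx^{k+i-n}y^{n-i})$ is a valid section on $U_x$. A direct telescoping computation gives $s_y-s_x = x^{k+i-n}y^{-i}\cdot f$, so in the Cech-hypercohomology total complex the class $\zeta_{k+i}$ is represented by $((s_y,s_x),\, x^{k+i-n}y^{-i})$, and the connecting map sends $\zeta_{k+i}$ to $[x^{k+i-n}y^{-i}]\in H^1(\mathcal{O}(-n+k))$.

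Finally, Serre duality realizes the canonical isomorphism $H^1(\mathcal{O}(-n+k))\isom(\Sym^{n-k-2}V)^*\tensor(\wedge^2 V)^*$ via the multiplication pairing into $H^1(\mathcal{O}(-2))\isom(\wedge^2 V)^*$ (the trace identifying $[x^{-1}y^{-1}]\leftrightarrow(x\wedge y)^*$). In Cech terms, $[x^ay^b]$ (with $a+b=k-n$, $a,b\leq -1$) pairs with $x^\ell y^{n-k-2-\ell}\in\Sym^{n-k-2}V$ nontrivially iff $\ell=-1-a$, yielding $[x^{-1}y^{-1}]$. Substituting $a=k+i-n$, $b=-i$ gives $\ell=n-k-i-1$ and paired monomial $x^{n-k-i-1}y^{i-1}$; hence $\zeta_{k+i}$ is identified with the dual basis element to $x^{n-k-i-1}y^{i-1}$ (after trivializing $\wedge^2V$ via $x\wedge y$), as claimed. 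The main delicacy is tracking sign conventions in the total-complex differential and the Serre trace, which must be fixed consistently but do not alter the monomial identification.
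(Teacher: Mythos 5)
Your proposal is correct and follows essentially the same route as the paper: the first identification via restriction of global sections under $x\mapsto\theta$, $y\mapsto 1$, and the second via the \v{C}ech computation of the connecting map sending $\zeta_{k+i}$ (i.e.\ $z_i$) to $\left[x^{k+i-n}y^{-i}\right]\in H^1(\mathcal{O}(-n+k))$, followed by the standard cup-product/Serre pairing with $x^{n-k-i-1}y^{i-1}$. Your explicit telescoping identity $s_y-s_x=x^{k+i-n}y^{-i}f$ is exactly the paper's statement that $z_i$ maps to $f/(x^{n-k-i}y^i)$ on the overlap, so no further comparison is needed.
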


\begin{proof}
For the universal form, the cohomological construction simplifies.  We can
replace the complex $\mathcal{O}(-n+k)\ra\mathcal{O}(k)$ on $\P^1_B$ with the single
sheaf $\mathcal{O}(k)/f(\mathcal{O}(-n+k))$.  We can then replace $H^i R\pi_*$ with $H^i$
since the base is affine.  The short exact sequence of complexes
in Equation~\eqref{E:SEScomplexes} then simplifies to the short
exact sequence of sheaves
$$
0\ra \mathcal{O}(-n+k) \stackrel{f}{\ra} \mathcal{O}(k) \ra \mathcal{O}(k)/f(\mathcal{O}(-n+k)) \ra 0,
$$
which gives the same long exact sequence leading to Equation~\eqref{E:Iexact}.
The identification of $\fIf_k$ with global sections is at 
the end of proof of Theorem~\ref{T:globalbun}, and from that it is easy to see that
 the map $H^0(\P^1_B,\mathcal{O}(k))\ra H^0(\P^1_B,\mathcal{O}(k)/f(\mathcal{O}(-n+k)))=\fIf_k$
sends $x^iy^{k-i}\mapsto \theta^i$.  To compute the $\delta$ map 
$\fIf_k\ra H^1(\P^1_B,\mathcal{O}(-n+k))$, we will next use Cech cohomology for
the usual affine cover of $\P^1$ and the $\delta$ map is the snake lemma map
between rows of the Cech complexes.

In the notation of Theorem~\ref{T:globalbun}, 
the element $\zeta_{k+i}$ is identified with the global section $z_i$.
The global function 
$z_i$ pulls back to $z_i\in \Gamma(U_x,  \mathcal{O}(k)) \times\Gamma(U_y,\mathcal{O}(k))$
which maps to $f/(x^{n-k-i}y^{i})\in \Gamma(U_x \cap U_y,  \mathcal{O}(k))$.
This pulls back to $1/(x^{n-k-i}y^{i})\in \Gamma(U_x \cap U_y,  \mathcal{O}(-n+k))$,
which in the standard pairing of the cohomology of projective space
(e.g. in \cite[III, Theorem 5.1]{Hartshorne})
pairs with $x^{n-k-i-1}y^{i-1}\in H^0(\P^1_B,\mathcal{O}(n-k-2))\isom \Sym_{n-k-2} V$.
\end{proof}

Since the ring $\fRf$ acts on $\fIf_k$, it is natural to want to understand this action
in terms of the exact sequences of Equation ~\eqref{E:Iexactl}.  We have the following
description, which can be proved purely formally by the cohomological constructions of everything
involved.  Alternatively, with the concrete description of the basis elements
in Proposition~\ref{P:identifybasis}, one could prove the following by computation.

\begin{proposition}\label{P:rightaction}
 The map $\fRf/\OS\tensor \Sym^k V \ra \Sym_{n-k-2} V^*\tesnor (\wedge^2 V)^{\tensor -l-1}$
given by the action of $\fRf$ on $\fIf_k$ and the exact sequence of Equation ~\eqref{E:Iexactl}
is identified with the natural map (see Lemma~\ref{L:Syminside} in the Appendix)
$$ \Sym_{n-2} V^* \tesnor (\wedge^2 V)^{\tensor -l-1}\tensor \Sym^k V \ra \Sym_{n-k-2} V^*\tesnor (\wedge^2 V)^{\tensor -l-1}$$
under the identification $R/\OS \isom \Sym_{n-2} V^*\tesnor (\wedge^2 V)^{\tensor -l-1}$ of
Equation~\eqref{E:Iexactl}.
 The map $\fRf/\OS\tensor \Sym^k V \tesnor (\wedge^2 V)^{\tensor l+1}\ra \Sym_{n-k-2} V^*$
given by the action of $\fRf$ on $\fIf_k'$ and the exact sequence of Equation ~\eqref{E:Iexactlprime}
is identified with the natural map (see Lemma~\ref{L:Syminside} in the Appendix)
$$ \Sym_{n-2} V^* \tesnor (\wedge^2 V)^{\tensor -l-1}\tensor \Sym^k V \tesnor (\wedge^2 V)^{\tensor l+1}\ra \Sym_{n-k-2} V^*$$
under the identification $R/\OS \isom \Sym_{n-2} V^*\tesnor (\wedge^2 V)^{\tensor -l-1}$ of
Equation~\eqref{E:Iexactl}.

\end{proposition}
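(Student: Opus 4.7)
My plan is to reduce to the universal form and then verify the claim by direct computation with the explicit bases provided by Proposition~\ref{P:identifybasis}. Since the constructions of $\fRf$, $\fIf_k$, and $\fIf_k'$, the exact sequences of Equations~\eqref{E:Iexactl} and \eqref{E:Iexactlprime}, and the natural maps in the statement all commute with base change (Theorem~\ref{T:basechpf}), and every binary $n$-ic form is a pullback of the universal form $f = f_0x^n + \cdots + f_ny^n$ over $B = \Z[f_0,\dots,f_n]$, it suffices to check the identification for the universal form with $V$ free on $x,y$. Moreover, the second statement of the proposition, concerning $\fIf_k'$, follows from the first by tensoring the relevant maps with $(\wedge^2 V)^{\tensor l+1}$, so I focus on the first.

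For the universal form, Proposition~\ref{P:identifybasis} identifies the submodule $\Sym^k V \subset \fIf_k$ with $\langle 1,\theta,\dots,\theta^k\rangle$ and identifies the quotient $\fIf_k / \Sym^k V$ with the span of the classes of $\zeta_{k+1},\dots,\zeta_{n-1}$, where $\zeta_{k+i}$ corresponds to the dual basis element of $x^{n-k-i-1}y^{i-1}$. I would then compute the action of a basis element $\zeta_j \in \fRf$ (for $1\leq j \leq n-1$) on $\theta^i \in \fIf_k$ (for $0\leq i \leq k$) directly inside $\Qf$: namely $\zeta_j\theta^i = f_0\theta^{j+i} + f_1\theta^{j+i-1} + \cdots + f_{j-1}\theta^{i+1}$, and rewrite the result in the $\Z$-basis of Equation~\eqref{E:Ibasis} to read off the coefficients of each $\zeta_{k+m}$, which give the image in the quotient.

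On the other hand, under the identification $\fRf/\OS \cong \Sym_{n-2} V^*\tesnor (\wedge^2 V)^{\tensor -l-1}$, the class of $\zeta_j$ is (by the $k=0$ case of Proposition~\ref{P:identifybasis}) the dual basis element of $x^{n-j-1}y^{j-1}$. The natural contraction map of Lemma~\ref{L:Syminside} sends this dual basis element paired with $x^iy^{k-i}$ to a specific combination of dual basis elements in $\Sym_{n-k-2} V^*$, whose coefficients are governed by the monomial-to-divided-power pairing $\Sym_m V \tensor \Sym_m V^* \ra \OS$. The claim is then the matching of these coefficients with those computed in the previous step, which is a finite combinatorial check depending only on $n$, $k$, $i$, $j$ and not on the $f_i$.

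The main obstacle is the combinatorial bookkeeping of dual bases and the multinomial factors that enter whenever one passes between symmetric powers and their duals; the $(\wedge^2 V)$-twists arising from the Serre duality isomorphism $H^1(\P(V),\mathcal{O}(-n))\isom \Sym_{n-2} V^* \tesnor (\wedge^2 V)^*$ must be tracked consistently across the two computations. A more conceptual alternative, which avoids this bookkeeping, is to argue formally: the product $\fRf \tensor \fIf_k \ra \fIf_k$ is induced by a morphism of complexes $\mathcal{C}(0) \tensor \mathcal{C}(k) \ra \mathcal{C}(k)$ on $\P(V)$, and both the filtration coming from Equation~\eqref{E:SEScomplexes} and its connecting map are functorial under this product. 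Naturality of cup product with respect to the connecting homomorphism then forces the induced map $\fRf/\OS \tensor \Sym^k V \ra \fIf_k/\Sym^k V$ to be the cup product $H^1(\mathcal{O}(-n)) \tensor H^0(\mathcal{O}(k)) \ra H^1(\mathcal{O}(-n+k))$, which after Serre duality on $\P(V)$ is exactly the natural contraction map in the statement.
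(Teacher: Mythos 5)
Your proposal is correct and matches the paper, which does not write out a proof but instead indicates exactly the two strategies you describe: a purely formal argument from the cohomological constructions (naturality of the product of complexes with respect to the connecting map, reducing to the cup product and the standard identifications on $\P(V)$), or an explicit computation with the bases of Proposition~\ref{P:identifybasis} after reducing to the universal form. Your elaboration of both routes is sound, and the basis computation does close up as you indicate (in the quotient one finds $\zeta_j\theta^i\equiv\zeta_{j+i}$, matching the contraction of dual basis elements, with no stray multinomial factors because the paper uses $\Sym_m$ rather than $\Sym^m$ for the dual side).
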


\subsection{Dual modules}

For $-1\leq k\leq n-1$ we have a map
\begin{equation}\label{E:dualmap}
 \fIf'_k \tensor \fIf_{n-2-k} \ra \fIf'_{n-2}\ra \OS.
\end{equation}
The first map is induced from the map from the product of the complexes used to define
$\fIf'_k$ and $\fIf_{n-2-k}$ to the complex used to define $\fIf'_{n-2}$.  The second map comes from Equation~\eqref{E:Iexactlprime}.  

\begin{theorem}
 The pairing in Equation~\eqref{E:dualmap} gives an $\OS$-module map
$$
\fIf_k'\ra 
 \fIf^*_{n-2-k},
$$
and this map is an $\fRf$-module isomorphism.
\end{theorem}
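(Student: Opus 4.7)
The plan is to prove this by the five lemma, comparing the short exact sequence~\eqref{E:Iexactlprime} for $\fIf'_k$,
$$0\ra \Sym^k V \tensor (\wedge^2 V)^{\tensor l+1}\ra \fIf'_k \ra (\Sym^{n-k-2} V)^* \ra 0,$$
with the $\OS$-linear dual of the exact sequence~\eqref{E:Iexactl} applied to index $n-2-k$,
$$0\ra \Sym^k V \tensor (\wedge^2 V)^{\tensor l+1}\ra \fIf^*_{n-2-k} \ra (\Sym^{n-2-k} V)^* \ra 0.$$
Both rows have matching outer terms, so it suffices to show that the pairing map placed in the middle column gives a commutative diagram and that the induced maps on the outer terms are the natural isomorphisms.

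First I would verify $\fRf$-linearity of $\fIf'_k \ra \fIf^*_{n-2-k}$. For $r\in\fRf$, $a\in\fIf'_k$, and $b\in\fIf_{n-2-k}$, the identity $(ra)\cdot b = a\cdot (rb)$ holds in $\fIf'_{n-2}$ by the commutativity and associativity of the multiplication of complexes on $\P(V)$ from which all these $\fRf$-actions are derived; composing with the map $\fIf'_{n-2}\ra\OS$ of~\eqref{E:Iexactlprime} gives the claim.

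Second, I would check that the pairing respects the filtrations. The subobject $\Sym^k V\tensor(\wedge^2 V)^{\tensor l+1}$ of $\fIf'_k$ consists of classes represented by honest global sections of $\mathcal{O}(k)\tensor \pi^*(\wedge^2 V)^{\tensor l+1}$. Their product with any lift in $\fIf_{n-2-k}$ of a section in $H^0(\mathcal{O}(n-2-k))$ lies in $H^0(\mathcal{O}(n-2))\tensor(\wedge^2 V)^{\tensor l+1}$, on which the trace map $\fIf'_{n-2}\ra\OS$ vanishes since that trace factors through the $H^1$-quotient of $\fIf'_{n-2}$. Hence the sub of $\fIf'_k$ maps into the sub of $\fIf^*_{n-2-k}$, producing the required commutative diagram.

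The main obstacle, and the heart of the proof, is identifying the induced maps on the outer terms as canonical isomorphisms; this amounts to Serre duality on $\P(V)$. After dualizing, the induced map on the sub is the pairing $\Sym^k V\tensor(\Sym^k V)^*\tensor(\wedge^2 V)^{\tensor l+1}\tensor(\wedge^2 V)^{\tensor -l-1}\ra\OS$ arising from the cup product $H^0(\mathcal{O}(k))\tensor H^1(\mathcal{O}(-k-2))\ra H^1(\mathcal{O}(-2))$ composed with the canonical trace $H^1(\mathcal{O}(-2))\tensor\wedge^2 V\isom\OS$. By the standard residue calculation on $\P(V)$ this is the evaluation pairing, hence an isomorphism; the map on the quotient is identified the same way. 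The five lemma then completes the proof.
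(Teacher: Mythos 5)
Your proof is correct in outline, but it takes a genuinely different route from the paper. The paper reduces to the universal form over $\Z[f_0,\dots,f_n]$ (using that the constructions commute with base change), realizes the pairing as multiplication of elements of the fractional ideals $\fIf_k$, $\fIf_{n-2-k}$, $\fIf_{n-2}$ inside $\Qf$, and then exhibits, by a direct four-case computation, an explicit pair of bases of $\fIf_k'$ and $\fIf_{n-2-k}$ that are dual under the pairing; $\fRf$-linearity is immediate because the pairing factors through ideal multiplication. You instead keep everything basis-free: you compare the sequence~\eqref{E:Iexactlprime} for $\fIf_k'$ with the $\OS$-dual of~\eqref{E:Iexactl} for $\fIf_{n-2-k}$, check that the two $H^0$-parts pair to zero (so the filtrations are respected), identify the induced maps on the outer terms with the cup-product pairings $H^0(\mathcal{O}(k))\tensor H^1(\mathcal{O}(-k-2))\ra H^1(\mathcal{O}(-2))\isom (\wedge^2 V)^*$, and invoke Serre duality plus the five lemma. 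Your approach is more conceptual and explains \emph{why} the duality holds; its cost is that it leans on two standard but nontrivial compatibilities that you should at least state -- that the hypercohomology product induces the usual cup product on the associated graded pieces of the filtrations, and that relative Serre duality for $\P(V)\ra S$ gives perfect pairings of the pushforwards over an arbitrary base (both checkable by \v{C}ech computations on the standard cover, where the monomial bases pair by evaluation). The paper's explicit-basis computation sidesteps both issues and, as a byproduct, records the dual bases themselves, which are used elsewhere; your argument buys independence from coordinates and from the reduction to the universal form.
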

\begin{proof}
 We will show that this map is an $\fRf$-module isomorphism by checking on the universal form.
Since all forms are locally a pull-back from the universal form and these constructions commute with base change, the
theorem will follow for all forms.

We use the construction of $\fRf$ and $\fIf'_k$ and $\fIf_{n-2-k}$ in Section~\ref{S:RI}.  (Note that
for the universal form, we trivialize all $\wt V$ with the basis $x\wedge y$ and so $\fIf'_k=\fIf_k$.)
Since the complex used to define  $\fIf_i$ is chain homotopy equivalent to 
the sheaf $\mathcal{O}(i)/f(\mathcal{O}(i-n))$, we see that the map $ \fIf'_k \tensor \fIf_{n-2-k} \ra \fIf'_{n-2}$
is just the multiplication of global sections of $\mathcal{O}(k)_\Sf$ and $\mathcal{O}(n-2-k)_\Sf$ to obtain a global section of
$\mathcal{O}(n-2)_\Sf$.  This can be realized by multiplication of elements of the fractional ideals 
$\fIf_k$, $\fIf_{n-2-k}$, and $\fIf_{n-2}$ in Section~\ref{S:RI}.

\begin{lemma}
 Consider the $\OS$-module basis $$1,\theta,\dots,\theta^k,\zeta_{k+1}+f_{k+1},\dots,\zeta_{n-1}+f_{n-1}$$ for
 $\fIf_k'$.
For $\fIf_{n-2-k}$, consider the $\OS$-module basis of Equation~\eqref{E:basis2}, but reverse the order to obtain
$$f_0\theta^{n-1}+f_1\theta^{n-2}+\dots+f_{k}\theta^{n-k-1}, \dots, f_0\theta^{n-k}+f_1\theta^{n-k-1},
f_0\theta^{n-k-1},\theta^{n-2-k},\dots,\theta,1.$$
These are dual basis with respect to the pairing from Equation~\eqref{E:dualmap}.
\end{lemma}
\begin{proof}
From Proposition~\ref{P:identifybasis}, we know that the map $\phi: \fIf'_{n-2}\ra \OS$ in Equation~\eqref{E:Iexactlprime}
sends $\zeta_{n-1} \mapsto 1$ and $\theta^i\mapsto 0$ for $0\leq i \leq n-2$.  
The proof of this lemma then has four cases.

{\bf Case 1:} 
We see that $\theta^i \theta^j \stackrel{\phi}{\mapsto} 0 $ if $0\leq i\leq k$ and $0\leq j\leq n-2-k$.

{\bf Case 2:} We compute the image of $(\zeta_{i}+f_i)(f_0\theta^j+\dots+f_{j+k+1-n}\theta^{n-k-1})$ under $\phi$ for $k+1\leq i \leq n-1$ and $n-k-1\leq j \leq n-1$.
We have 
\begin{align*}
 &(\zeta_{i}+f_i)(f_0\theta^j+\dots+f_{j+k+1-n}\theta^{n-k-1})=
 (\zeta_{i}\theta^{n-i}+f_i\theta^{n-i})(f_0\theta^{j+i-n}+\dots+f_{j+k+1-n}\theta^{i-k-1})\\
&=(-f_{i+1}\theta^{n-i-1}-\dots-f_n)(f_0\theta^{j+i-n}+\dots+f_{j+k+1-n}\theta^{i-k-1}).
\end{align*}
Since $n-i-1+j+i-n=j-1\leq n-2$, we see that $$(\zeta_{i}+f_i)(f_0\theta^j+\dots+f_{j+k+1-n}\theta^{n-k-1})\stackrel{\phi}{\mapsto} 0.$$

{\bf Case 3:} We compute the image of $\theta^i  (f_0\theta^j+\dots+f_{j+k+1-n}\theta^{n-k-1})$ under $\phi$ for $0\leq i\leq k$ and $n-k-1\leq j \leq n-1$.
\begin{itemize}
 \item If $i+j\leq n-2$, this maps to 0.
\item If $i+j=n-1$, this maps to 1.
\item If $i+j\geq n$, the product is 
$$f_0\theta^{j+i}+\dots+f_{j+k+1-n}\theta^{n-k-1+i}=-f_{j+k+2-n}\theta^{n-k-2+i}-\dots-f_n \theta^{i+j-n},$$
and since $n-k-2+i\leq n-2$ it maps to 0.
\end{itemize}

{\bf Case 4:} We compute the image of $(\zeta_i+f_i) \theta^j  $ under $\phi$ for $k+1\leq i \leq n-1$ and $0\leq j\leq n-2-k$.
\begin{itemize}
 \item If $i+j\leq n-2$, this maps to 0.
\item If $i+j=n-1$, this maps to 1.
\item If $i+j\geq n$, the product is 
$(\zeta_i+f_i) \theta^j=-f_{i+1}\theta^{j-1}-\dots-f_n \theta^{i+j-n},$
and since $j-1\leq n-2$ it maps to 0.
\end{itemize}

\end{proof}

Finally, it is easy to see in the universal case that the pairing gives an $\fRf$-module homomorphism
$ \fIf_k'\ra 
 \fIf^*_{n-2-k},$ since the pairing factors through multiplication of the fractional ideal elements.

\end{proof}

\begin{corollary}\label{C:JInvdif}
Let $f$ be an $l$-twisted binary $n$-ic form over a base scheme $\BS$. Then
we have an isomorphism of 
$\fRf$-modules
$$\fIf_{n-2}'\isom \mathcal{H}om_{\mathcal{O}_\BS}(\fRf,\OS)$$
given by $ j \mapsto ( r\mapsto \phi(rj))$
where $\phi : \fIf_{n-2}' \ra \OS$ is the map
from Equation~\eqref{E:Iexactlprime}.
\end{corollary}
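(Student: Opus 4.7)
The plan is to deduce this corollary as the direct specialization of the preceding theorem to the case $k = n-2$. Before applying it, I need to identify the module $\fIf_0$ appearing on the right-hand side of that theorem. Unwinding the definition,
$$\fIf_0 = H^0 R\pi_*\left(\mathcal{O}(-n)\tensor (\pi^*\wt V)^{\tensor -l} \stackrel{f}{\ra} \mathcal{O}\right) = \fRf,$$
so $\fIf_0$ and $\fRf$ coincide (as $\fRf$-modules, with $\fRf$ acting on itself by multiplication).

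With this identification in hand, I would apply the preceding theorem with $k = n-2$. It provides an $\fRf$-module isomorphism
$$\fIf_{n-2}' \longrightarrow \fIf_0^* = \mathcal{H}om_{\OS}(\fRf,\OS),$$
induced by the pairing of Equation~\eqref{E:dualmap}. For $k=n-2$, that pairing factors as
$$\fIf_{n-2}' \tensor \fIf_0 \longrightarrow \fIf_{n-2}' \stackrel{\phi}{\longrightarrow} \OS,$$
where the first arrow is the $\fRf$-action on $\fIf_{n-2}'$ (since $\fIf_0=\fRf$) and $\phi$ is the surjection appearing in Equation~\eqref{E:Iexactlprime}. Thus the induced map $\fIf_{n-2}' \ra \fRf^*$ sends $j$ to the functional $r\mapsto \phi(rj)$, which is exactly the formula asserted.

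There is no serious obstacle here; the content of the corollary is entirely contained in the previous theorem, and the only point requiring verification is the tautological identification $\fIf_0 = \fRf$ together with the observation that the pairing, under this identification, takes the stated form.
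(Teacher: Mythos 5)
Your proposal is correct and is exactly the argument the paper intends: the corollary is the specialization $k=n-2$ of the preceding duality theorem, using the tautological identification $\fIf_0=\fRf$ and the observation that the pairing of Equation~\eqref{E:dualmap} then factors as the $\fRf$-action followed by $\phi$. The paper gives no separate proof of the corollary, so your write-up simply makes the intended deduction explicit.
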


\section{Main Theorem for $(-1)$-twisted binary forms}\label{S:TwistedMain}

In this section, we will see how a binary form is actually equivalent to a certain combination of the data we have constructed from it.
Let $f$ be a $(-1)$-twisted binary form over a base scheme \BS.
Let $R=\fRf$, let $I=\fIf_{n-3}$, and let $I\ra Q$ be the canonical quotient of 
$\fIf_{n-3}$ from Equation~\eqref{E:Iexactl}. So, $Q\isom V^*$.
From Proposition~\ref{P:rightaction}, we know that
the map $R/\OS\tensor \Sym^{n-3} Q^* \ra Q$
given by the action of $R$ on $I$ and the exact sequence of Equation~\eqref{E:Iexactl}
is identified with the natural map
$ \Sym_{n-2} Q \tensor \Sym^{n-3} Q^* \ra Q$
under the identification $R/\OS \isom \Sym_{n-2} Q$ of
Equation~\eqref{E:Iexactl}.

\begin{definition}
A \emph{binary $n$-pair} is an $\OS$-algebra $R$, an $R$-module $I$, an
exact sequence $0 \ra \Sym^{n-3} Q^*
 \ra I \ra Q \ra 0$ such that $Q$ is a locally free rank $2$ $\OS$-module, and
an isomorphism $R/\OS\isom \Sym_{n-2} Q$ that identifies the map
$R/\OS \tesnor  \Sym^{n-3} Q^* \ra Q$ induced from the action of $R$ on $I$  with the natural map
$\Sym_{n-2} Q \tesnor \Sym^{n-3} Q^*  \ra Q$. 
\end{definition}

\begin{remark}\label{R:n3}
 When $n=3$, we have that $\ker(I\ra Q) \isom \OS$ and the map
$Q \tensor \OS \ra Q$ given by the ring action $R/\OS \tensor \ker (I\ra Q) \ra Q$ is just the natural one.
We can  tensor the exact sequence 
$
0\ra\OS \ra R \ra R/\OS \ra0
$
with $\ker (I\ra Q)$ to show that $R\isom I $ as $R$-modules.
We can conclude that a binary 3-pair is just equivalent to a cubic ring, i.e. an $\OS$-algebra $R$
that is a locally free rank 3 $\OS$-module. 
\end{remark}

There are two equivalent formulations of the definition of a  binary pair that
can be useful.
\begin{proposition}\label{twistedzeroesandones}
 An $\OS$-algebra $R$ and and $R$-module $I$ are in a
a binary pair with $Q$ a free $\OS$-module if and only if 
 $R$ has a $\OS$-module basis $\zeta_0=1,\zeta_1,\dots,\zeta_{n-1}$ and
and $I$ has a $\OS$-module basis $\alpha_1,\alpha_2,\beta_1,\dots,\beta_{n-2}$ such that
$$
\text{the $\alpha_i$ coefficient of $\zeta_j\beta_k$ is }  \begin{cases} 
1 &\text{if $i + j +k = n+1$ }\\
0 &\text{otherwise.}
\end{cases}
$$

\end{proposition}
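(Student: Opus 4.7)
The plan is to trivialize $Q$ and translate the natural pairing $\Sym_{n-2} Q \tensor \Sym^{n-3} Q^* \ra Q$ into explicit structure constants in matched bases. Pick a basis $e_1, e_2$ of $Q$; let $s_a \in \Sym_{n-2} Q$ denote the sum over distinct rearrangements of $e_1^{\tensor a} \tensor e_2^{\tensor(n-2-a)}$ inside $Q^{\tensor(n-2)}$, giving an $\OS$-basis $s_0,\ldots,s_{n-2}$ of $\Sym_{n-2} Q$, and let $t_p = (e_1^*)^p (e_2^*)^{n-3-p}$ be the monomial basis of $\Sym^{n-3} Q^*$. The key computation is that the natural contraction factors via multiplication $\Sym^{n-3} Q^* \tensor Q^* \ra \Sym^{n-2} Q^*$ and the canonical perfect pairing $\Sym_{n-2} Q \tensor \Sym^{n-2} Q^* \ra \OS$, yielding $\langle s_a \cdot t_p,\, e_i^* \rangle = \langle s_a,\, t_p \cdot e_i^* \rangle = \delta_{a,\, p + \delta_{i,1}}$, and hence
$$ s_a \cdot t_p \;=\; \delta_{a,\,p+1}\, e_1 \;+\; \delta_{a,\,p}\, e_2 \qquad\text{in } Q. $$

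For the forward direction, set $\zeta_0 = 1$ and pick $\zeta_j \in R$ for $1 \le j \le n-1$ to be a lift of $s_{j-1}$ under $R \ra R/\OS \isom \Sym_{n-2} Q$. Choose lifts $\alpha_1, \alpha_2 \in I$ of $e_1, e_2$ via $I \ra Q$, and let $\beta_k \in I$ be the image of $t_{n-k-2}$ under $\Sym^{n-3} Q^* \hookrightarrow I$ for $1 \le k \le n-2$. These form $\OS$-bases of $R$ and $I$. By the defining property of a binary $n$-pair, the image of $\zeta_j \beta_k$ in $Q$ equals $s_{j-1} \cdot t_{n-k-2}$; setting $a = j-1$ and $p = n-k-2$ in the key computation makes this $\delta_{j+k,\,n}\, e_1 + \delta_{j+k,\,n-1}\, e_2$. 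Hence $\zeta_j \beta_k$ equals $\delta_{j+k,n}\alpha_1 + \delta_{j+k,n-1}\alpha_2$ modulo the $\OS$-span of the $\beta_\ell$, giving precisely the stated coefficient pattern $\delta_{i+j+k,\,n+1}$ for the $\alpha_i$-component.

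For the converse, suppose bases with the prescribed coefficients exist. Define $Q := I / \langle \beta_1,\ldots,\beta_{n-2}\rangle_\OS$, which is $\OS$-free of rank $2$ with basis $\bar\alpha_1, \bar\alpha_2$; identify $\langle \beta_k\rangle_\OS$ with $\Sym^{n-3} Q^*$ via $\beta_k \mapsto t_{n-k-2}$ and $R/\OS$ with $\Sym_{n-2} Q$ via $\zeta_j \mapsto s_{j-1}$. These are isomorphisms of free $\OS$-modules of the correct ranks. The structure constant hypothesis translates exactly to the statement that the induced action map $R/\OS \tensor \Sym^{n-3} Q^* \ra Q$ sends $s_{j-1} \tensor t_{n-k-2}$ to $\delta_{j+k,n}\, \bar\alpha_1 + \delta_{j+k,n-1}\, \bar\alpha_2$, which by the key computation is the natural pairing. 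All the data required by the definition of a binary $n$-pair is thereby assembled.

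The only real obstacle is the bookkeeping: one must fix the matchings $\zeta_j \leftrightarrow s_{j-1}$, $\beta_k \leftrightarrow t_{n-k-2}$, $\alpha_i \leftrightarrow e_i$ so that the pattern $i + j + k = n + 1$ emerges from the shifts $a = p+1$ and $a = p$ in the contraction $s_a \cdot t_p$. After these identifications are chosen, the entire proposition reduces to the one-line contraction identity for $s_a \cdot t_p$ displayed above.
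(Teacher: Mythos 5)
Your proposal is correct and takes essentially the same approach as the paper: both reduce the proposition to the explicit computation of the natural contraction $\Sym_{n-2} Q \tensor \Sym^{n-3} Q^* \ra Q$ on symmetrized and monomial bases (your identity $s_a \cdot t_p = \delta_{a,p+1}e_1 + \delta_{a,p}e_2$ is exactly the paper's displayed formula, up to relabeling which basis vector corresponds to $\alpha_1$ versus $\alpha_2$). Your write-up is merely more explicit about the two directions of the equivalence, which the paper leaves implicit.
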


\begin{proof}
If $Q$ is free with basis $x,y$ and dual basis $\dot{x}$ and $\dot{y}$ , we can explicitly calculate the 
natural map $\Sym_{n-2} Q \tesnor \Sym^{n-3} Q^*  \ra Q$.
Let $\sym(w)$ of a word $w$ be the sum of all distinct permutations of $w$.
We have that
$$
\sym(x^{i} y^{n-2-i}) \tesnor \dot{x}^{j}\dot{y}^{n-3-j} \mapsto
\begin{cases} 
x &\text{if $i=j+1$ }\\
y &\text{if $i =j$ }\\
0 &\text{otherwise.}
\end{cases}
$$
We have $\zeta_j \in \Sym_{n-2} Q$ corresponding to $\sym(x^{n-j-1} y^{j-1})$,
and $\alpha_1$ corresponding to $y$ and $\alpha_2$ corresponding to $x$, and
$\beta_k$ corresponding to $\dot{x}^{k-1}\dot{y}^{n-2-k}$, and we obtain the proposition.
\end{proof}

\begin{proposition}\label{P:exactsequencecond}
An $\OS$-algebra $R$, an $R$-module $I$, a locally free rank 2 \OS-module $Q$ that is a quotient of $I$, and an isomorphism of \OS-modules $\phi :  \Sym_{n-2} Q \isom R/\OS$ 
are in  binary pair if and only if 
$$
\begin{array}{ccccccccc}
0 & \lra  & \Sym_{n-1} Q & \lra & Q\tensor \Sym_{n-2} Q & \lra & 
\left(\ker (I\ra Q)\right)^*\tesnor 
\wedge^2 Q & \lra & 0  \\
&& q_1 q_2 \cdots q_{n-1} &\mapsto& q_1 \tensor q_2 \cdots q_{n-1} &\mapsto&
 && \\
&&  && q \tensor q_1 \cdots q_{n-2} &\mapsto&
\left( k \mapsto q \wedge \phi(q_1\cdots q_{n-2})\circ k \right) && 
\end{array}
$$
is an exact sequence, where $\circ$ denotes the action of $R$ on $I$ followed by the quotient to $Q$.. 
\end{proposition}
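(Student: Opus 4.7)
The plan is to deduce both implications from the canonical short exact sequence
\begin{equation}\label{E:cokoszul}
0 \ra \Sym_{n-1} Q \ra Q \tensor \Sym_{n-2} Q \ra \Sym_{n-3} Q \tensor \wedge^2 Q \ra 0
\end{equation}
valid for any locally free rank $2$ $\OS$-module $Q$, obtained by $\OS$-dualizing the standard Koszul sequence $0 \ra \wedge^2 Q^* \tensor \Sym^{n-3} Q^* \ra Q^* \tensor \Sym^{n-2} Q^* \ra \Sym^{n-1} Q^* \ra 0$ and using $Q^{**}=Q$. Its first arrow is the canonical inclusion of $S_{n-1}$-invariants into $S_{n-2}$-invariants inside $Q^{\tensor n-1}$, which agrees with the formula $q_1\cdots q_{n-1}\mapsto q_1\tensor q_2\cdots q_{n-1}$ in the proposition.

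The preliminary step is to identify the second arrow of \eqref{E:cokoszul}, which I call $K_{\rm Kos}$, with the natural map $\nu:\Sym_{n-2} Q\tensor\Sym^{n-3} Q^*\ra Q$ of Lemma~\ref{L:Syminside}. Under the evaluation pairing $\Sym_{n-3} Q\tensor\Sym^{n-3} Q^*\ra\OS$, the target $\Sym_{n-3} Q\tensor\wedge^2 Q$ of $K_{\rm Kos}$ is canonically $\Hom(\Sym^{n-3} Q^*,\wedge^2 Q)$, and the identification takes the form
$$K_{\rm Kos}(q\tensor s)(\xi) \;=\; q\wedge \nu(s\tensor\xi) \;\in\; \wedge^2 Q$$
for $q\in Q$, $s\in \Sym_{n-2} Q$, $\xi\in\Sym^{n-3} Q^*$. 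Both sides are natural in $Q$, so it suffices to verify the equality on a local basis $x,y$ of $Q$, which is a direct computation using the explicit formulas of Proposition~\ref{twistedzeroesandones}.

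For the forward direction, suppose $(R,I,Q,\phi)$ is a binary $n$-pair. The definition identifies $\ker(I\ra Q)$ with $\Sym^{n-3} Q^*$ and the induced map $\Sym_{n-2} Q\tensor\ker(I\ra Q)\ra Q$ with $\nu$. The preliminary identification then shows the second arrow of the proposition's sequence equals $K_{\rm Kos}$, so the sequence coincides with \eqref{E:cokoszul} and is exact.

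For the converse, set $K:=\ker(I\ra Q)$ and let $\alpha:Q\tensor\Sym_{n-2} Q\ra K^*\tensor\wedge^2 Q$ be the second arrow. Exactness says $\alpha$ is surjective with kernel $\Sym_{n-1} Q$, and by \eqref{E:cokoszul} $K_{\rm Kos}$ shares these properties; consequently there is a unique $\OS$-linear isomorphism $\beta:\Sym_{n-3} Q\tensor\wedge^2 Q\isom K^*\tensor\wedge^2 Q$ with $\alpha=\beta\circ K_{\rm Kos}$. Since $\wedge^2 Q$ is invertible, $\beta$ descends to $\beta_0:\Sym_{n-3} Q\isom K^*$, whose dual is an isomorphism $K\isom\Sym^{n-3} Q^*$. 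Combining the relation $\alpha=(\beta_0\tensor\operatorname{id})\circ K_{\rm Kos}$ with the preliminary identification and the nondegeneracy of the wedge pairing $Q\tensor Q\ra\wedge^2 Q$ on rank $2$ $Q$ yields, for all $q\in Q$, $s\in\Sym_{n-2} Q$, $k\in K$,
$$q\wedge (\phi(s)\circ k) \;=\; q\wedge \nu(s\tensor\beta_0^*(k)) \quad\Longrightarrow\quad \phi(s)\circ k \;=\; \nu(s\tensor\beta_0^*(k)),$$
which is precisely the binary $n$-pair condition under the identification $\beta_0^*:K\isom\Sym^{n-3} Q^*$. The main obstacle is the preliminary identification of $K_{\rm Kos}$ with $\nu$: once the duality conventions are correctly matched, everything else follows by elementary linear algebra and the invertibility of $\wedge^2 Q$.
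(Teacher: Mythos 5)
Your proposal is correct and follows essentially the same route as the paper: the paper's entire proof is the one-line reduction to Lemma~\ref{realexact}, which is exactly your canonical sequence (the paper verifies it by an explicit basis computation rather than by dualizing the Koszul complex, an immaterial difference), and the rest of your argument supplies the identification of its second arrow with the natural map and the two implications, which the paper leaves implicit. The one step worth flagging is in the converse: passing from $\beta_0 \colon \Sym_{n-3} Q \isom K^*$ to $K \isom \Sym^{n-3} Q^*$ by dualizing uses that the canonical map $K \ra K^{**}$ is an isomorphism; this holds in the intended setting where $I$ is locally free of rank $n$ (so $K$ is locally free), but is not literally forced by the hypotheses as stated, and the paper makes the same implicit assumption.
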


Proposition~\ref{P:exactsequencecond} follows from the following Lemma, proven in  Lemma~\ref{L:realexact} of the Appendix (Section~\ref{ModulesAppendix}).
\begin{lemma}\label{realexact}
If $Q$ is any locally free rank 2 $\OS$-module, we have the exact sequence
$$
\begin{array}{ccccccccc}
0 & \lra  & \Sym_{n-1} Q & \lra & Q\tensor \Sym_{n-2} Q & \lra & 
\Sym_{n-3} Q\tesnor 
\wedge^2 Q & \lra & 0 . \\
&& q_1 q_2 \cdots q_{n-1} &\mapsto& q_1 \tensor q_2 \cdots q_{n-1} &\mapsto&
q_2\cdots q_{n-2}\tensor  (q_{n-1} \wedge q_1)&& 
\end{array}
$$
%where the map on the right comes from 
%$\Sym_{n-2} Q \ra  \Sym_{n-3} Q\tesnor Q$.
\end{lemma}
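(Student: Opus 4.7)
The lemma is purely a statement about a locally free rank-$2$ module, so my first move is to reduce to the case where $Q\cong\OS x\oplus\OS y$ is free: exactness of a sequence of locally free $\OS$-modules is local, and both the source and target modules, together with the described maps, commute with base change. From then on everything is a linear-algebra identity in homogeneous polynomials of two variables.

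The cleanest conceptual approach is to recognize the sequence as the dual of the standard Koszul-type short exact sequence
\[
0\lra \Sym^{n-3} Q^*\tensor (\wt Q)^* \lra Q^*\tensor \Sym^{n-2} Q^* \stackrel{\mu}{\lra}\Sym^{n-1} Q^*\lra 0,
\]
where $\mu$ is multiplication of symmetric tensors. In the free case, $\mu$ is surjective because every monomial $x^{a}y^{n-1-a}\in\Sym^{n-1} Q^*$ is the image of $x\otimes x^{a-1}y^{n-1-a}$ (or the analogous expression with $y$). A rank count gives $\rank\ker\mu = 2(n-1)-n = n-2 = \rank\Sym^{n-3} Q^*$, and the explicit kernel inclusion
\[
x^{a}y^{n-3-a}\tensor (x\wedge y)^* \lmapsto x\tensor x^{a}y^{n-2-a} - y\tensor x^{a+1}y^{n-3-a}
\]
is the standard antisymmetrization. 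Dualizing, and using the canonical isomorphism $Q\isom Q^*\tesnor\wt Q$ that is valid in rank two (Lemma~\ref{L:rk2} of the Appendix), produces precisely the sequence stated in the lemma.

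To execute this, I would (i) write down both sequences on the monomial basis; (ii) verify injectivity of the first map directly, since it is the inclusion of the symmetric subspace of $Q^{\tensor(n-1)}$ into the subspace symmetric in the last $n-2$ slots; (iii) verify that the composition vanishes because antisymmetrizing any two tensor slots of a fully symmetric tensor gives zero; and (iv) check surjectivity onto $\Sym_{n-3} Q\tesnor \wt Q$ by hitting the generator $q_2\cdots q_{n-2}\tesnor(x\wedge y)$ with $x\tesnor (y\cdot q_2\cdots q_{n-2})$. Exactness in the middle then follows from the rank count together with the three preceding facts.

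The main obstacle is bookkeeping rather than mathematics: the map $q\tesnor q_1\cdots q_{n-2}\mapsto q_2\cdots q_{n-2}\tesnor(q\wedge q_1)$ is not manifestly well-defined (the $q_i$ are only defined up to symmetrization), so one must check that antisymmetrizing in the first two tensor slots of $q\tesnor(q_1\odot\cdots\odot q_{n-2})$ genuinely lands in $\wt Q\tensor\Sym_{n-3} Q$ and agrees with the formula. The key observation is that a tensor which is symmetric in positions $2,\dots,n-1$ and whose swap in positions $1,2$ is subtracted from it remains symmetric in positions $3,\dots,n-1$, which justifies the target. Once this well-definedness and the identification with the dualized Koszul map are nailed down, exactness follows either by the duality argument above or by a one-line monomial computation: writing a general element as $x\tesnor P(x,y)+y\tesnor R(x,y)$ with $P,R$ homogeneous of degree $n-2$, the map sends it (up to the sign convention) to $(yP-xR)/(x\wedge y)$-style expression, whose vanishing is precisely the condition that the pair $(P,R)$ come from a single polynomial in $\Sym_{n-1} Q$.
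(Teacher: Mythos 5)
Your plan is sound, and your primary route is genuinely different from the paper's. The paper works directly in the free case by exhibiting an explicit basis of the middle term adapted to the filtration: it checks that $\alpha_k=\sym(x^ky^{n-1-k})$ maps to $\beta_k = x\tensor\sym(x^{k-1}y^{n-1-k})+y\tensor\sym(x^{k}y^{n-2-k})$, that the $\beta_k$ together with $\gamma_\ell=x\tensor\sym(x^\ell y^{n-2-\ell})$, $0\leq\ell\leq n-3$, form a basis of $Q\tensor\Sym_{n-2}Q$, and that the $\gamma_\ell$ map to a basis of $\Sym_{n-3}Q\tensor\wedge^2 Q$; exactness is then immediate and unimodularity of the basis change handles the integral base. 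Your approach instead dualizes the Koszul multiplication sequence $0\ra \Sym^{n-3}Q^*\tensor\wedge^2 Q^*\ra Q^*\tensor\Sym^{n-2}Q^*\ra\Sym^{n-1}Q^*\ra 0$ and uses $(\Sym_m Q)^*\isom\Sym^m Q^*$; this is more conceptual and explains where the sequence comes from, at the cost of the bookkeeping you acknowledge (matching the transposed maps with the stated formulas, and the well-definedness of $q_1\tensor q_2\cdots q_{n-1}\mapsto q_2\cdots q_{n-2}\tensor(q_{n-1}\wedge q_1)$ on symmetric tensors) and of knowing the Koszul sequence is exact over $\Z$, which itself ultimately rests on a basis computation of the same kind as the paper's. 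One caution on your fallback route: over a general base, injectivity, surjectivity, zero composite, and a rank count do \emph{not} by themselves give exactness in the middle (consider $0\ra\Z\stackrel{2}{\ra}\Z\ra 0\ra 0$); you must actually identify the kernel, either by the duality argument carried through, or by the explicit computation you sketch in your last paragraph (writing an element as $x\tensor T+y\tensor U$ and checking that the vanishing of the image forces the coefficient relations defining the image of $\Sym_{n-1}Q$), or by a basis argument as in the paper. With that point closed, your proof is complete.
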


The following lemma is used to construct a $(-1)$-twisted binary form from a binary pair, and is proven in  Lemma~\ref{L:zwd2} of the Appendix (Section~\ref{ModulesAppendix}).

\begin{lemma}\label{L:zwd}
Let $R$ be an \OS-algebra, $I$ be
an $R$-module, $Q$
be a locally free rank 2 \OS-module quotient of $I$, and
$\phi$ be an isomorphism of \OS-modules $\phi :  \Sym_{n-2} Q \isom R/\OS$.
If
$$
\map{\Sym_{n-1} Q \tesnor \ker(I\ra Q)}{\wedge^2 Q}{q_1\cdots q_{n-1} \tensor k}
{q_1\wedge \phi(q_2\cdots q_{n-1})\circ k}
$$
is the zero map, then
$$
\map{\Sym_{n} Q }{\wedge^2 Q}{q_1\cdots q_{n} }
{q_1\wedge \phi(q_2\cdots q_{n-1})\circ \tilde{q_n}}
$$
is well-defined.  Here the $\circ$ denotes the action of $R$ on $I$ followed by the quotient to $Q$
and $\tilde{q}$ denotes a fixed splitting $Q\ra I$.  In particular the map $\Sym_{n} Q \ra \wedge^2 Q$
does not depend on the choice of this splitting.
\end{lemma}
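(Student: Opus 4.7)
The proof has two steps: first, showing the formula defines a well-defined $\OS$-linear map out of $\Sym_n Q$, and second, showing that this map is independent of the splitting $Q \ra I$.

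For the first step, I would view the assignment
$$q_1 \otimes \cdots \otimes q_n \mapsto q_1 \wedge \phi(q_2 \cdots q_{n-1}) \circ \tilde{q_n}$$
as an $\OS$-multilinear map on $Q^{\otimes n}$, and check it is $S_n$-invariant so as to descend through $\Sym_n Q \hookrightarrow Q^{\otimes n}$. Invariance under permutations of $q_2, \ldots, q_{n-1}$ is automatic, since $\phi$ factors through $\Sym_{n-2} Q$. It thus suffices to verify the two transpositions $q_1 \leftrightarrow q_2$ and $q_{n-1} \leftrightarrow q_n$. These follow from the identification in Proposition~\ref{P:rightaction}, which realizes the composition of the action of $R$ on $I$ followed by the projection to $Q$ as the canonical contraction $\Sym_{n-2} Q \tensor \Sym^{n-3} Q^* \ra Q$. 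By base change (Theorem~\ref{T:basechpf}), it suffices to check the two identities for the universal form over $\Z[f_0,\dots,f_n]$, where the explicit bases of Equations~\eqref{E:basis} and~\eqref{E:Ibasis} reduce the check to a direct calculation.

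For the second step, any change of splitting takes the form $\tilde{q} \mapsto \tilde{q} + k(q)$ for some $\OS$-linear map $k: Q \ra \ker(I \ra Q)$. On the pure tensor $q_1 \otimes \cdots \otimes q_n$, the output changes by
$$q_1 \wedge \phi(q_2 \cdots q_{n-1}) \circ k(q_n).$$
Restricted to $\Sym_n Q$, by the $S_n$-invariance established in the first step, this difference can be computed as a sum over the $n$ choices of which $q_j$ to feed into $k$. Each summand is then the value at $q_1 \cdots \widehat{q_j} \cdots q_n \tensor k(q_j)$ of the map
$$\Sym_{n-1} Q \tensor \ker(I \ra Q) \ra \wedge^2 Q$$
from the hypothesis of the lemma, which is identically zero by assumption. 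Summing over $j$, the total change vanishes, so the resulting map on $\Sym_n Q$ is independent of the splitting.

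The main obstacle is the first step, since the two nontrivial transposition identities are not evident from the definition and require the structural compatibility supplied by Proposition~\ref{P:rightaction}. Reducing to the universal form provides the cleanest route, as there the required identities become explicit polynomial equalities in $f_0, \ldots, f_n$ that can be verified directly from the multiplication and action tables of Section~\ref{S:tables}.
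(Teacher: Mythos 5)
There are two genuine gaps here. The first is in your Step 1: the lemma is stated for an arbitrary $\OS$-algebra $R$, $R$-module $I$, rank-2 quotient $Q$, and $\OS$-module isomorphism $\phi:\Sym_{n-2}Q\isom R/\OS$ subject only to the stated vanishing hypothesis --- there is no assumption that $(R,I)$ is a binary pair or arises from a binary form. So Proposition~\ref{P:rightaction}, base change, and reduction to the universal form are simply not available, and the transposition identities $q_1\leftrightarrow q_2$ and $q_{n-1}\leftrightarrow q_n$ you want to verify need not hold for such general data (one can build examples where $\phi(q_2)\circ\tilde{q_3}$ is a non-symmetric bilinear function of $(q_2,q_3)$). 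Fortunately, no $S_n$-invariance of the single-term formula is needed: the paper does not define the map by choosing an ordered presentation $q_1\cdots q_n$ and applying one term. Instead $\Sym_n Q$ (the submodule of symmetric tensors) sits canonically inside $Q\tensor\Sym_{n-2}Q\tensor Q$ by Lemma~\ref{L:Syminside}, and the displayed formula is shorthand for restricting to $\Sym_n Q$ the evident map on $Q\tensor\Sym_{n-2}Q\tensor Q$ determined by the two splittings; with this reading the only content of the lemma is independence of those splittings.

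The second gap is that you treat only the splitting $Q\ra I$ and never the lift hidden in $\phi$: since $\phi$ takes values in $R/\OS$, letting $\phi(q_2\cdots q_{n-1})$ act on $I$ requires a lift $\Sym_{n-2}Q\isom R/\OS\ra R$, and changing that lift by $c\in\OS$ changes $q_1\wedge\phi(q_2\cdots q_{n-1})\circ\tilde{q_n}$ by $c\,(q_1\wedge q_n)$, which is not zero term by term. The paper disposes of this by observing that on symmetric tensors the resulting ambiguity factors through $\OS\tensor\Sym_2Q$, and $\Sym_2Q\ra\wedge^2Q$ is zero. (Note this issue does not arise for the hypothesis map, where the $\OS$-ambiguity stays in $\ker(I\ra Q)$.) Your Step 2 is essentially the paper's argument for the splitting $Q\ra I$ --- the difference factors through $\Sym_{n-1}Q\tensor\ker(I\ra Q)$ applied at $q_1\cdots\widehat{q_j}\cdots q_n\tensor k(q_j)$ and dies by hypothesis --- but it should be justified by the canonical decomposition of a symmetric tensor inside $Q\tensor\Sym_{n-2}Q\tensor Q$ rather than by the $S_n$-invariance claimed in Step 1.
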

By Proposition~\ref{P:exactsequencecond}, we see that
$\Sym_{n-1} Q \tesnor \ker(I\ra Q)\ra \wedge^2 Q$ is always the zero map for a binary pair, and
thus we can use Lemma~\ref{L:zwd} to construct a $(-1)$-twisted binary form in $\Sym^n Q^* \tesnor \wt Q$ from a binary pair.
We can write the map of Lemma~\ref{L:zwd} as the evaluation 
$$x \mapsto x\wedge \phi(x^{n-2})x$$
 of a degree $n$ map $Q \ra \wedge^2 Q$.  Note this coincides with
the map $x\wedge x^2$ as described in the case of binary cubic forms in \cite[Footnote 3]{HCL2}.

\begin{theorem}\label{T:getback}
Let $(V,f)$ be a $(-1)$-twisted binary form and $(R,I)$ be its associated binary pair.
The $(-1)$-twisted binary form constructed from $(R,I)$ is $f\in\Sym^n V \tesnor \wt V$.
\end{theorem}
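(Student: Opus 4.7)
The plan is to reduce the identity to the universal form and then verify it by direct calculation in the explicit bases of Section~\ref{S:RI}. The construction $(V,f) \mapsto (R,I,Q,\phi)$ commutes with base change by Theorem~\ref{T:basechpf}, and the construction of a $(-1)$-twisted binary form from a binary pair via Lemma~\ref{L:zwd} is manifestly functorial in $\OS$-module maps, so it too commutes with base change. Since every $(-1)$-twisted binary form is Zariski-locally a pullback of the universal form $f = f_0 x^n + f_1 x^{n-1} y + \dots + f_n y^n$ over $B = \Z[f_0,\dots,f_n]$, it suffices to prove the theorem in the universal case, with $V$ free on $x,y$.

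For the universal form I would use the concrete realizations $R = \fRf$ and $I = \fIf_{n-3}$ inside $\Qf$, together with Proposition~\ref{P:identifybasis}, to make every piece of data in Lemma~\ref{L:zwd} explicit. The submodule $\Sym^{n-3} V \subset I$ is spanned by $\{\theta^i\}_{0 \le i \le n-3}$; the basis elements $\zeta_{n-2}$ and $\zeta_{n-1}$ of $I$ lift the basis of $Q \cong V^*$ dual to $x$ and $y$; and the isomorphism $\phi\colon \Sym_{n-2} Q \isom R/\OS$ sends $\zeta_i \pmod{\OS}$ to the element of $\Sym_{n-2} V^*$ dual to $x^{n-1-i} y^{i-1} \in \Sym^{n-2} V$ for $1 \le i \le n-1$.

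With these identifications fixed, I would take a generic $\xi = a \dot x + b \dot y \in Q$, choose the linear lift $\tilde\xi = a \zeta_{n-2} + b \zeta_{n-1} \in I$, expand $\phi(\xi^{n-2}) \in R/\OS$ as an explicit $B$-linear combination of $\zeta_1,\dots,\zeta_{n-1}$, and use the multiplication table in Equation~\eqref{E:mult} together with the induced action of $R$ on $I$ to compute $\phi(\xi^{n-2})\tilde\xi \in I$. Projecting back to $Q$ and wedging with $\xi$ then yields an element of $\wt Q$ whose coefficient of $\dot x \wedge \dot y$ I expect to equal $\sum_k f_k a^{n-k} b^k$, matching the value of $f$ at $\xi$ under the natural identification $\Sym^n V \tensor (\wt V)^{\tensor -1} \isom \Sym^n Q^* \tensor \wt Q$ induced by the trivialization $x \wedge y$ of $\wt V$. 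The main obstacle I anticipate is purely bookkeeping: keeping straight the identifications $\Sym_k V^* \cong (\Sym^k V)^*$ and $V \cong Q^*$, the direction of the twist, and the multinomial factors $\binom{n-2}{i-1}$ that appear when passing between the symmetrized and monomial bases, so that the $f_k$ land in the right places. Once this is pinned down, the identity reduces to the telescoping relations built into Equation~\eqref{E:mult}, which control exactly how powers $\theta^i$ with $i \ge n-2$ reduce modulo $\Sym^{n-3} V \subset I$.
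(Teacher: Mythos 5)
Your proposal is correct and follows essentially the same route as the paper: reduce to the universal form by base change, use Proposition~\ref{P:identifybasis} to identify $Q$, its lift via $\zeta_{n-2},\zeta_{n-1}$, the kernel $\langle 1,\theta,\dots,\theta^{n-3}\rangle$, and the isomorphism $\phi$ concretely inside $\Qf$, and then evaluate the degree-$n$ map $Q\ra\wt Q$ using the multiplication table~\eqref{E:mult}. The only cosmetic difference is that you evaluate at a generic $a\dot{x}+b\dot{y}$ while the paper evaluates on the basis elements $\sym(x^ky^{n-k})$ of $\Sym_n Q$, which amounts to the same computation (and, since the basis of $\Sym_{n-2}Q$ is the symmetrized sum $\sym(\cdot)$, the binomial coefficients you worry about do not in fact appear).
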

\begin{proof}
First we note that the $(-1)$-twisted binary form constructed from $(R,I)$ is
a global section of $\Sym^n V \tesnor \wt V$.
Then, we can check the theorem locally on $S$, so we can assume that $f$ is a free form.
Since $f$ then is a pull-back from the universal form, we can just check the theorem on the universal form $f$
over $\BR=\Z[f_0,\dots,f_n]$. 
Let $x,y$ be the basis of $Q\isom V^*$ and $\dot{x},\dot{y}$ be a corresponding dual basis.

The $(-1)$-twisted binary $n$-ic form associated to our binary $n$-pair is
given by
$$
\map{\Sym_{n} Q }{\wedge^2 Q}{q_1\cdots q_{n} }
{q_1\wedge \phi(q_2\cdots q_{n-1})\circ \tilde{q_n}}.
$$
Thus for $1\leq k \leq n$ we have
\begin{align}\label{E:formformula}
 \sym(x^k y^{n-k}) &\mapsto x \wedge \phi(\sym(x^{k-2} y^{n-k})) x 
		 +x \wedge \phi(\sym(x^{k-1} y^{n-k-1})) y \notag\\
		&+y \wedge \phi(\sym(x^{k-1} y^{n-k-1})) x 
		 +y \wedge \phi(\sym(x^{k} y^{n-k-2})) y \notag\\
=& (\dot{y}(\zeta_{n-k+1}x )+\dot{y}(\zeta_{n-k}y )-\dot{x}(\zeta_{n-k}x ) -\dot{x}(\zeta_{n-k-1}y) )\tensor(x\wedge y) 
\end{align}
where by convention $\sym(x^ay^b)$ is zero if either $a$ or $b$ is negative and $\zeta_i=0$ if $i<1$ or $i>n-1$.
If $K$ is the fraction field of $B$, then the concrete constructions of 
$\fRf$ and $\fIf_{n-3}$ from Section~\ref{S:RI} lie in $\Qf:=K(\theta)/(f_0\theta^n +f_1 \theta^{n-1} +\dots +f_n)$
and are given by Equations~\eqref{E:basis} and \eqref{E:Ibasis}.
From Proposition~\ref{P:identifybasis}, we know we can can identify $x$ with the image of $\zeta_{n-2}$ and $y$ with the image of $\zeta_{n-1}$ in the concrete construction of
$\fIf_{n-3}$.
We can further identify $1, \theta, \dots, \theta^{n-3}$ with the kernel $\Sym^{n-3} Q^*$ of $I \ra Q$.
Using the basis $\zeta_i$ of $\fRf$ and the basis from Equation~\eqref{E:Ibasis} for  $\fIf_{n-3}$,
we have that the $\zeta_{n-1}$ and $\zeta_{n-2}$ coordinates of elements in $\fRf$ and $\fIf_{n-3}$
do not depend on whether taken in the $\fRf$ basis or $\fIf_{n-3}$ basis.  
We can thus compute the expressions $\dot{y}(\zeta_{n-k+1}x ),\dot{y}(\zeta_{n-k}y ),\dot{x}(\zeta_{n-k}x ),\dot{x}(\zeta_{n-k-1}y)$
from Equations~\eqref{E:mult} to prove the proposition.  
\end{proof}

In fact, we have the following theorem, which shows that $(-1)$-twisted binary forms exactly parametrize binary pairs.
\begin{theorem}\label{twistedthm}
For $n\geq 3,$ we have a bijection between $(-1)$-twisted binary $n$-ic forms over $S$ and binary $n$-pairs over $S$, and the bijection commutes with base change in $S$.  In other words, we have a isomorphism of the moduli stack of $(-1)$-twisted binary $n$-ic forms and the moduli stack of binary $n$-pairs.
\end{theorem}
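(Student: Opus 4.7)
The plan is to construct functorial assignments between the two moduli stacks in both directions and verify that each composition is the identity. In one direction, a $(-1)$-twisted binary form $(V, f)$ yields the binary $n$-pair $(\fRf, \fIf_{n-3}, V^*, \phi)$, where $\phi : \Sym_{n-2} V^* \isom \fRf/\OS$ is the isomorphism from Equation~\eqref{E:Iexactl}; Proposition~\ref{P:rightaction} is exactly the binary pair axiom, and Theorem~\ref{T:basechpf} gives compatibility with base change. In the reverse direction, a binary pair $(R, I, Q, \phi)$ gives the form $f \in \Sym^n Q^* \tensor \wt Q$ defined by $x \mapsto x \wedge \phi(x^{n-2})x$, well-defined by Lemma~\ref{L:zwd} (whose hypothesis follows from Proposition~\ref{P:exactsequencecond}); taking $V = Q^*$, this assignment is manifestly functorial.

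One composition, form $\to$ pair $\to$ form, is the identity by Theorem~\ref{T:getback}. For the other, pair $\to$ form $\to$ pair, I would first reduce to the case where $Q$ is free by working locally on $S$. Applying Proposition~\ref{twistedzeroesandones}, I choose $\OS$-bases $\zeta_0 = 1, \ldots, \zeta_{n-1}$ of $R$ and $\alpha_1, \alpha_2, \beta_1, \ldots, \beta_{n-2}$ of $I$ in which the $\alpha_i$-coefficient of $\zeta_j \beta_k$ is $1$ if $i + j + k = n+1$ and $0$ otherwise. The computation in the proof of Theorem~\ref{T:getback}, specifically Equation~\eqref{E:formformula}, then expresses the coefficients $f_0, \ldots, f_n \in \OS$ of the form $f$ in terms of the $R$-module structure on $I$. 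The pair $(\fR{f}, \fI{f}_{n-3})$ reconstructed from this $f$ is given, via Section~\ref{S:tables}, by the standard multiplication and action tables of Equation~\eqref{E:mult} and Equation~\eqref{E:Ibasis} (for $k = n-3$) in terms of the $f_i$.

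It remains to show that the original $R$-algebra structure and $R$-action on $I$ coincide with these tables in the chosen bases. The $\alpha$-coefficients of $\zeta_j \beta_k$ agree by the defining property of the bases. The remaining structure constants — the $\beta$-coefficients of $\zeta_i \beta_k$ and of $\zeta_i \zeta_j$ — are forced by the unit axiom $\zeta_0 = 1$, associativity and commutativity of multiplication in $R$, and the $R$-module axiom $\zeta_i(\zeta_j \beta_k) = (\zeta_i \zeta_j)\beta_k$ on $I$. Each such constant then becomes an explicit polynomial in the $f_i$, and the task reduces to matching these polynomials term-by-term with Equation~\eqref{E:mult} and the action table for $\fI{f}_{n-3}$.

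The main obstacle is this explicit combinatorial verification. I would carry it out once on the universal binary $n$-pair over $\Z[f_0, \ldots, f_n]$, where the computation is tractable and the answer is already anticipated: starting from the universal pair, the form extracted is the universal form by the first composition applied to the universal form, and then the pair reconstructed from the universal form is tautologically the universal pair. Since both constructions commute with base change, and every binary pair with free $Q$ is obtained from the universal pair by the ring map $\Z[f_0, \ldots, f_n] \to \OS$ sending $f_i$ to the scalars extracted via Equation~\eqref{E:formformula}, the identity propagates to all binary pairs over all $S$, yielding the desired isomorphism of moduli stacks.
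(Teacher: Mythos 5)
Your overall architecture is the same as the paper's: two functorial constructions, one composite handled by Theorem~\ref{T:getback}, and the other composite checked by rigidifying to the case of free $Q$, extracting the coefficients $f_i$ from Equation~\eqref{E:formformula}, and arguing that the algebra axioms force all remaining structure constants to be the universal polynomials in the $f_i$ of Equation~\eqref{E:mult} and the action tables. The gap is in that last step. The bases supplied by Proposition~\ref{twistedzeroesandones} are not rigid: they only constrain the $\alpha_i$-coefficients of $\zeta_j\beta_k$, so you are still free to modify the lift of each $\zeta_i$ to $R$ by a scalar and the lifts of $\alpha_1,\alpha_2$ to $I$ by elements of $\ker(I\to Q)$. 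These modifications preserve the hypotheses of Proposition~\ref{twistedzeroesandones} but change the individual coordinates $\dot{y}(\zeta_i x)$, $\dot{x}(\zeta_i y)$, etc. (only the particular combinations appearing in Equation~\eqref{E:formformula} are invariant — that is exactly why the form is well-defined). Consequently the structure constants of a given pair in such a basis are \emph{not} determined by the pair, and your proposed term-by-term match with Equation~\eqref{E:mult} cannot succeed as stated: two isomorphic pairs, or even one pair with two admissible basis choices, will produce different tables.

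The fix is the content the paper supplies in its subsection on based binary pairs: one must first gauge-fix the lifts by imposing the normalizations $\dot{y}(\zeta_i x)=0$ and $\dot{y}(\zeta_i y)=0$ for $2\leq i\leq n-1$ and $\dot{x}(\zeta_i x)=0$ for $1\leq i\leq n-1$, which cuts the data down to exactly $n+1$ free parameters $a_0,\dots,a_n$, and then prove (Proposition~\ref{P:determined}, via an induction using commutativity of the action matrices $Z_i$) that these $n+1$ parameters determine every remaining entry of the multiplication and action tables as an integral polynomial. With that rigidity statement in hand, your concluding paragraph — identify the polynomials by evaluating on the pair built from the universal form, then propagate by base change — goes through and is precisely the paper's argument. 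You should also note that your list of undetermined constants is incomplete: besides the $\beta$-coefficients of $\zeta_i\beta_k$ and the products $\zeta_i\zeta_j$, the coefficients of $\zeta_i\alpha_j$ other than those packaged into the $f_i$ must also be pinned down, and it is exactly these that the normalization controls.
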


An isomorphism of two $(-1)$-twisted binary $n$-ic forms $f\in \Sym^n V \tesnor \wt V^*$ and $f'\in \Sym^n V' \tesnor \wt (V')^*$
is an isomorphism $V \isom V'$ that preserves $f$.  An isomorphism of two binary $n$-pairs $R$, $I$, $Q$ and $R',$ $I'$, $Q'$
is given by isomorphisms $R \isom R'$, and $I \isom I'$, and $Q \isom Q'$ that respect the exact sequence for $I$ (and $I'$)
and the maps $R/\OS\isom\Sym_{n-2} Q$ and $R'/\OS\isom\Sym_{n-2} Q'$.

 See \cite{BinQuad} for the 
full story for binary quadratic forms.
In the $n=3$ case, from Remark~\ref{R:n3} we know that a binary 3-pair is equivalent to a \emph{cubic ring},
an $\OS$-algebra $R$ such that $R$ is a locally free rank 3 $\OS$-module.
Thus we obtain the following corollary, given in \cite{Delcubic} (see also \cite{Quartic} for a detailed exposition of
this case).

\begin{corollary}
We have a bijection between $(-1)$-twisted binary cubic forms over $S$ and cubic rings over $S$, and the bijection commutes with base change in $S$.  In other words, we have a isomorphism of the moduli stack of $(-1)$-twisted binary $n$-ic forms and the moduli stack of cubic rings.
\end{corollary}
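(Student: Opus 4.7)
The plan is to deduce this corollary from Theorem~\ref{twistedthm} with $n=3$ by showing that the moduli stack of binary $3$-pairs is canonically isomorphic to the moduli stack of cubic $\OS$-algebras. Once that identification is in place, the corollary follows formally, including the base change statement. The heart of the argument is the observation in Remark~\ref{R:n3}: when $n=3$, the extra data $(I,Q,\phi)$ in a binary pair is redundant, because $\Sym^{n-3}Q^* = \Sym^{0}Q^* = \OS$ canonically and $\Sym_{n-2}Q = \Sym_{1}Q = Q$ canonically.

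In more detail, I would exhibit two mutually inverse functors. The forgetful direction sends a binary $3$-pair $(R,I,Q,\phi)$ to the $\OS$-algebra $R$, which is locally free of rank $3$ since $R/\OS \isom Q$ is locally free of rank $2$; so $R$ is a cubic ring. In the opposite direction, given a cubic ring $R$, set $Q := R/\OS$, take $I := R$ with the canonical surjection $R \ra R/\OS = Q$ (whose kernel is $\OS = \Sym^{0}Q^*$), and let $\phi$ be the identity under $R/\OS = Q = \Sym_{1}Q$. The compatibility condition on the action map $R/\OS \tensor \Sym^{0}Q^* \ra Q$ reduces to the tautology that $Q \tensor \OS \ra Q$ is the natural identification, so this is indeed a binary $3$-pair. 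Composing cubic ring $\mapsto$ binary $3$-pair $\mapsto$ cubic ring returns the original ring on the nose.

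For the other composition, I would argue as sketched in Remark~\ref{R:n3}. Given $(R,I,Q,\phi)$, the canonical identification $\Sym^{0}Q^* = \OS$ gives a canonical global element $e \in \ker(I \ra Q)$ (the image of $1$). The $R$-linear map $R \ra I$, $r\mapsto re$, then fits into a commutative ladder from $0\ra \OS \ra R \ra R/\OS \ra 0$ to $0 \ra \ker(I\ra Q) \ra I \ra Q \ra 0$, whose left vertical is the identity $\OS = \OS$ and whose right vertical is $\phi^{-1}\colon R/\OS \isom Q$; commutativity of the right square is exactly the binary pair compatibility between the action and $\phi$ in this degenerate case. The five lemma then gives an isomorphism of binary $3$-pairs, functorial in $(R,I,Q,\phi)$ because $e$ is canonical, and automatically matching morphisms both ways.

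Finally, both functors manifestly commute with base change in $S$: quotients, kernels, and $\Sym$ of locally free modules behave well under pullback, and the identification $\Sym^{0}Q^* = \OS$ is preserved. Combining with the base change assertion of Theorem~\ref{twistedthm}, we obtain the claimed isomorphism of moduli stacks. There is no real obstacle here; the only thing to be careful about is checking that $e$ (and hence the isomorphism $R\isom I$) is genuinely canonical, which is what makes the construction glue from local patches without requiring a cocycle argument.
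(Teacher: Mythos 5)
Your proposal is correct and follows essentially the same route as the paper: the corollary is deduced from Theorem~\ref{twistedthm} with $n=3$ together with the equivalence between binary $3$-pairs and cubic rings sketched in Remark~\ref{R:n3}, which you have merely spelled out in more detail (your canonical element $e$ and five-lemma ladder is the expanded form of the remark's observation that $\ker(I\ra Q)\isom\OS$ forces $I\isom R$ compatibly with the structure maps).
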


To prove Theorem~\ref{twistedthm} we will rigidify the moduli stacks, and thus we will need to define
based  binary pairs.
\subsection{Based  binary pairs}
A \emph{based  binary pair} is 
 binary pair $R,I,Q$ and a choice of basis $x,y$ of $Q$ such that
$Q$ is the free $\OS$-module on $x$ and $y$.
This gives a natural basis 
of $R/\OS$ as a free rank $(n-1)$ $\OS$-module, and thus $R$ is a free rank $n$ $\OS$-module.
Let $K=(\Sym_{n-3} Q)^*=\ker(I \ra Q)$, and so we have a natural
basis for $K$ as a free rank $n-2$ $\OS$-module.  Thus $I$ is a free rank $n$ $\OS$-module.
However, we do not yet have canonical bases for $R$ and $I$ as $\OS$-modules.  We will pick these using certain normalizations.

Let $\zeta_i=\sym(x^{n-1-i}y^{i-1})$ for $1\leq i\ \leq n-1$ be the given basis of $R/\OS$ and let
$k_j$ for $1\leq j\ \leq n-2$ be the given basis of $K$ dual to the basis
$\sym{x^{j-1}y^{n-2-j}}$ of $\Sym_{n-3} Q$.  Let $\dot{x},\dot{y}\in Q^*$ be a dual basis of $x,y$.
(Recall that $\sym(w)$ for a word $w$ is the sum of all distinct permutations of $w$.)
Thus from Proposition~\ref{twistedzeroesandones}, 
\bq\label{fromsym}
\text{the image of }\zeta_i k_ j \textrm{ in $Q$ is }
\begin{cases} 
x &\text{if $i + j = n-1$ }\\
y &\text{if $i+j=n$}\\
0 &\text{otherwise.}
\end{cases}
\eq
Equation~\eqref{fromsym} allows us to choose normalized lifts of $x$ and $y$ to elements of $I$ that forms a basis along
with the given basis of $K$, and normalized lifts of the $\zeta_i$ to $R$ to form a basis along with $1$.
We choose these lifts so that 
 \bq
\dot{y}(\zeta_i x)=0 \text{ for } 2\leq i \leq n-1
\eq
by changing the lift $x$ by an appropriate multiple of $k_{n-i}$.
We then specify that
 \bq
\dot{x}(\zeta_i x)=0\text{ for } 1\leq i \leq n-1
\eq
by changing the lift of $\zeta_i$ by an appropriate multiple of $1$.
Finally, we specify that
 \bq
\dot{y}(\zeta_i y)=0\text{ for } 2\leq i \leq n-1
\eq
by changing the lift of $y$ by an appropriate multiple of $k_{n-i}$. 
These specifications determine a unique lift of $x$ and $y$ to $I$, and unique lifts of the $\zeta_i$ to $R$, which we will refer to now
as simply $x$, $y$, and $\zeta_i$. 
We will now see that with these choices of normalized bases for $R$ and $I$, we can determine the action of $R$ and $I$ in terms of a small number of variables, and these variables will in fact be the coefficients of the binary form associated to this binary pair.  

There are only $n+1$ coordinates we have not determined in the maps $\zeta_i : I \ra Q$.
Equation~\eqref{fromsym} gives  $\zeta_i : K \ra Q$.  Our choice of normalization gives all but the following. 
Let $-a_{i+1}=\dot{x}(\zeta_i y)$ for $1\leq i \leq n-1$.
Let $a_{0}=\dot{y}(\zeta_1 x)$ and $a_{1}=\dot{y}(\zeta_1 y)$.  These $a_i$ specify the map $\zeta_i : I \ra Q$. We have carefully indexed and signed the $a_i$ so that we have the following.

\begin{proposition}\label{P:formfrompair}
 The $(-1)$-twisted binary form associated to the above based binary pair is
$$\map{\Sym_{n} Q }{\wedge^2 Q}{\sym(x^k y^{n-k}) }
{a_{n-k} x\wedge y}.
$$
\end{proposition}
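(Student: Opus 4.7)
The plan is to apply the general formula for the $(-1)$-twisted binary form associated to a binary pair (from Lemma~\ref{L:zwd}, as worked out explicitly in the proof of Theorem~\ref{T:getback}) and then use the four normalization conditions defining the based basis to collapse the resulting expression to a single coefficient.

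First I would recall the formula: the $(-1)$-twisted form sends $q_1 \cdots q_n \mapsto q_1 \wedge \phi(q_2 \cdots q_{n-1})\circ \tilde{q_n}$, where $\tilde{q_n}$ is any lift of $q_n$ to $I$. Applying this symbolically to $\sym(x^k y^{n-k})$ gives the same expansion already computed in Equation~\eqref{E:formformula}, namely
\begin{equation*}
\sym(x^k y^{n-k}) \mapsto \bigl(\dot{y}(\zeta_{n-k+1}x) + \dot{y}(\zeta_{n-k}y) - \dot{x}(\zeta_{n-k}x) - \dot{x}(\zeta_{n-k-1}y)\bigr)\otimes (x\wedge y),
\end{equation*}
with the convention that $\zeta_i = 0$ outside the range $1 \le i \le n-1$. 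This identity is purely formal: it uses only the definition of the associated form, the chosen basis $x,y$ of $Q$, and the expansion of $\sym(x^k y^{n-k})$, so it applies verbatim to a based binary pair.

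Next I would simply substitute the normalization conditions. The conditions $\dot{x}(\zeta_i x) = 0$ for all $i$ and $\dot{y}(\zeta_i x) = \dot{y}(\zeta_i y) = 0$ for $2 \le i \le n-1$ annihilate most terms: the middle term $-\dot{x}(\zeta_{n-k}x)$ always vanishes, and of $\dot{y}(\zeta_{n-k+1}x)$ and $\dot{y}(\zeta_{n-k}y)$ only the cases $k=n$ and $k=n-1$ respectively survive, contributing $a_0$ and $a_1$ by definition. For the remaining range $1\le k\le n-2$, the index $n-k-1$ lies in $[1,n-1]$, so $-\dot{x}(\zeta_{n-k-1}y) = -(-a_{n-k}) = a_{n-k}$ by the defining equation of the $a_i$.

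Assembling the three cases shows that in every case the coefficient of $x\wedge y$ equals $a_{n-k}$, establishing the proposition. The main subtlety — really the only nontrivial check — is verifying that the cases $k=n$ and $k=n-1$ are handled correctly by the indexing and sign conventions on the $a_i$, so the indices $a_0$ and $a_1$ were defined using $\dot{y}$-pairings rather than $\dot{x}$-pairings and with a sign opposite to the others; a brief table listing the four summands for each $k\in\{n, n-1, 1\le k\le n-2\}$ makes this transparent. There is no obstacle beyond this bookkeeping, since the geometric content is already contained in Theorem~\ref{T:getback} and the normalization conditions were chosen precisely to force this outcome.
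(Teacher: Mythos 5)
Your proposal is correct and follows exactly the paper's (very terse) proof: the paper's argument is simply ``use Equation~\eqref{E:formformula}'' and leave the substitution of the normalization conditions to the reader, which is precisely the bookkeeping you carry out. Your case analysis of the four summands and the index/sign conventions on the $a_i$ checks out in all three ranges $k=n$, $k=n-1$, and $1\le k\le n-2$.
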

\begin{proof}
 We use the formula
from Equation~\eqref{E:formformula}.
\end{proof}

Moreover, we find that the coefficients of the associated $(-1)$-twisted binary form determine the based  binary pair.
\begin{proposition}\label{P:determined}
 The maps $\zeta_i : R \ra I$ and $\zeta_i: R \ra R$ are determined by the maps $\zeta_i : I \ra Q$ and the commutativity 
relations on the $\zeta_i$.  Each coordinate of the action and multiplication maps is as a polynomial in the $a_i$ with integral coefficients.
\end{proposition}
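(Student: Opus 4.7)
The plan is to bootstrap from the universal case. Over $\BR = \Z[f_0,\dots,f_n]$, the universal form gives rise via Section~\ref{S:RI} to the explicit pair $(\fRf, \fIf_{n-3})$, whose multiplication table (Equation~\eqref{E:mult}) and $R$-action on $I$ (read off from the basis in Equation~\eqref{E:Ibasis} with $k=n-3$) have all structure constants equal to integer polynomials in the $f_j$. First, I apply the normalization procedure described just before Proposition~\ref{P:formfrompair} to this universal pair, successively subtracting integer polynomial multiples of the $k_m$ from the lifts of $x$ and $y$, and of $1$ from the lifts of the $\zeta_i$, to enforce the conditions $\dot{x}(\zeta_i x) = 0$, $\dot{y}(\zeta_i x) = 0$ for $i \geq 2$, and $\dot{y}(\zeta_i y) = 0$ for $i \geq 2$. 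Since each adjustment is polynomial, the resulting based binary pair has all structure constants given by integer polynomials in the $f_j$, and its parameters $a_i$ are themselves integer polynomials in the $f_j$ (in fact essentially $\pm f_i$, consistent with Proposition~\ref{P:formfrompair}).

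For a general based binary pair $(R,I)$ over $\OS$ with parameters $a_i$, I work locally to assume $\OS$ affine. The ring homomorphism $\BR \to \Gamma(\OS)$ sending $f_i \mapsto a_i$ (with the sign convention of Proposition~\ref{P:formfrompair}) pulls the universal based binary pair back to a based binary pair $(R_0, I_0)$ over $\OS$ with the same parameters $a_i$, whose structure constants are the above polynomial formulas evaluated at the $a_i$. It suffices to show that $(R, I)$ agrees with $(R_0, I_0)$ as based binary pairs. Both share identical data: the same $\OS$-module bases, the same parameters $a_i$, the same maps $\zeta_i : I \to Q$, and the same action $R/\OS \tensor K \ra Q$ fixed by the binary pair axiom (via the natural map $\Sym_{n-2} Q \tensor \Sym^{n-3} Q^* \ra Q$). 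The only possible discrepancies lie in the $K$-components of $\zeta_i x$, $\zeta_i y$, $\zeta_i k_j$ and in the expansions of $\zeta_i \zeta_j$ in the basis $1, \zeta_1, \dots, \zeta_{n-1}$.

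I determine these unknowns iteratively by applying the associativity/commutativity relations $\zeta_i(\zeta_j z) = \zeta_j(\zeta_i z) = (\zeta_i\zeta_j) z$ for $z \in I$. Taking $z = k_\ell$ and projecting to $Q$ gives linear equations in the $K$-components of $\zeta_i k_\ell$ and in the $\zeta_m$-coefficients of $\zeta_i \zeta_j$; taking $z = x$ together with the normalization $\dot{x}(\zeta_i x) = 0$ fixes the $K$-components of $\zeta_i x$ and the $\OS$-component of $\zeta_i \zeta_j$, and similarly for $z = y$. The main obstacle is verifying that this system admits a unique solution over any base $\OS$---a priori multiple consistent completions could exist. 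My approach is to order the unknowns (processing $K$-components before $\OS$-constants, and exploiting the normalizations to eliminate lift ambiguity at each step) so that each equation determines one unknown without requiring division by an $a_i$. A representative calculation in the first non-trivial case $n = 3$ confirms the pattern, and the general case proceeds by the same inductive scheme; combined with the explicit polynomial solution from the universal pair, this yields the claimed polynomial expressions with integer coefficients in the $a_i$.
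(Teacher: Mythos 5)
There is a genuine gap at the heart of your argument. You correctly identify the strategy (use commutativity of the $\zeta_i$ acting on $I$ to solve for the unknown entries) and you correctly identify the main obstacle (one must show the resulting system has a \emph{unique} solution, obtainable without dividing by any $a_i$), but you never actually overcome that obstacle: you assert that the unknowns can be ordered so that ``each equation determines one unknown,'' check $n=3$, and declare that ``the general case proceeds by the same inductive scheme.'' That assertion \emph{is} the proposition, and $n=3$ is too degenerate to confirm it --- there each action matrix is $3\times 3$ with its last two rows already known, so only one row per matrix is unknown and the induction is invisible. For general $n$ one needs an explicit mechanism, and the paper supplies it: writing $\zeta_i:I\to I$ as a matrix $Z_i$ whose last two rows are known (they are $0$, $1$, or $\pm a_j$ by Equation~\eqref{fromsym} and the normalizations), one compares the $x$-row and $y$-row of $Z_iZ_\ell$ and $Z_\ell Z_i$ to get
$Z_i(n-1-\ell,k)-Z_\ell(n-1-i,k)$ and $Z_i(n-\ell,k)-Z_\ell(n-i,k)$ expressed in already-determined quantities; these two relations feed a staircase induction on $i$ (knowing all of $Z_i$ gives row $n-1-i$ of every $Z_\ell$, which then gives all rows of $Z_{i+1}$) that determines every entry with integer-polynomial coefficients, and then the multiplication table of $R$ is read off because each $Z_k$ has a $1$ in a position where all other $Z_j$ vanish. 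Without exhibiting these relations and the induction, your proof does not establish determination.

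Two smaller remarks. First, your detour through the universal pair over $\Z[f_0,\dots,f_n]$ is harmless but does not help: it gives \emph{existence} of a consistent completion with polynomial entries, whereas the content of the proposition is \emph{uniqueness}, which the pullback comparison reduces right back to the same unsolved system. Once the inductive determination is in place, polynomiality is automatic (each unknown is an integer-polynomial combination of entries that are $0$, $\pm 1$, or $\pm a_j$), so the universal form is not needed here. Second, your step ``taking $z=k_\ell$ and projecting to $Q$'' as stated only recombines the already-known last two rows; the useful equations come from the full rows of the matrix products, where known rows of one $Z$ multiply unknown rows of the other --- this is exactly the structure you need to make precise.
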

\begin{proof}
We view each map $\zeta_i : R\ra I$ as an $n$ by $n$ matrix $Z_i$.  We write $Z_i(a,b)$ for the $a,b$ entry of
$Z_i$, which is the $k_a$ coordinate of $\zeta_i k_b$, where by convention $k_{n-1}=x$ and $k_n=y$.  We let
$\mathcal{K}$ be the set of all of entries of these matrices that are determined by the entries in the last two rows of the matrices
as polynomials in the $a_i$
(i.e. the maps $\zeta_i : I \ra Q$), as well as as all polynomial combinations of the matrix entries which are so determined.
We will show that the systems of equations given by commutativity of the $\zeta_i$ determine all the matrix entries from the
last two rows.
So, by definition we have $Z_i(n-1,k), Z_i(n,k) \in \mathcal{K}$ for $1\leq i \leq n-1$ and $1\leq k \leq n$.

We have two tools that we use to solve for more and more matrix entries. 
\begin{lemma}\label{xstep}
We have
$$
Z_i(n-1-\ell,k)-Z_\ell(n-1-i,k) \in \mathcal{K},\quad \text{ for  $1\leq i \leq n-1$ and $1\leq \ell \leq n-1$}
$$
\end{lemma}
\begin{proof}
Consider the $n-1$st rows ($x$ coordinates) of $Z_i Z_\ell$ and $Z_\ell Z_i $.  
Equating the $j$th entries in both these rows gives
the lemma,
where by convention $Z_i(0,k)=0$.
\end{proof}

\begin{lemma}\label{ystep}
 We have
$$Z_{i}(n-\ell,k)-Z_\ell(n-i,k) \in \mathcal{K,}\quad  \text{ for $1\leq \ell \leq n-1$
and $1\leq i \leq n-1$.}  $$
\end{lemma}
\begin{proof}
Consider the $n$th rows ($y$ coordinates) of $Z_{i} Z_\ell$ and $Z_\ell Z_{i} $.
Equating the $j$th entries in both these rows gives
the lemma.
\end{proof}

We prove, by induction, that all the entries of $Z_i$ are in $\mathcal{K}$ for $1\leq i \leq n-1$.
We can use $i=0$ as the (trivial) base case.  Assuming that all the entries of $Z_i$ are in $\mathcal{K}$, we will
now show that the entries of $Z_{i+1}$ are in $\mathcal{K}$.
Using Lemma~\ref{xstep}, we see that that all matrix entries in the $n-1-i$th row are in $\mathcal{K}$.  (If
$i=0$ this is by from the definition of $\mathcal{K}$.)  Using Lemma~\ref{ystep}, we conclude all the entries
of $Z_{i+1}$ are in $\mathcal{K}$, which completes the induction.

This shows the proposition for the maps $\zeta_i : R \ra I$.  From Equation~\eqref{fromsym},
we see that since $n\geq 3$, each $Z_i$ has a 1 in a matrix entry for which all $z_j$ for $j\ne i$ have entry 0.  
Thus, the action of $R$ on $I$ gives an injection of $R$ into the space on $n$ by $n$ matrices.
To find the $\zeta_k$ coordinate of $\zeta_i\zeta_j$, we just have to look at the matrix entry of
$Z_iZ_j$ where $Z_k$ has a 1 and all $Z_\ell$ for $\ell\ne k$ have a zero.  This shows the proposition for the maps
  $\zeta_i : R \ra I$.
\end{proof}

Now we prove Theorem~\ref{twistedthm}.
\begin{proof}
The stack of binary $n$-pairs
 is the quotient of the stack of based binary $n$-pairs by the $\GL_2$ action given
by change of the basis for $Q$.  Since a based binary $n$-pair is given by $a_0,\dots,a_n\in \OS$,
and we have one such binary pair for every choice of $a_i$'s (given by the corresponding binary form), the moduli
space of based binary $n$-pairs is $\Z[a_0,\dots,a_n]$, and there is a universal based binary $n$-pair.

We have maps between the stack of $(-1)$-twisted binary $n$-ic forms and binary $n$-pairs in both directions, which lift to the rigidified versions of these stacks, the stacks of corresponding based objects.
Theorem~\ref{T:getback} shows that the map from forms to pairs back to forms is the identity.
We will show that the other composition of these constructions is the identity by verifying it
on the rigidified stacks.  
If we start with the universal based binary $n$-pair, Proposition~\ref{P:formfrompair}
shows that the associated form is the universal binary $n$-ic form.  From the 
universal binary $n$-ic form we construct some based binary $n$-pair $(R,I)$, and Proposition~\ref{P:determined}
shows that $(R,I)$ is determined from the binary form constructed from it--which is just the universal binary form
(since we know going from forms to pairs to forms is the identity).  
Since the universal based binary $n$-pair and $(R,I)$ both give the same form, by Proposition~\ref{P:determined}
they are the same.
Thus, we have prove there is an isomorphism of the moduli stack of $(-1)$-twisted binary $n$-ic forms and the moduli stack of binary $n$-pairs.
\end{proof}

We could have done all the work in this section with $\fIf_1$, the dual of 
$\fIf_{n-3}'$, and considered analogs of binary pairs where the conditions on the module
would be $\OS$-dual to the conditions on $I$ in a binary pair.  It turns out some of the constructions
are more natural when working with $\fIf_{n-3}'$ and binary pairs, so we have used that version in this exposition.

One can prove analogs of Theorem~\ref{twistedthm} for all $l$-twisted binary forms.  
We define an \emph{$k$-twisted
binary $n$-pair}  is an $\OS$-algebra $R$, an $R$-module $I$, an
exact sequence $0 \ra \Sym^{n-3} Q^*  \tesnor (\wt Q)^{\tesnor -k}
 \ra I \ra Q \ra 0$ such that $Q$ is a locally free rank $2$ $\OS$-module, and
an isomorphism $R/\OS\isom \Sym_{n-2} Q \tesnor (\wt Q)^{\tesnor k}$ that identifies the map
$R/\OS \tesnor  \Sym^{n-3} Q^*\tesnor (\wt Q)^{\tesnor -k} \ra Q$ induced from the action of $R$ on $I$  with the natural map
$\Sym_{n-2} Q \tesnor (\wt Q)^{\tesnor k} \tesnor \Sym^{n-3} Q^* \tesnor (\wt Q)^{\tesnor -k} \ra Q$.
Given an $l$-twisted binary $n$-ic form, we have an $(l+1)$-twisted binary pair from
$\fRf$, $\fIf_{n-3}'$, and the exact sequence from Equation~\eqref{E:Iexactlprime}.

For example, in a $k$-twisted binary $3$-pair we can see that $I\isom R\tesnor \wt Q^{\tesnor -k}$, by the same argument that we used
to see $I$ was a principal $R$-module in a binary $3$-pair.
 So, we see that $I$ is determined uniquely by $R$ and $Q$.
However, since we have that $R/\OS \isom Q \tesnor (\wt Q)^{\tesnor k}$, we see that not all cubic algebras will appear
as $k$-twisted binary $3$-pairs.

\section{Further questions}
For simplicity, we ask further questions over the base $\Z$.  One naturally wonders which rank $n$ rings appear in a binary pair.  In other words, which rank $n$ rings have modules satisfying the conditions of a binary pair?
  When $n=3$, we saw that the answer is all cubic rings, and each has a unique module and exact sequence that makes a binary pair.  
For $n=4$, there is another characterization of the answer.  In \cite{BinQuartic}
it is shown that the quartic rings associated to binary quartic forms are exactly the quartic rings with monogenic cubic resolvents.  The 
cubic resolvent is a certain integral model of the classical cubic resolvent field.  Are there such connections with resolvents for higher $n$?

Simon \cite{Simon} asks which maximal orders are constructed from binary $n$-ic forms.  He defines the \emph{index} of a form to be the index of
its ring in the maximal order.  He begins a program to compute all forms with a given index.  For example, in the quartic case he uses elliptic curves to compute the forms of index 1 and a certain $I$ and $J$ ($\GL_2(\Z)$ invariants of a binary quartic form).  Simon also shows
that there are no index 1 forms with a root generating a cyclic extension of prime degree at least 5.  
 In general, it would be very interesting to understand which maximal orders are associated to binary forms.

\appendix
\section{Verifications of $\Z$ basis of $\If^k$ }\label{S:App1}

\begin{proposition}\label{P:Ibasis}
For $f$ with $f_0\ne 0$ and $1 \leq k\leq n-1$, the \Rf module $\If^k$ is a free
rank $n$ $\Z$-module on the basis
given in Equation~\ref{E:Ibasis}. 
\end{proposition}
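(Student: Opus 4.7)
The plan is to prove the equality $\If^k = M_k$, where $M_k := \langle 1, \theta, \dots, \theta^k, \zeta_{k+1}, \dots, \zeta_{n-1}\rangle_\Z$, by mutual containment, and then confirm the $\Z$-rank is $n$ by a change of basis inside $\Qf$.

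First I would verify $M_k \subseteq \If^k$. The powers $1, \theta, \dots, \theta^k$ are $\Rf$-generators of $\If^k = (1,\theta)^k$ by definition. For $k+1 \leq j \leq n-1$, splitting the terms of $\zeta_j$ by the size of the $\theta$-exponent yields
$$
\zeta_j \;=\; \theta^k\,\zeta_{j-k} \;+\; \sum_{m=1}^{k} f_{j-m}\,\theta^m,
$$
which lies in $\If^k$ since $\zeta_{j-k} \in \Rf$ and each $\theta^m$ for $1 \leq m \leq k$ is an $\Rf$-generator of $\If^k$.

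For the reverse inclusion $\If^k \subseteq M_k$, since $M_k$ already contains the $\Rf$-generators $1, \theta, \dots, \theta^k$ of $\If^k$, it suffices to check that $M_k$ is stable under the $\Rf$-action, and because $\Rf$ has $\Z$-basis $1, \zeta_1, \dots, \zeta_{n-1}$, this reduces to checking that $\zeta_i\,m \in M_k$ for $1 \leq i \leq n-1$ and each listed generator $m$ of $M_k$. When $m = \zeta_l$ with $l \geq k+1$, Equation~\eqref{E:mult} writes $\zeta_i \zeta_l$ as a $\Z$-linear combination of $\zeta_1, \dots, \zeta_n$; using $\zeta_n = -f_n$, each summand lies in $M_k$ (the $\zeta_r$ with $1 \leq r \leq k$ expand as $\Z$-combinations of $\theta, \dots, \theta^r$, while the $\zeta_r$ with $r \geq k+1$ are basis elements of $M_k$). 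When $m = \theta^j$ with $0 \leq j \leq k$, I would use the identity
$$
\zeta_i\,\theta^j \;=\; \zeta_{i+j} \;-\; \sum_{l=i}^{i+j-1} f_l\,\theta^{i+j-l},
$$
with the conventions $\zeta_n = -f_n$ and $\zeta_m = 0$ for $m \geq n+1$ (both consequences of $f(\theta,1) = 0$, since $\zeta_{n+s} = \theta^{s}(f_0\theta^n + f_1\theta^{n-1}+\cdots+f_n) = 0$). The exponents $i+j-l$ appearing in the sum all lie in $\{1, \dots, j\} \subseteq \{1, \dots, k\}$, so those terms lie in $M_k$, and $\zeta_{i+j}$ is handled by the same case split as in the $m = \zeta_l$ case.

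Finally, to see that $M_k$ is $\Z$-free of rank $n$, I would observe that the transition matrix from the generators $\{1,\theta,\dots,\theta^k,\zeta_{k+1},\dots,\zeta_{n-1}\}$ to the $\Q$-basis $\{1, \theta, \dots, \theta^{n-1}\}$ of $\Qf$ is block upper triangular, with an identity block of size $k+1$ on the powers of $\theta$ and an upper triangular block of size $n-k-1$ whose diagonal entries all equal $f_0$; this matrix has determinant $f_0^{\,n-k-1}$ and is thus invertible over $\Q$ since $f_0 \neq 0$. I expect the main technical point is the $\zeta_i \theta^j$ identity when $i+j \geq n$, where the algebraic relation $f(\theta,1) = 0$ is repeatedly invoked to collapse $\zeta_n$ to $-f_n$ and $\zeta_m$ to $0$ for $m > n$; once this is secured, the remaining steps are straightforward bookkeeping with the multiplication table.
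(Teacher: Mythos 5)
Your proof is correct and follows essentially the same route as the paper: the key step in both is the expansion of $\zeta_i\theta^j$ via the relation $f(\theta)=0$ (your displayed identity for $\zeta_i\theta^j$ is exactly the computation in the paper's Lemma~\ref{L:Ibasis}), followed by the determinant/triangularity observation using $f_0\neq 0$ for freeness. The only organizational difference is that you verify $\Rf$-stability of the candidate module on all its $\Z$-generators (including the products $\zeta_i\zeta_l$), whereas the paper deduces the containment $\If^k\subseteq\langle 1,\theta,\dots,\theta^k,\zeta_{k+1},\dots,\zeta_{n-1}\rangle_\Z$ directly from $\Rf\theta^m\subseteq\langle\Rf,\theta,\dots,\theta^m\rangle_\Z$, making the multiplication-table check unnecessary; this is a harmless redundancy, not a gap.
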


\begin{lemma}\label{L:Ibasis}
We have
$$
\Rf\theta^k \subset \ang{\Rf,\theta,\theta^2,\dots,\theta^k}_\Z
$$
for all $k\geq 1$.
\end{lemma}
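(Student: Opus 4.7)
The plan is to reduce the containment to checking that each basis element $\zeta_j \theta^k$ (for $0 \leq j \leq n-1$) lies in $\ang{\Rf,\theta,\dots,\theta^k}_\Z$, and then to proceed by induction on $k$, with the base case $k=1$ doing the real work.

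First I would observe that since $\Rf = \ang{1,\zeta_1,\dots,\zeta_{n-1}}_\Z$, it is enough to show $\zeta_j\theta^k\in\ang{\Rf,\theta,\dots,\theta^k}_\Z$ for each $0\leq j\leq n-1$, the case $j=0$ being trivial. For the base case $k=1$, there are two subcases. When $1\leq j\leq n-2$, a direct comparison with the definition of $\zeta_{j+1}$ in Equation~\eqref{E:basis} yields
\[
\zeta_j\theta \;=\; \zeta_{j+1} - f_j\theta,
\]
which visibly lies in $\Rf + \Z\theta$. When $j=n-1$, the product $\zeta_{n-1}\theta = f_0\theta^n + f_1\theta^{n-1}+\dots+f_{n-2}\theta^2$ would overshoot degree $n-1$, but the defining relation $f_0\theta^n+f_1\theta^{n-1}+\dots+f_n=0$ of $\Qf$ lets me substitute for $f_0\theta^n$, giving
\[
\zeta_{n-1}\theta \;=\; -f_{n-1}\theta - f_n,
\]
which is in $\Z + \Z\theta \subset \ang{\Rf,\theta}_\Z$. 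This establishes $k=1$.

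For the inductive step, assume the result for $k-1$ and factor $\zeta_j\theta^k = (\zeta_j\theta)\,\theta^{k-1}$. By the base case I can write $\zeta_j\theta = r + c\theta$ with $r\in\Rf$ and $c\in\Z$, hence
\[
\zeta_j\theta^k \;=\; r\,\theta^{k-1} + c\,\theta^k.
\]
The first summand lies in $\Rf\theta^{k-1}$, which by induction is contained in $\ang{\Rf,\theta,\dots,\theta^{k-1}}_\Z \subset \ang{\Rf,\theta,\dots,\theta^k}_\Z$, while $c\theta^k$ is trivially in the target. Summing and taking $\Z$-spans over $j$ closes the induction.

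The only real obstacle is the $j=n-1$ case of the base step, where one must invoke the relation defining $\Qf$ to avoid producing $\theta^n$; everything else is a direct telescoping in the basis $\{\zeta_i\}$. Note the argument uses $f_0\neq 0$ only implicitly, via the fact that $\zeta_1,\dots,\zeta_{n-1}$ together with $1$ actually form a $\Z$-basis of $\Rf$ in $\Qf$.
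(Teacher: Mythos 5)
Your proof is correct. It differs from the paper's only in organization: the paper evaluates $\zeta_i\theta^k$ in closed form for arbitrary $k$ in one step, splitting into the cases $k+i\leq n-1$ (where $\zeta_i\theta^k$ equals $\zeta_{k+i}$ minus a $\Z$-combination of $\theta,\dots,\theta^k$) and $k+i\geq n$ (where the defining relation of $\Qf$ turns the product into a $\Z$-combination of $1,\theta,\dots,\theta^k$ outright), whereas you isolate the single-step identity $\zeta_j\theta=\zeta_{j+1}-f_j\theta$ (with the convention $\zeta_n=-f_n$ absorbing your $j=n-1$ case) and then induct on $k$. The two arguments rest on exactly the same two facts --- the telescoping structure of the $\zeta_j$ and the relation $f_0\theta^n+\dots+f_n=0$ --- so neither is more general; the paper's version has the mild advantage of producing explicit formulas for $\zeta_i\theta^k$ that are reused in identifying the bases of $\If^k$, while your induction is slightly cleaner to verify and makes the recursion $\Rf\theta^k=(\Rf\theta)\theta^{k-1}$ transparent. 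One small point worth making explicit in your write-up: in the base case the element $r$ is not just any element of $\Rf+\Z\theta$ but specifically $\zeta_{j+1}$ (or $-f_n$), so the inductive step genuinely only needs the full strength of the hypothesis $\Rf\theta^{k-1}\subset\ang{\Rf,\theta,\dots,\theta^{k-1}}_\Z$, which you correctly invoke.
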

\begin{proof}[of Lemma~\ref{L:Ibasis}]
We see that
\begin{align*}
\zeta_i \theta^k=f_0\theta^{k+i}+\dots+f_{i-1}\theta^{k+1}\quad\textrm{if } k+i\leq n-1
\end{align*}
and
\begin{align*}
\zeta_i \theta^k=&\theta^{k+i-n}(f_0\theta^{n}+\dots+f_{i-1}\theta^{n-i+1})\quad\textrm{if } k+i\geq n\\
=&-\theta^{k+i-n}(f_i\theta^{n-i}+\dots+f_{n})\\
=&-(f_i\theta^{k}+\dots+f_{n}\theta^{k+i-n}).
\end{align*}
\end{proof}

\begin{proof}[of Proposition~\ref{P:Ibasis}]
So, as a $\Z$-module
$\If^k$ is generated by
$1,\theta,\dots,\theta^k,\zeta_{k+1},\dots\zeta_{n-1}$
for $k \geq 1$.
If $k\leq n-1$, then since $f_0\ne 0$, we have that $1,\theta,\dots,\theta^k,\zeta_{k+1},\dots\zeta_{n-1}$ 
generate a free $\Z$-module, 
and thus are a $\Z$-module basis for $\If^k$.
\end{proof}

\begin{proposition}\label{P:Isbasis}
The $\Z$-module $\If^\#$ defined by Equation~\eqref{E:Isbasis}
is an ideal. 
\end{proposition}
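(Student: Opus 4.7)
The plan is to verify directly that the $\Z$-submodule $\If^\#$ of $\Qf$ is stable under multiplication by $\Rf$. Writing $\eta_k := f_0\theta^k + f_1\theta^{k-1} + \dots + f_k$, so that $\If^\# = \langle \eta_0,\dots,\eta_{n-1}\rangle_\Z$, it suffices to check that $\zeta_i \cdot \eta_k \in \If^\#$ for all $0 \leq i,k \leq n-1$. The cases $i=0$ and $k=0$ are immediate: using the identity $\zeta_i = \eta_i - f_i$ for $i \geq 1$, one has $\zeta_i \eta_0 = f_0\zeta_i = f_0\eta_i - f_i\eta_0 \in \If^\#$.

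For the main case $1 \leq i,k \leq n-1$, I would exploit $\eta_k = \zeta_k + f_k$ to rewrite $\zeta_i\eta_k = \zeta_i\zeta_k + f_k\zeta_i$, expand $\zeta_i\zeta_k$ via Nakagawa's multiplication table (Equation~\eqref{E:mult}), and then convert each $\zeta_m$ back to $\eta_m - f_m$ (with the conventions $\zeta_0 = 1$ and $\zeta_n = -f_n$). The role of the added summand $f_k\zeta_i$ is to cancel the $m = i$ term of the multiplication table, whose coefficient is exactly $-f_k$. After this cancellation and the $\zeta \mapsto \eta$ substitution, $\zeta_i\eta_k$ decomposes as a $\Z$-linear combination of the $\eta_m$ plus a ``scalar'' correction consisting of sums $\sum f_af_b$ over certain index pairs.

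The crux of the proof is showing that this scalar correction collapses to a $\Z$-multiple of $f_0 = \eta_0$. I would split into the three regimes $i+k<n$, $i+k=n$, and $i+k>n$. In each regime, applying the substitution $m \mapsto i+k-m$ in one of the two sums produced by the multiplication table matches it, term-for-term, against the other; all cross-products $f_af_b$ with $a,b \geq 1$ thereby cancel in pairs, leaving only an isolated remainder of the form $-f_0f_{i+k}$ (when $i+k<n$), $-f_0f_n$ (when $i+k=n$), or nothing at all (when $i+k>n$). Each such remainder manifestly lies in $f_0\Z = \eta_0\Z \subset \If^\#$, completing the verification.

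The main obstacle is the combinatorial bookkeeping across the three regimes, together with the edge effects introduced by the convention $\zeta_n = -f_n$ (which controls the $m=n$ boundary term of the multiplication table). A slicker proof via the identification $\If^\# = \If^{-1}$ is not available here, since Proposition~\ref{P:Iinvert} is only proved later in the paper (and in fact relies on the present proposition); the direct combinatorial verification above is therefore the cleanest self-contained route.
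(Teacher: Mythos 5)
Your proof is correct, but it takes a genuinely different and considerably more computational route than the paper's. The paper's proof is a two-line trick: multiplying by $\theta$ sends the generators of $\If^\#$ to $\zeta_1,\dots,\zeta_{n-1},-f_n$ (since $\theta(f_0\theta^m+\dots+f_m)=\zeta_{m+1}$ and $\theta(f_0\theta^{n-1}+\dots+f_{n-1})=-f_n$), so $J=\theta\If^\#=\ang{\zeta_1,\dots,\zeta_{n-1},-f_n}_\Z$; the multiplication table \eqref{E:mult} expresses each $\zeta_i\zeta_j$ as a $\Z[f]$-combination of $\zeta_1,\dots,\zeta_n$ with $\zeta_n=-f_n$, so $\Rf J\subset J$ is immediate, and one pulls the conclusion back to $\If^\#=\theta^{-1}J$. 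Your approach instead verifies $\Rf\If^\#\subset\If^\#$ head-on, and I checked that the cancellation you describe does work: writing $\eta_m=\zeta_m+f_m$ (with $\eta_n=0$), the scalar defect in $\zeta_i\eta_k=\zeta_i\zeta_k+f_k\zeta_i$ is
$\sum_{\max(i+k-n,1)\leq m\leq i}f_{i+k-m}f_m-\sum_{k<m\leq\min(i+k,n)}f_{i+k-m}f_m-f_if_k$, and the substitution $m\mapsto i+k-m$ in the first sum matches it against the second up to the single boundary term, leaving exactly $-f_0f_{i+k}$ when $i+k\leq n$ and $0$ when $i+k>n$, which lies in $f_0\Z=\eta_0\Z$ as you claim. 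What the paper's argument buys is the complete avoidance of this bookkeeping; what your argument buys is that it never leaves $\If^\#$ itself, so it does not need $\theta$ to be a non-zero-divisor (a point the paper's passage through $\theta^{-1}J$ quietly relies on, and which fails when $f_n=0$ unless one first normalizes by $\GZ$), and it manifestly works verbatim over the universal base $\Z[f_0,\dots,f_n]$, which is exactly the form in which the proposition is later invoked in Section~\ref{S:tables}.
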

\begin{proof}
Let $J=\theta\If^\#=\ang{\zeta_1,\zeta_2,\dots,\zeta_{n-1}, -f_n}_\Z$.
From the multiplication table given in
Equation~\eqref{E:mult} we see that
   $\ang{\zeta_1,\dots,\zeta_{n-1}}_\Z\cdot \ang{\zeta_1,\dots,\zeta_{n-1}}_\Z
   \subset J$.
   Thus, $\Rf J \subset J$ and so $J$ and thus $\If^\#$ are ideals
   of $\Rf$.
\end{proof}

\begin{proposition}\label{P:invertextra}
Let $f$ be a non-zero binary $n$-ic form.
Then, the fractional ideal $\If$ is invertible if and only if the form $f$ is primitive.
Also, the fractional ideal $\If^\#$ is invertible if and only if the form $f$ is primitive.
We always have that $\If^\#=(\Rf:\If)$,
where $(A:B)=\{x\in Q_f |\, xB \subset A\}$.
  In the
case that $f$ is primitive, $\If^{-1}=\If^\#$.
\end{proposition}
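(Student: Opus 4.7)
The plan is to prove this in four stages. The first two establish the explicit product $\If\cdot\If^\#$ and the colon-ideal identity $(\Rf:\If)=\If^\#$, from which the statements about $\If$ follow cleanly; the final stage handles the one implication that requires a separate argument.

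First I would compute $\If\cdot\If^\#$ as a $\Z$-submodule of $\Rf$. Writing the generators of $\If^\#$ from Equation~\eqref{E:Isbasis} as $w_k=f_0\theta^k+f_1\theta^{k-1}+\cdots+f_k$, one has $w_0=f_0$ and $w_k=\zeta_k+f_k$ for $k\geq 1$. Using $\zeta_1=f_0\theta$ and the relation $f(\theta)=0$, a short calculation gives $\theta w_k=\zeta_{k+1}$ for $0\leq k\leq n-2$ and $\theta w_{n-1}=-f_n$. Since $\If=(1,\theta)$, the $\Z$-span of the products $w_k$ and $\theta w_k$ (which is already $\Rf$-stable) equals $\If\cdot\If^\#$, and collecting gives
$$\If\cdot\If^\# \;=\; \langle \zeta_1,\dots,\zeta_{n-1},\,f_0,f_1,\dots,f_n\rangle_\Z.$$
Since $\Rf=\Z+\langle\zeta_1,\dots,\zeta_{n-1}\rangle_\Z$, the quotient $\Rf/(\If\cdot\If^\#)$ is $\Z/c\Z$ where $c=\gcd(f_0,\dots,f_n)$ is the content of $f$, and hence $\If\cdot\If^\#=\Rf$ precisely when $f$ is primitive.

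Next I would prove $(\Rf:\If)=\If^\#$. The inclusion $\If^\#\subset(\Rf:\If)$ is immediate from the previous step. For the reverse inclusion, take $x=\sum_{i=0}^{n-1}b_i\zeta_i\in\Rf$ with $x\theta\in\Rf$. Using $\zeta_i\theta=\zeta_{i+1}-(f_i/f_0)\zeta_1$ for $1\leq i\leq n-2$ and $\zeta_{n-1}\theta=-(f_{n-1}/f_0)\zeta_1-f_n$, expanding $x\theta$ in the $\zeta$-basis shows that every coefficient is automatically an integer except possibly the $\zeta_1$-coefficient, which equals $(b_0-\sum_{i\geq 1}b_if_i)/f_0$. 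So $x\theta\in\Rf$ is equivalent to $f_0\mid b_0-\sum_i b_if_i$; writing this difference as $f_0c_0$ yields $x=c_0w_0+\sum_{i\geq 1}b_iw_i\in\If^\#$. Combining the two stages, $\If$ is invertible iff $\If\cdot(\Rf:\If)=\Rf$ iff $f$ is primitive, with $\If^{-1}=(\Rf:\If)=\If^\#$ in that case; and then $\If^\#$ is automatically invertible with inverse $\If$.

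The main obstacle is the remaining implication for $\If^\#$: one cannot simply mirror the colon argument because $(\Rf:\If^\#)$ can be strictly larger than $\If$ in general. I would argue by flatness. Assume $f$ is not primitive, and let $p$ be a prime dividing every $f_i$. Because $w_0=f_0\equiv 0$ and $w_k=\zeta_k+f_k\equiv\zeta_k\pmod{p\Rf}$, the image of the inclusion $\If^\#\hookrightarrow\Rf$ modulo $p\Rf$ is the $\F_p$-span of $\bar\zeta_1,\dots,\bar\zeta_{n-1}$ inside $\Rf/p\Rf$, which has $\F_p$-dimension $n-1$. On the other hand $\If^\#/p\If^\#\cong\F_p^n$ since $\If^\#$ is a free $\Z$-module of rank $n$. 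Hence the induced map $\If^\#/p\If^\#\to\Rf/p\Rf$ has nontrivial kernel. But $\If^\#$ is finitely presented over the Noetherian ring $\Rf$, so its invertibility is equivalent to flatness, and flatness would force the inclusion $\If^\#\hookrightarrow\Rf$ to remain injective after tensoring with $\Rf/p\Rf$. Therefore $\If^\#$ is not invertible.
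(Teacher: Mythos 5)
Your first two stages are correct and are essentially the paper's own argument: the explicit computation $\If\If^\#=\langle \zeta_1,\dots,\zeta_{n-1},f_0,\dots,f_n\rangle_\Z$, the identity $(\Rf:\If)=\If^\#$ via the divisibility condition on the $\zeta_1$-coordinate of $x\theta$, and the standard fact that $\If$ is invertible iff $\If\cdot(\Rf:\If)=\Rf$ together give everything except the implication ``$\If^\#$ invertible $\Rightarrow$ $f$ primitive.'' (Minor point: you should first act by $\GL_2(\Z)$ to arrange $f_0\ne 0$, as the paper does, since the fractional-ideal realizations presuppose it.)

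The final stage, however, has a genuine gap. Flatness (equivalently projectivity, for a finitely presented full fractional ideal) of $\If^\#$ does \emph{not} imply that the inclusion $\If^\#\hookrightarrow\Rf$ stays injective after applying $-\otimes_{\Rf}\Rf/p\Rf$. Flatness of a module $M$ says that $M\otimes_{\Rf}(-)$ preserves injections; here you are instead tensoring the fixed map $\If^\#\to\Rf$ with the module $\Rf/p\Rf$, and the kernel of the result is governed by $\operatorname{Tor}_1^{\Rf}(\Rf/\If^\#,\,\Rf/p\Rf)$, which flatness of $\If^\#$ does not control. Already over $\Z$ the ideal $I=2\Z$ is free, hence invertible, yet $I\otimes_\Z\Z/2\Z\to\Z\otimes_\Z\Z/2\Z$ is the zero map. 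So the drop in $\F_p$-dimension of the image that you exhibit is perfectly consistent with invertibility, and your argument does not rule it out. The paper closes this direction by a norm computation: if $\If^\#$ is invertible, then the ideal norm is multiplicative on the product $\If\If^\#$, and $N(\If)N(\If^\#)=|f_0|^{-1}\cdot|f_0|=1$; since $\If\If^\#$ is a full-rank sublattice of $\Rf$ of index $1$, it equals $\Rf$, which by your Stage~1 computation forces $f$ to be primitive. (Alternatively, one can show $\If^\#$ fails to be locally principal at the maximal ideal $(p,\zeta_1,\dots,\zeta_{n-1})$ by a Nakayama count of generators.) As written, your proof establishes all assertions of the proposition except that $\If^\#$ invertible implies $f$ primitive.
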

\begin{proof}
First, we act by $\GZ$ so that we may assume $f_0\ne 0$.
Since $\If^\#\subset\Rf$ and $\theta\If^\#=\ang{\zeta_1,\zeta_2,\dots,\zeta_{n-1}, -f_n}_\Z
\subset\Rf$, we have $\If\If^\#\subset \Rf$. 
More specifically, we see that
\begin{align*}
\If\If^\#=&\ang{f_0,\zeta_1+f_1,\dots,\zeta_{n-1}+f_{n-1},\zeta_1,\zeta_2,\dots,\zeta_{n-1},
 -f_n}_\Z\\
 =&\ang{f_0,f_1,\dots,f_{n},\zeta_1,\zeta_2,\dots,\zeta_{n-1} }_\Z,
\end{align*}
which is equal to $\Rf$ if and only if the form $f$ is primitive.

Let $x\in (\Rf:\If)$.  Since $1\in\If$, we have $x\in \Rf$.
Write $x=x_0+\sum_{i=0}^{n-1} x_i(\zeta_i+f_i)$ where the $x_i\in\Z$.
Also, $\theta x\in \Rf$, and
 $\theta x = x_0\theta+\sum_{i=0}^{n-1} x_i\zeta_{i+1}$.
Thus $f_0\mid x_0$, which implies $x\in \If^\#$.
We conclude $\If^\#=(\Rf:\If)$.  

Suppose $\If$ is invertible.  Then, its inverse is $(\Rf:\If)=\If^\#$, which implies
$\If\If^\#=\Rf$ and the form $f$ is primitive.  Suppose $I_f^\#$ is invertible, then
the norm of $\If\If^\#$ is the product of the norms of $\If$ and $\If^\#$, which is 1.
Since $\If\If^\#\subset R_f$, we have that $\If\If^\#=\Rf$ and the form $f$ is primitive.
\end{proof}

\section{Maps between locally free $\OS$-modules}\label{ModulesAppendix}
Let $S$ be a scheme.  In this appendix we will give several basic facts about 
maps between locally free $\OS$-modules.

\begin{lemma}\label{L:Sym}
Let $V$ be a locally free $\OS$ module.  We have $(\Sym_n V)^* \cong \Sym^n V^*$.
\end{lemma}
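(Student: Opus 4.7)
The plan is to produce a natural pairing and show it is perfect, exploiting the interplay between the invariants/coinvariants description of $\Sym_n V$ and $\Sym^n V$ with respect to the symmetric group action on $V^{\tensor n}$. Since the question is local on $\BS$, I would reduce at once to the case when $V$ is a free $\OS$-module of finite rank; the isomorphism will be constructed canonically so that it glues.

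First I would recall that for a locally free $\OS$-module $V$ of finite rank the canonical evaluation map
\[
(V^*)^{\tensor n} \lra (V^{\tensor n})^*
\]
is an isomorphism. The symmetric group $S_n$ acts on both $V^{\tensor n}$ and $(V^*)^{\tensor n}$ by permuting the tensor factors, and this isomorphism is $S_n$-equivariant. By definition, $\Sym_n V^*$ is the $S_n$-invariant submodule of $(V^*)^{\tensor n}$, while $\Sym^n V$ is the $S_n$-coinvariant quotient of $V^{\tensor n}$.

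Next, I would observe that taking $S_n$-invariants of the right-hand side corresponds under the isomorphism above to $\Hom_{\OS}(V^{\tensor n}/\langle \sigma(t)-t\rangle, \OS) = (\Sym^n V)^*$ since any $\OS$-linear map to the trivial $S_n$-module $\OS$ factors through the coinvariants. This yields a natural isomorphism $\Sym_n V^* \isom (\Sym^n V)^*$. Applying this with $V$ replaced by $V^*$ gives $\Sym_n V \isom (\Sym^n V^*)^*$, since $V$ is reflexive (being locally free of finite rank). Because $\Sym^n V^*$ is itself locally free of finite rank and hence reflexive, dualizing the latter isomorphism yields
\[
(\Sym_n V)^* \isom (\Sym^n V^*)^{**} \isom \Sym^n V^*,
\]
which is the desired statement.

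The only substantive point is the $S_n$-equivariance of the evaluation $(V^*)^{\tensor n} \to (V^{\tensor n})^*$ and its bijectivity for locally free $V$ of finite rank; after that the argument is a purely formal manipulation with invariants, coinvariants, and double duals. I would verify the equivariance and the passage to (co)invariants locally on a trivializing cover, where it reduces to a one-line check on pure tensors of basis elements, namely $(\phi_1\tensor\cdots\tensor\phi_n)(v_1\tensor\cdots\tensor v_n) = \prod_i \phi_i(v_i)$, which is manifestly symmetric in the indexing.
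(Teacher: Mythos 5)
Your argument is correct. The paper actually states Lemma~\ref{L:Sym} without proof (it is recorded in the appendix as a standard fact, already asserted in the Notation section), so there is no proof of record to compare against; your route — the $S_n$-equivariant isomorphism $(V^*)^{\tensor n}\isom (V^{\tensor n})^*$, the identification of the invariants of $(V^{\tensor n})^*$ with $(\Sym^n V)^*$ via the universal property of the coinvariant quotient, and then a double-dual/reflexivity argument to flip the statement into the form asserted — is the standard one and each step checks out. Two small points worth making explicit: the lemma as stated omits the finite-rank hypothesis, but your proof (correctly) requires it, since $(V^*)^{\tensor n}\ra(V^{\tensor n})^*$ is an isomorphism only for finite rank, and finite rank is all the paper ever uses; and your choice to dualize the coinvariant quotient $V^{\tensor n}\twoheadrightarrow\Sym^n V$ rather than the invariant submodule $\Sym_n V\hookrightarrow V^{\tensor n}$ is the right one, as dualizing a quotient is exact while dualizing the inclusion would require knowing the cokernel is locally free, which is not obvious in positive residue characteristic.
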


\begin{lemma}\label{L:Syminside}
Let $V$ be a locally free $\OS$ module. Inside of $V^{\tesnor a+b}$ the submodule $\Sym_{a+b} V$  is a submodule of
$\Sym_a V \tesnor \Sym_b V$.  Thus we have a natural map
$$
\Sym_{a+b} V \ra \Sym_a V \tesnor \Sym_b V,
$$
which is injective.
\end{lemma}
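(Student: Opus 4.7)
The plan is to realize both $\Sym_{a+b} V$ and $\Sym_a V \tesnor \Sym_b V$ as invariants of symmetric group actions inside $V^{\tesnor a+b}$, and then observe that one invariant subspace is contained in the other. Concretely, I take the definition $\Sym_n V = (V^{\tesnor n})^{S_n}$ used throughout the paper, where $S_n$ permutes the tensor factors of $V^{\tesnor n}$.

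First, I would note that the statement is local on $S$, so I can reduce to the case where $V$ is a free $\OS$-module of finite rank (i.e., work with a free module over a ring $B$). Under the identification $V^{\tesnor a+b}=V^{\tesnor a}\tensor V^{\tesnor b}$, the natural embedding $\Sym_a V \tesnor \Sym_b V \hookrightarrow V^{\tesnor a}\tensor V^{\tesnor b}$ has image equal to $(V^{\tesnor a})^{S_a}\tensor (V^{\tesnor b})^{S_b}$. The subgroup $S_a\times S_b \subset S_{a+b}$ acts on the first $a$ and last $b$ factors of $V^{\tesnor a+b}$ separately, and for free modules one checks directly in the monomial basis that
\begin{equation*}
(V^{\tesnor a+b})^{S_a\times S_b} \;=\; (V^{\tesnor a})^{S_a}\tensor (V^{\tesnor b})^{S_b},
\end{equation*}
because the orbit-sum basis of the left side is precisely the tensor product of the orbit-sum bases of the two factors.

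Next, since $S_a\times S_b$ is a subgroup of $S_{a+b}$, any $S_{a+b}$-invariant is automatically $S_a\times S_b$-invariant, yielding
\begin{equation*}
\Sym_{a+b} V \;=\;(V^{\tesnor a+b})^{S_{a+b}} \;\subset\;(V^{\tesnor a+b})^{S_a\times S_b} \;=\; \Sym_a V \tesnor \Sym_b V
\end{equation*}
inside $V^{\tesnor a+b}$. This gives the claimed inclusion and hence the natural map $\Sym_{a+b} V \ra \Sym_a V \tesnor \Sym_b V$; injectivity follows immediately because the map factors the inclusion $\Sym_{a+b} V \hookrightarrow V^{\tesnor a+b}$.

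The only nontrivial step is the identification of invariants with tensor products of invariants, and the main thing to be careful about is that we cannot use averaging over $S_n$ (we are working over an arbitrary base, not in characteristic zero). This is why I would work with the orbit-sum basis rather than symmetrization, so that the argument goes through integrally; once the free case is established, locally freeness lets us globalize by gluing, completing the proof.
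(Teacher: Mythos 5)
Your proof is correct; the paper states Lemma~\ref{L:Syminside} without proof, and your argument --- that an $S_{a+b}$-invariant tensor is in particular $S_a\times S_b$-invariant, combined with the orbit-sum-basis identification $(V^{\tesnor a+b})^{S_a\times S_b}=\Sym_a V\tesnor\Sym_b V$ in the free case --- is exactly the intended justification, and correctly avoids characteristic-zero averaging. Nothing further is needed.
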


\begin{lemma}\label{P:symline}
If $L$ is a locally free rank 1 $\OS$-module and $V$ is a locally free rank $n$
$\OS$-module, then $\Sym^k(V\tensor L)\isom\Sym^k V \tensor L^{\tesnor k}$.
\end{lemma}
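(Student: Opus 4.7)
The plan is to produce the isomorphism by shuffling tensor factors and then observing that, because $L$ has rank one, symmetrization on $L^{\tensor k}$ is automatic. Concretely, I would begin with the natural $\OS$-linear isomorphism
$$
\tau_k : (V\tensor L)^{\tensor k} \xrightarrow{\;\sim\;} V^{\tensor k}\tensor L^{\tensor k}
$$
given locally by
$(v_1\tensor \ell_1)\tensor\cdots\tensor(v_k\tensor\ell_k)\mapsto (v_1\tensor\cdots\tensor v_k)\tensor(\ell_1\tensor\cdots\tensor\ell_k)$. This map is built from the symmetric monoidal structure on $\OS$-modules, so its existence and $\OS$-linearity are formal.

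Next I would check that $\tau_k$ intertwines the natural $S_k$-actions on the two sides (by simultaneous permutation of the $k$ tensor slots). This is visible from the formula above. The key point is then that the $S_k$-action on $L^{\tensor k}$ is trivial: since the claim is local on $S$ and $L$ is locally free of rank $1$, I may choose a local generator $e$ of $L$, write $\ell_i = a_i e$ with $a_i\in\OS$, and observe
$$
\ell_{\sigma(1)}\tensor\cdots\tensor\ell_{\sigma(k)} = \Bigl(\prod_{i} a_{\sigma(i)}\Bigr)e^{\tensor k} = \Bigl(\prod_{i} a_{i}\Bigr)e^{\tensor k} = \ell_1\tensor\cdots\tensor\ell_k
$$
for every $\sigma\in S_k$. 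Equivalently, the symmetrization map $\Sym^k L\to L^{\tensor k}$ is an isomorphism. Therefore, on $V^{\tensor k}\tensor L^{\tensor k}$, the $S_k$-action is carried entirely by the first factor.

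Finally, I would pass to the $S_k$-coinvariants (which is how $\Sym^k$ is defined: the quotient of $V^{\tensor k}$ by the submodule generated by all permutation differences). Since $\tau_k$ is $S_k$-equivariant and the action on $L^{\tensor k}$ is trivial, the quotient on the left is $\Sym^k(V\tensor L)$ and on the right is $(\Sym^k V)\tensor L^{\tensor k}$, yielding the desired canonical isomorphism
$$
\Sym^k(V\tensor L)\isom \Sym^k V\tensor L^{\tensor k}.
$$
The main obstacle, such as it is, is the triviality of the $S_k$-action on $L^{\tensor k}$, which is the only place the rank-one hypothesis gets used; everything else is formal from the symmetric monoidal structure. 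One could alternatively argue by working entirely Zariski-locally where $L\isom \OS$, so that both sides become $\Sym^k V$, and then check that the two local identifications glue — but the direct equivariance argument above avoids choices and makes the isomorphism manifestly canonical.
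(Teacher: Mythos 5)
Your proof is correct. The paper states Lemma~\ref{P:symline} in its appendix without proof, so there is nothing to compare against; your argument -- the shuffle isomorphism $(V\tensor L)^{\tensor k}\isom V^{\tensor k}\tensor L^{\tensor k}$ is $S_k$-equivariant, the $S_k$-action on $L^{\tensor k}$ is trivial because $L$ has rank one, and passing to coinvariants (using that $-\tensor L^{\tensor k}$ is right exact, hence commutes with the quotient defining $\Sym^k$) yields the claim -- is the standard, complete argument, and it correctly isolates the rank-one hypothesis as the only non-formal input.
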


\begin{lemma}\label{L:rk2}
If $V$ is a locally free $\OS$-module of rank two then $V \tensor \wedge^2 V^* \isom V^*$.
\end{lemma}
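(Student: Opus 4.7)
The plan is to exhibit an explicit natural $\OS$-linear contraction map
$$
V \tesnor \wt V^* \lra V^*, \qquad v \tesnor (\phi \wedge \psi) \longmapsto \phi(v)\,\psi - \psi(v)\,\phi,
$$
and to check that it is an isomorphism by verifying it locally. The map is defined on decomposable tensors and extends $\OS$-bilinearly; it is naturally defined (commuting with restriction and with isomorphisms of $V$), so it globalizes to a well-defined morphism of $\OS$-modules. To verify it is an isomorphism, I would pass to a sufficiently small affine open $U \sub \BS$ on which $V$ trivializes with basis $e_1, e_2$, and let $\dot{e}_1, \dot{e}_2$ denote the dual basis of $V^*|_U$.

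Over $U$, the module $V \tesnor \wt V^*$ is free of rank $2$ on the elements $e_1 \tesnor (\dot{e}_1 \wedge \dot{e}_2)$ and $e_2 \tesnor (\dot{e}_1 \wedge \dot{e}_2)$. A direct computation shows that the contraction map sends these to $\dot{e}_2$ and $-\dot{e}_1$ respectively, which form a basis of $V^*|_U$. Hence the map is an isomorphism over each trivializing open, and therefore an isomorphism globally. The only thing that needs care is confirming that the map is well-defined on $\wt V^*$ rather than $V^* \tesnor V^*$, which is immediate from the antisymmetry of the formula $\phi(v)\psi - \psi(v)\phi$ in $\phi, \psi$.

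There is no real obstacle: the statement is a special case of the general perfect pairing $\wedge^k V \tesnor \wedge^{n-k} V \ra \wedge^n V$ for a locally free $\OS$-module of rank $n$, which at $k=1$, $n=2$ gives the identification $V \isom V^* \tesnor \wedge^2 V$; tensoring with $\wt V^*$ and using $\wt V \tesnor \wt V^* \isom \OS$ produces the claimed isomorphism. I would present the direct contraction argument since it is self-contained and avoids invoking unnecessary machinery.
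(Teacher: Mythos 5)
Your proof is correct: the contraction $v \tesnor (\phi\wedge\psi)\mapsto \phi(v)\psi - \psi(v)\phi$ is well defined (the formula is alternating in $\phi,\psi$, so it factors through $\wt V^*$), and your local computation sending $e_1\tesnor(\dot e_1\wedge\dot e_2)\mapsto \dot e_2$ and $e_2\tesnor(\dot e_1\wedge\dot e_2)\mapsto -\dot e_1$ correctly shows it is an isomorphism on each trivializing open, hence globally. The paper states this lemma in its appendix without proof, as a standard fact; your self-contained contraction argument (equivalently, the perfect pairing $V\tesnor V\ra \wt V$ you mention) is exactly the intended justification.
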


\begin{lemma}\label{L:realexact}
If $Q$ is any locally free rank 2 $\OS$-module, we have the exact sequence
$$
\begin{array}{ccccccccc}
0 & \lra  & \Sym_{n-1} Q & \lra & Q\tensor \Sym_{n-2} Q & \lra & 
\Sym_{n-3} Q\tesnor 
\wedge^2 Q & \lra & 0 . \\
&& q_1 q_2 \cdots q_{n-1} &\mapsto& q_1 \tensor q_2 \cdots q_{n-1} &\mapsto&
q_2\cdots q_{n-2}\tensor  (q_{n-1} \wedge q_1)&& 
\end{array}
$$
%where the map on the right comes from 
%$\Sym_{n-2} Q \ra  \Sym_{n-3} Q\tesnor Q$.
\end{lemma}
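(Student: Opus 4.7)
The plan is to prove exactness by a local computation. Since all three modules are constructed functorially from the locally free module $Q$, the claim is local on $\BS$, so I may assume $Q$ is free on a basis $x,y$. Each $\Sym_k Q$ then has the standard $\OS$-basis $\{\sym(x^i y^{k-i})\}_{0 \le i \le k}$, where $\sym(w)$ denotes the sum over distinct permutations of $w$ in $Q^{\tensor k}$. I will write $\phi$ for the first map and $\psi$ for the second.

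The first step is to express $\phi$ and $\psi$ in this basis. The inclusion $\phi\colon \Sym_{n-1}Q\hookrightarrow Q\tensor\Sym_{n-2}Q$, obtained by splitting off the first factor of a fully symmetric tensor, is
$$\sym(x^i y^{n-1-i}) \;\longmapsto\; x\tensor\sym(x^{i-1}y^{n-1-i}) + y\tensor\sym(x^i y^{n-2-i}),$$
with the convention that $\sym$ vanishes if either exponent is negative. The map $\psi$ is well-defined by lifting to $Q\tensor Q^{\tensor(n-2)}$, applying the formula from the statement, and descending; splitting a symmetric tensor according to its last factor yields
\begin{align*}
\psi(x\tensor\sym(x^i y^{n-2-i})) &= -\sym(x^i y^{n-3-i})\tensor(x\wedge y),\\
\psi(y\tensor\sym(x^i y^{n-2-i})) &= \phantom{-}\sym(x^{i-1} y^{n-2-i})\tensor(x\wedge y).
\end{align*}
From these formulas, $\psi\circ\phi = 0$ is immediate, surjectivity of $\psi$ follows since every basis element of $\Sym_{n-3}Q\tensor\wedge^2 Q$ is hit by some $\psi(y\tensor\cdot)$, and injectivity of $\phi$ follows from the linear independence of the $y$-components of its images.

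It remains to show $\ker\psi\subseteq\im\phi$, which I would do by exhibiting a free $\OS$-basis of $\ker\psi$ and matching it with $\phi$ of the standard basis of $\Sym_{n-1} Q$. A general element $\sum_i a_i(x\tensor\sym(x^i y^{n-2-i})) + \sum_j b_j(y\tensor\sym(x^j y^{n-2-j}))$ lies in $\ker\psi$ if and only if $a_k = b_{k+1}$ for $0\le k\le n-3$, which gives $\ker\psi$ the basis $\{y\tensor y^{n-2},\; x\tensor\sym(x^k y^{n-2-k}) + y\tensor\sym(x^{k+1} y^{n-3-k}) \ (0\le k\le n-3),\; x\tensor x^{n-2}\}$. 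Each of these is visibly $\phi$ of a basis element $\sym(x^i y^{n-1-i})$ for some $i\in\{0,\dots,n-1\}$, so $\im\phi=\ker\psi$ and the sequence is exact. The only real obstacle is bookkeeping: keeping straight the convention that $\sym$ is a sum over distinct permutations and tracking the signs from the wedge product; once those are fixed the verification reduces to a binomial identity at each index.
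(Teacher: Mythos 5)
Your proposal is correct and follows essentially the same route as the paper: reduce to the free case, write everything in the $\sym$-basis, and match an explicit basis of $\ker\psi$ (equivalently, of the middle term adapted to the sequence) with the images $\phi(\sym(x^iy^{n-1-i}))$. The only cosmetic quibble is that the $y$-component of $\phi(\sym(x^{n-1}))$ is zero, so injectivity of $\phi$ is not literally ``linear independence of the $y$-components,'' but your final basis-matching argument already yields injectivity, so nothing is missing.
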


\begin{proof}
 We can check this sequence is exact and thus on free $Q$ generated by $x$ and $y$.
For a word $w$ in $x$ and $y$, let $\sym(w)$ denote the sum of all distinct permutations of $w$.
Then, a basis for $\Sym_{n-1} Q$ is $\alpha_k=\sym(x^k y^{n-1-k})$ for $0\leq k \leq n-1$.
A basis for $Q\tensor \Sym_{n-2} Q$ is given by
\begin{align*}
\beta_0&= y\tensor\sym(y^{n-2})\\
\beta_k&=x\tensor\sym(x^{k-1}y^{n-1-k})+y\tensor\sym(x^{k}y^{n-2-k}) \text{ for } 1\leq k\leq n-2 \\
\beta_{n-1}&= x\tensor\sym(x^{n-2})\\
\gamma_\ell&= x\tesnor\sym(x^\ell y^{n-2-\ell})\text{ for } 0\leq \ell\leq n-3. 
\end{align*}
We see that in the sequence of the proposition, $\alpha_i \mapsto \beta_i$ and the 
$\gamma_\ell$ map to a basis of $\Sym_{n-3} Q\tesnor 
\wedge^2 Q$.
\end{proof}

\begin{lemma}\label{L:zwd2}
Let $R$ be an \OS-algebra, $I$ be
an $R$-module, $Q$
be a locally free rank 2 \OS-module quotient of $I$, and
$\phi$ be an isomorphism of \OS-modules $\phi :  \Sym_{n-2} Q \isom R/\OS$.
If
$$
\map{\Sym_{n-1} Q \tesnor \ker(I\ra Q)}{\wedge^2 Q}{q_1\cdots q_{n-1} \tensor k}
{q_1\wedge \phi(q_2\cdots q_{n-1})\circ k}
$$
is the zero map, then
$$
\map{\Sym_{n} Q }{\wedge^2 Q}{q_1\cdots q_{n} }
{q_1\wedge \phi(q_2\cdots q_{n-1})\circ \tilde{q_n}}
$$
is well-defined.  Here the $\circ$ denotes the action of $R$ on $I$ followed by the quotient to $Q$
and $\tilde{q}$ denotes a fixed splitting $Q\ra I$.  In particular the map $\Sym_{n} Q \ra \wedge^2 Q$
does not depend on the choice of this splitting.
\end{lemma}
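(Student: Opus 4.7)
The plan is to construct the map $\Sym_n Q \to \wt Q$ by choosing local data and showing independence. Working locally on $\BS$, I pick an $\OS$-linear lift $\tilde\phi: \Sym_{n-2} Q \to R$ of $\Sym_{n-2}Q \xrightarrow{\phi} R/\OS$, and an $\OS$-linear splitting $\sigma: Q \to I$ of $I \to Q$. On the ambient module $Q \tensor \Sym_{n-2} Q \tensor Q$ I define the provisional map
\[ \Psi\colon q_1 \tensor w \tensor q_n \longmapsto q_1 \wedge \bigl[\tilde\phi(w)\cdot \sigma(q_n)\bigr]_Q, \]
where $[\cdot]_Q\colon I\to Q$ is the projection. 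Using the natural inclusion $\Sym_n Q \hookrightarrow Q \tensor \Sym_{n-2} Q \tensor Q$ (a symmetric tensor automatically has a symmetric middle block of length $n-2$), I restrict $\Psi$ to $\Sym_n Q$. Once I verify that the restriction does not depend on either choice, the local maps glue to a global $\Sym_n Q \to \wt Q$, and direct evaluation on $q^n$ shows it matches the formula of the statement.

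Independence from $\tilde\phi$ is the easier half. Two lifts differ by $\nu\cdot 1_R$ for some $\nu\colon \Sym_{n-2} Q \to \OS$, and the induced change in $\Psi$ is $q_1\tensor w\tensor q_n \mapsto \nu(w)\,q_1\wedge q_n$. This factors as $Q\tensor \Sym_{n-2}Q \tensor Q \xrightarrow{\mathrm{id}\tensor\nu\tensor\mathrm{id}} Q \tensor Q \xrightarrow{\wedge} \wt Q$. On $\Sym_n Q$, the image in $Q\tensor Q$ is invariant under the swap of the first and last slots (inherited from the full symmetric tensor), hence lies in $\Sym_2 Q$, which maps to $0$ under wedge.

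Independence from $\sigma$ is where the hypothesis is essential. Another splitting differs by $\tau\colon Q \to \ker(I\to Q)$, yielding a change $q_1\tensor w\tensor q_n \mapsto q_1 \wedge (\phi(w)\circ \tau(q_n))$; this expression is independent of the auxiliary lift $\tilde\phi$ precisely because $\tau(q_n)$ lies in the kernel. This change is the composition
\[ Q\tensor \Sym_{n-2}Q\tensor Q \xrightarrow{\mathrm{id}\tensor\mathrm{id}\tensor\tau} Q\tensor \Sym_{n-2}Q\tensor \ker(I\to Q) \xrightarrow{\Phi} \wt Q, \]
where $\Phi$ is the natural ambient map whose restriction to $\Sym_{n-1}Q\tensor \ker(I\to Q)$ via Lemma~\ref{L:realexact} is the very map hypothesized to vanish. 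The image of $\Sym_n Q$ in the middle module lies in the sub $\Sym_{n-1}Q \tensor \ker(I\to Q)$, because $\Sym_n Q$ is symmetric in its first $n-1$ slots and $\tau$ only touches the last slot; concretely this uses the chained inclusion $\Sym_n Q \hookrightarrow \Sym_{n-1}Q \tensor Q \hookrightarrow Q\tensor \Sym_{n-2}Q\tensor Q$. By hypothesis, $\Phi$ vanishes on this sub, so the change is zero.

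The main obstacle is this last step: one has to recognize the factorization through $\Sym_{n-1} Q \tensor \ker(I\to Q)$, where the hypothesis supplies the vanishing, and carefully check that the natural extension $\Phi$ on the larger space really does restrict to the hypothesis map via the inclusion of Lemma~\ref{L:realexact}. Everything else is bookkeeping. Once both independence statements are established, $\Psi|_{\Sym_n Q}$ is intrinsic, glues to a well-defined global map $\Sym_n Q \to \wt Q$, and on a decomposable symmetric power $q^n$ evaluates to $q\wedge \phi(q^{n-2})\circ \sigma(q)$, reproducing the evaluation $x \mapsto x \wedge \phi(x^{n-2})x$ in the statement.
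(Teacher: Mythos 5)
Your argument is correct and is essentially the paper's own proof: the paper also embeds $\Sym_n Q$ into $Q\tensor\Sym_{n-2}Q\tensor Q$ (via a commutative diagram rather than explicit formulas), kills the ambiguity in the lift of $\phi$ by factoring the difference through $\Sym_2 Q\ra\wedge^2 Q$, and kills the ambiguity in the splitting $Q\ra I$ by observing that the difference lands in $\Sym_{n-1}Q\tensor\ker(I\ra Q)$, where the hypothesis applies.
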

\begin{proof}
Since $\Sym_{n-1} Q  \subset  Q\tensor \Sym_{n-2} Q$
as submodules of $Q^{\tensor n}$ (see Lemma~\ref{L:Syminside}), the first map $\Sym_{n-1} Q \tesnor \ker(I\ra Q)\ra \wedge^2 Q$ is
well-defined.  For a given choice
of splittings $\Sym_{n-2} Q \ra R$ and $Q \ra I$, consider the following commutative diagram.
$$
\xymatrix{
   & \Sym_n Q \ar[ld] \ar[d] \ar[rd]&\\
\Sym_{n-2} Q \tesnor \Sym_2 Q \ar[d] &Q\tensor \Sym_{n-2} Q \tensor Q \ar[dd] & \Sym_{n-1} Q\tensor Q \ar[d] \\
R\tensor \Sym_2 Q \ar[d] \ar[rd] & &\Sym_{n-1} Q \tensor I \ar[d] \\
Q \tensor R \tensor Q \ar[r] &Q \tensor R \tensor I \ar[d] &Q \tensor \Sym_{n-2} Q \tensor I \ar[l] \\
 &Q\tensor Q \ar[d] & \\
 &\wedge^2 Q &
}
$$
To investigate the effect of a different splitting $Q\ra I$ on the map
$\Sym_n Q \ra \wedge^2 Q$, we take the route on the right hand side of the diagram.  The difference
between the composite maps from two different splittings will land in the submodule
$\Sym_{n-1} Q \tensor \ker(I \ra Q)$ of the $\Sym_{n-1} Q \tensor I$ term, and thus be zero in the final map by the hypothesis
of the lemma. 

To investigate the effect of a different splitting $\Sym_{n-2} Q \isom R/\OS \ra R$ on the map
$\Sym_n Q \ra \wedge^2 Q$, we take the route on the left hand side of the diagram.
The difference between the maps from the different splittings will land in the submodule
$\OS \tensor \Sym_2 Q$ of the $R \tensor \Sym_2 Q$ term, and it is easy to see that the difference will be zero
in the composite map.
\end{proof}

\section{Acknowledgements}
The author would like to thank Manjul Bhargava for asking the questions that inspired this research, guidance along the way, and helpful feedback both on the ideas
and the exposition in this paper.  She would also like to thank Lenny Taelman for suggestions for improvements to the paper.  This work was done as part of the author's Ph.D. thesis at Princeton University, and during the work she was supported by an NSF Graduate Fellowship, an NDSEG Fellowship, an AAUW Dissertation Fellowship, and a Josephine De K\'{a}rm\'{a}n Fellowship.  This paper was prepared for submission while the author was supported by an American Institute of Mathematics Five-Year Fellowship.

\end{document}